\DeclareMathAlphabet{\mathsfsl}{OT1}{cmss}{m}{sl}
\newcommand{\PreserveBackslash}[1]{\let\temp=\\#1\let\\=\temp}
\newcolumntype{C}[1]{>{\PreserveBackslash\centering}p{#1}}
\newcolumntype{R}[1]{>{\PreserveBackslash\raggedleft}p{#1}}
\newcolumntype{L}[1]{>{\PreserveBackslash\raggedright}p{#1}}
\numberwithin{equation}{section}
\newtheorem{thm}{Theorem}[section]
\newtheorem{lem}[thm]{Lemma}
\theoremstyle{definition}
\newtheorem{theorem}{Theorem}
\newcommand{\horizon}{\delta}
\newcommand{\mcL}{\mathcal{L}}
\newcommand{\mcO}{\mathcal{O}}
\def\omg{{\Omega}}
\def \Fb{\mathbf{F}}
\def \Ub{\mathbf{U}}
\def \xb{\mathbf{x}}
\def \nb{\mathbf{n}}
\begin{document}

\theoremstyle{definition}
\newtheorem{remark}{Remark}

\newcommand{\vertiii}[1]{{\left\vert\left\vert\left\vert #1
    \right\vert\right\vert\right\vert}}

\begin{frontmatter}

\title{An Asymptotically Compatible Formulation for Local-to-Nonlocal Coupling Problems without Overlapping Regions}

\address[yy]{Department of Mathematics, Lehigh University, Bethlehem, PA 18015, USA}
\address[dk]{Department of Mechanical and Aerospace Engineering, The University of California, San Diego, CA, 92093}

\author[yy]{Huaiqian You}
\author[yy]{Yue Yu\corref{cor1}}\ead{yuy214@lehigh.edu}
\author[dk]{David Kamensky}

\begin{abstract}
In this paper we design and analyze an explicit partitioned procedure for a 2D dynamic local-to-nonlocal (LtN) coupling problem, based on a new nonlocal Robin-type transmission condition. The nonlocal subproblem is modeled by the nonlocal heat equation with a finite horizon parameter $\delta$ characterizing the range of nonlocal interactions, and the local subproblem is described by the classical heat equation. We consider a heterogeneous system where the local and nonlocal subproblems present different physical properties, and employ no overlapping region between the two subdomains. 
We first propose a new generalization of classical local Neumann-type condition by converting the local flux to a correction term in the nonlocal model, and show that the proposed Neumann-type boundary formulation recovers the local case as $\mcO(\delta^2)$ in the $L^{\infty}$ norm. We then extend the nonlocal Neumann-type boundary condition to a Robin-type boundary condition, and develop a local-to-nonlocal coupling formulation with Robin-Dirichlet transmission conditions. To stabilize the explicit coupling procedure and to achieve asymptotic compatibility, the choice of the coefficient in the Robin condition is obtained via amplification factor analysis for the discretized system with coarse grids. Employing a high-order meshfree discretization method in the nonlocal solver and a linear finite element method in the local solver, the selection of optimal Robin coefficients are verified with numerical experiments on heterogeneous and complicated domains. With the developed optimal coupling strategy, we numerically demonstrate the coupling framework's asymptotic convergence to the local limit with an $\mcO(\delta)=\mcO(h)$ rate, when there is a fixed ratio between the horizon size $\delta$ and the spatial discretization size $h$.
\end{abstract}

\begin{keyword}
nonlocal heat equation, asymptotic compatibility, Robin condition, explicit coupling strategy, heterogeneous system
\end{keyword}

\end{frontmatter}

\tableofcontents


\section{Introduction}

In the last decades, there has been an increasing interest in the simulation of nonlocal integro-differential equations (IDEs) such as nonlocal diffusion and peridynamics\cite{silling_2000,bazant2002nonlocal,zimmermann2005continuum,emmrich2007analysis,emmrich2007well,zhou2010mathematical, du2011mathematical,podlubny1998fractional,mainardi2010fractional,magin2006fractional,burch2011classical,du2014nonlocal,defterli2015fractional,lischke2018fractional,du2014peridynamics,antoine2005approximation,dayal2007real,sachs2013priori,bucur2016nonlocal}, since they can describe phenomena not well represented by classical Partial Differential Equations (PDEs). The nonlocal models with integral operators in space allow for the description of long-range interactions and reduce the regularity requirements on problem solutions, and therefore provide exceptional simulation fidelity for a broad spectrum of applications such as fracture mechanics, anomalous subsurface transport, phase transitions, image processing, multiscale and multiphysics systems, magnetohydrodynamics, and stochastic processes. 

However, despite the nonlocal IDEs' improved accuracy, the usability of nonlocal models could be hindered by several modeling and numerical challenges such as the unconventional prescription of nonlocal boundary conditions, the calibration of nonlocal model parameters and the expensive computational cost. Moreover, in real-world applications nonlocal effects are often concentrated only in some parts of the domain, and in the remaining parts the system can be accurately described by a PDE. Thus, local-to-nonlocal coupling strategies are required such that the resultant coupling framework can support the nonlocal model near the regions where the nonlocal interaction occurs as well as the efficient classical PDE model employed for the other parts. In recent years, many strategies have been proposed to couple local-to-nonlocal or two nonlocal models with different nonlocality\cite{seleson2013interface,azdoud2013morphing,han2012coupling,prudhomme2008computational,d2014optimal,du2017quasinonlocal,li2016quasinonlocal,d2016coupling,lubineau2012morphing,seleson2015concurrent,askari2008peridynamics,TTD19,silling2015variable,RW_Macek2007,E_Oterkus2010,A_Agwai2009,W_Liu2012,Lee2016,Han2016336,Galvanetto201641}. Just to name a few, examples include the optimal-control based coupling method \cite{d2014optimal,d2016coupling}, the overlapping partitioned procedure with Robin conditions \cite{yu2018partitioned}, the Arlequin method \cite{prudhomme2008computational,han2012coupling}, 
the Morphing approach \cite{lubineau2012morphing,azdoud2013morphing,Han2016336}, the quasi-nonlocal coupling method \cite{li2016quasinonlocal,DuLiLuTian2018}, the force-based blending method \cite{seleson2013force,seleson2015concurrent}, the splice method \cite{silling2015variable}, the varying horizon method \cite{bobaru2011adaptive,bobaru2009convergence,seleson2013interface,ren2016dual,seleson2010peridynamic,silling2015variable,TTD19,TD17trace}, the submodeling approach \cite{RW_Macek2007,A_Agwai2009,E_Oterkus2010}, and so on. %
However, most of the above local-to-nonlocal coupling approaches focus on the scenario where the local and nonlocal models are physically consistent, i.e., when the nonlocal interaction range $\delta$ shrinks, the nonlocal model converges to the local model, and there is no jump of the physical properties across the local-nonlocal interface. To the authors' best knowledge, there is very little work on dynamic local-to-nonlocal coupling approaches for heterogeneous domains where the local and nonlocal regions present dramatically different physical properties, although those approaches are required for applications with both nonlocal effects and multiscale/multiphysics dynamics. 

Therefore, we aim to develop a dynamic local-to-nonlocal coupling method based on an explicit coupling partitioned procedure with transmission conditions applied on the sharp interface, so that the method is capable of handling the physical property jumps across the interface. For concreteness, in this paper we focus on coupling the nonlocal heat equation with the classical heat equation, although the proposed technique is applicable to more general problems. The numerical approximation of this type of heterogeneous system is challenging, due to {potential numerical instabilities in coupling schemes for domain-decomposition problems} 
and the nonlocal effects involved. Specifically, the local-to-nonlocal coupling method for heterogeneous systems presents both modeling and numerical difficulties/desired properties:
\begin{itemize}
    \item To apply the transmission condition on the nonlocal side, a nonlocal boundary condition on the sharp interface is required. However, in general nonlocal boundary conditions must be defined on a layer surrounding the domain. Therefore, new definitions of the nonlocal boundary conditions are required when only the surface data are available at the sharp local-to-nonlocal interface.
    \item A key feature in the discretization of nonlocal models has been the concept of \textit{asymptotic compatibility} \cite{tian2014asymptotically}, meaning that the nonlocal discretization has to recover a corresponding local model as both the nonlocal interaction range $\delta$ and the characteristic discretization lengthscale are reduced at the same rate. To ensure that the local-to-nonlocal coupling model recovers a well-understood classical limit, it is advocated that the developed coupling framework should also preserve asymptotic compatibility.
        \item In explicit coupling partitioned procedures, both the local and nonlocal subproblems are solved only once per time step and do not satisfy exactly the coupling transmission conditions. As a consequence, the work exchanged between the two subproblems is not perfectly balanced and this may induce instabilities in the coupling scheme. Therefore, stabilization strategies are required to develop a robust explicit coupling method with partitioned procedure.
\end{itemize}

In this paper, we address the above three difficulties with three steps. Firstly, to resolve the modeling difficulty of defining the nonlocal transmission condition we introduce a nonlocal boundary treatment that is designed to convert the local Neumann-type boundary conditions defined on sharp surfaces into nonlocal volume constraints in the nonlocal model, and rigorously prove that this nonlocal boundary value problem recovers the desired local Neumann problem with an optimal $\mcO(\delta^2)$ rate as the nonlocal interaction range $\delta\rightarrow 0$. Based on the nonlocal Neumann-type boundary condition, we further develop the nonlocal Robin-type boundary condition on a sharp surface.  Although there are several previous attempts to tackle the conversion of surface data and nonlocal volume constraints (see, e.g., \cite{tao2017nonlocal,DElia2018-P,yu2018partitioned,DuZhangZheng,cortazar2007boundary,cortazar2008approximate,d2019physically}), to the authors' best knowledge the proposed formulation has for the first time provided a Robin-type boundary condition for nonlocal problems and obtained the optimal second order asymptotic convergence to the local limit. Secondly, to ensure the asymptotic compatibility of the nonlocal solver, based on the new formulation for nonlocal boundary condition we develop an asymptotically compatible meshfree discretization method with the generalized moving least squares (GMLS) approximation framework \cite{Trask2018paper,You2018}. In the last part of the paper, we investigate a stabilization strategy for coupling local and nonlocal heat equations. In classical domain-decomposition problems, the Robin transmission condition, which is a linear combination of the Dirichlet and Neumann transmission conditions, has been proven to be very efficient in enhancing the coupling stability in explicit partitioned procedures (see, e.g., \cite{badia2008fluid,chen2011parallel,discacciati2007robin,douglas1997accelerated}). Therefore, to resolve the last difficulty we propose an explicit partitioned procedure based on the developed Robin-type boundary condition applied on the sharp local-to-nonlocal interface, improving upon the implicit partitioned procedure with overlapping regions found in the literature \cite{yu2018partitioned}. In the nonlocal subdomain, the proposed Robin-type transmission condition is applied on the interface and the nonlocal heat equation is discretized with the meshfree discretization method. In the local subdomain, classical Dirichlet transmission condition is applied, while the classical heat model is discretized with finite elements. To investigate the optimal coupling strategy for this partitioned coupling framework, we develop stability analysis in general geometries for predicting the values of the optimal Robin coefficients numerically. 

The paper is organized as follows. We first present in Section \ref{sec:models} governing equations of nonlocal and local models and the discretization methods, respectively. In Section \ref{section:robin} a nonlocal Robin-type boundary condition based on sharp surface data is proposed: we firstly develop a nonlocal Neumann-type boundary condition and provide a consistency result for the resultant nonlocal boundary value problem in Section \ref{sec:neumann}, then generalize the nonlocal Neumann-type boundary condition to a nonlocal Robin-type boundary condition in Section \ref{sec:robinmath}. The consistency of the proposed Robin-type boundary condition is then numerically verified in Section \ref{sec:Robintest} where the optimal $O(\delta^2)$ convergence to the local limit is obtained. With the developed nonlocal Robin condition, the coupling procedure is detailed in Section \ref{sec:couple}. For the full partitioned algorithm presented in Section \ref{41}, in Section \ref{sec:Robincoef} we present stability analysis for the fully discretized problem and develop a numerical approach to approximate the optimal Robin coefficient. In Section \ref{42} we then demonstrate the performance of this coupling framework and verify the optimal coefficient with convergence and patch tests. To investigate the capability of this coupling framework on more complicated scenarios, we also test the flexibility of this method for problems with different domain settings. Section \ref{sec:conclusion} summarizes our findings and discusses future research.

\section{Preliminaries on Local and Nonlocal Models}\label{sec:models}

\begin{figure}[!htb]\centering
 \subfigure{\includegraphics[width=0.35\textwidth]{./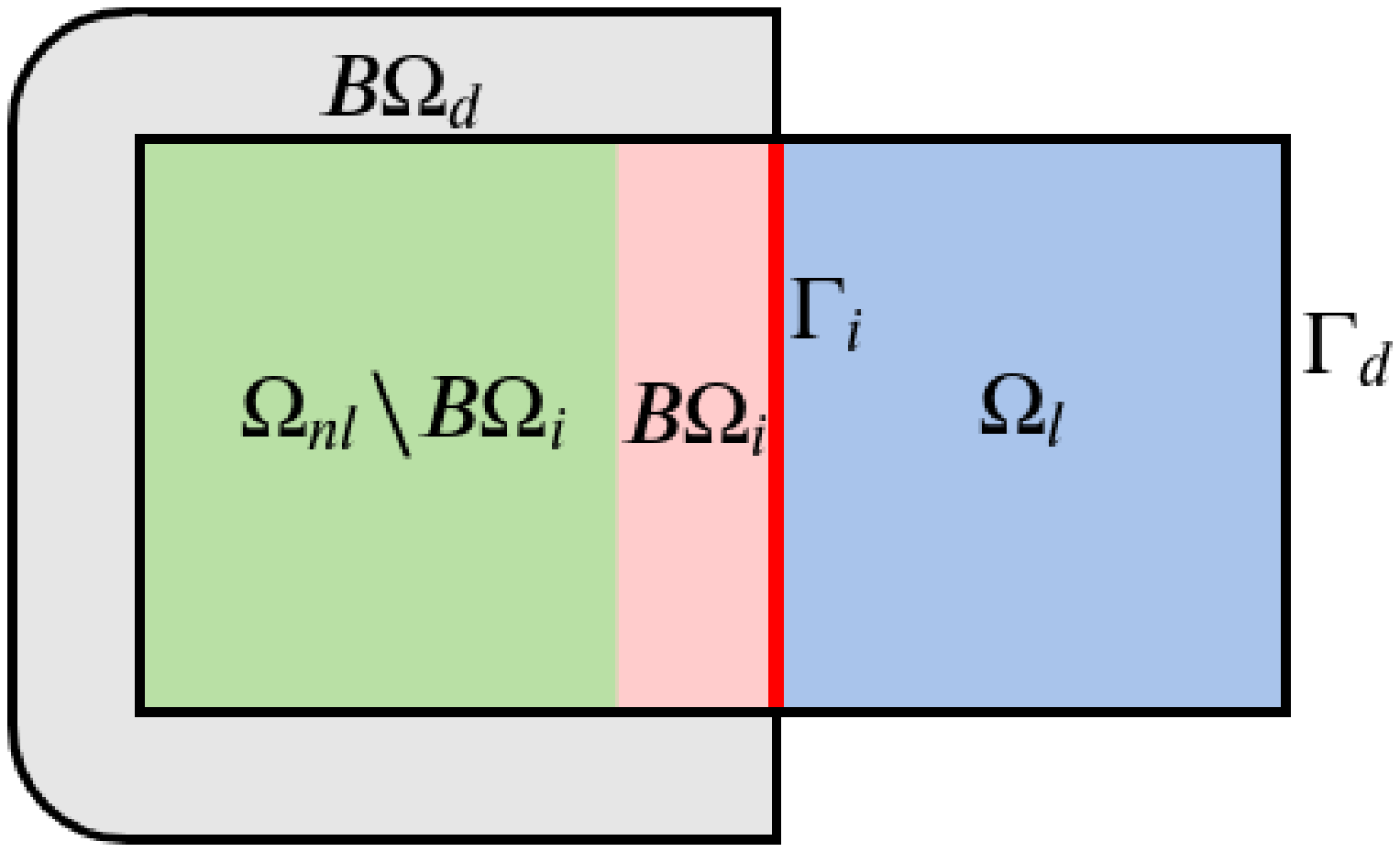}}$\qquad\qquad$
  \subfigure{\includegraphics[width=0.22\textwidth]{./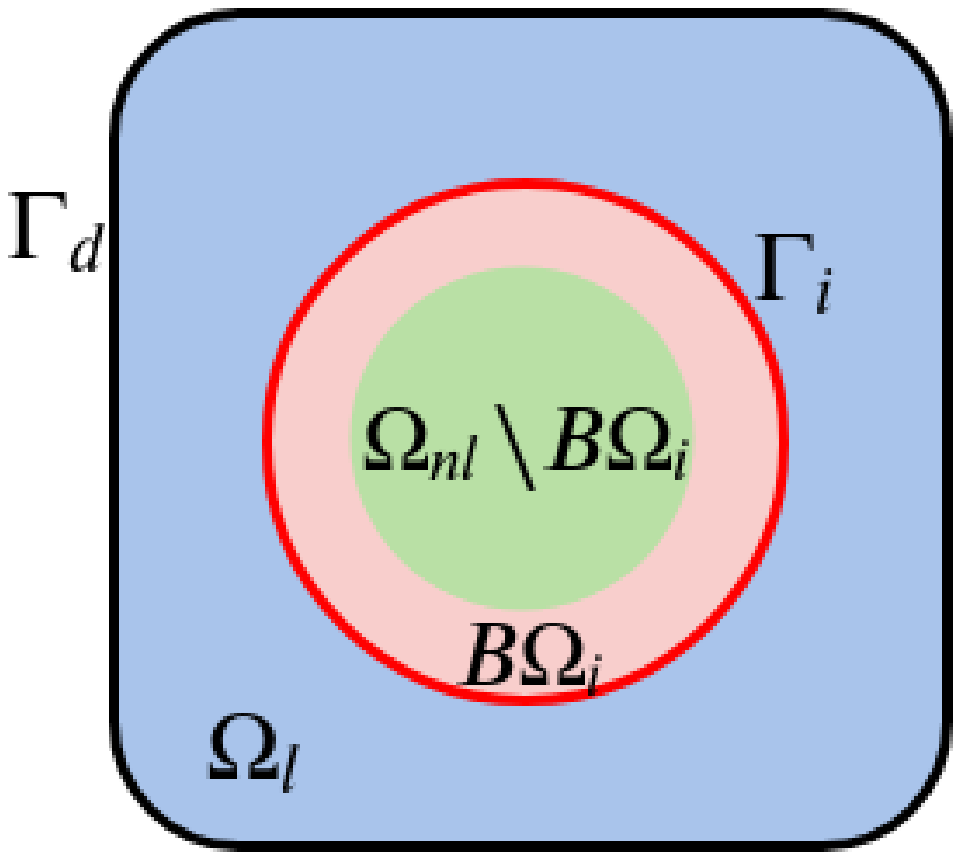}}
 \caption{Notations for representative domain decomposition settings. The nonlocal subdomain $\Omega_{nl}$ is represented by the green and red regions together, local subdomain $\Omega_l$ is represented in blue, the sharp interface $\Gamma_i$ is highlighted by red. Nonlocal Dirichlet boundary condition is given on $B\Omega_d$, and nonlocal Robin transmission condition is applied on the red region $B\Omega_i$. On the local subdomain, Dirichlet boundary condition is given on $\Gamma_d$ and Dirichlet transmission condition is applied on $\Gamma_i$. 
 }
 \label{fig1}
\end{figure}

In this section, we define the formulation for the solution $u(\mathbf{x})$ in a two-dimensional body occupying the domain $\Omega\subset\mathbb{R}^2$. The domain $\Omega$ is composed of two parts: the nonlocal subdomain $\Omega_{nl}$ (as shown by the green and red regions in Figure \ref{fig1}) where the problem is described by a nonlocal model based on integro-differential equations, and the classical theory subdomain $\Omega_l$ (as shown by the blue region in Figure \ref{fig1}) occupied by a local model described by classical PDEs. To develop a nonoverlapping coupling framework, for both the local and nonlocal models the interface boundary conditions are applied on a 1D curve, which is marked as $\Gamma_i$. On $\Gamma_i$, a classical Dirichlet type boundary condition is applied on the local side. In the nonlocal solver, to impose a generalization of the Robin type boundary condition on $\Gamma_i$, a modified nonlocal formulation is applied in a collar region $B\Omega_i$. 
For the external boundary conditions, we assume that suitable Dirichlet boundary conditions are imposed on the local and nonlocal subdomains, without loss of generality. Specifically, on the external boundary of the nonlocal side, the Dirichlet boundary condition is applied on a collar consisting of all points outside the domain that interact with points inside the domain, which is marked by $B\Omega_d$ (as shown in grey in the left plot of Figure \ref{fig1}). On the external boundary of the local side, the Dirichlet boundary condition is applied on a sharp 1D curve $\Gamma_d$.

Although the proposed technique is applicable to more general problems, in the local subdomain $\Omega_l$ we model the problem with a classical heat equation. In the nonlocal subdomain $\Omega_{nl}$ we consider a nonlocal integro-differential equation (IDE) which is a nonlocal analog to the classical heat equation. {We also assume that $\Gamma_i$ and $\Omega_{nl}$ are both bounded and connected.} Note that since the local and nonlocal regions interact on a sharp interface $\Gamma_i$, the proposed coupling framework can be applied on the general heterogeneous local-to-nonlocal (LtN) coupling problems when there is large jump in physical (diffusivity) properties across the local-nonlocal interface $\Gamma_i$. Further details of the local and nonlocal problems will be described in Sections \ref{sec:nl_model} and \ref{sec:l_model}, respectively, and we leave the discussions of Robin transmission conditions on the coupling interface to a later Section \ref{section:robin}.

\subsection{Nonlocal Heat Problem}\label{sec:nl_model}

For the nonlocal subproblem we study compactly supported nonlocal integro-differential equations (IDEs) with radial kernels:
\begin{align}
\nonumber&\dot{u}_{nl,\delta}(\xb,t)-\alpha_{nl}\mcL_{\delta} u_{nl,\delta}(\xb,t)=f_{nl}(\xb,t),\quad \mathbf{x}\in\Omega_{nl}\\
\nonumber&u_{nl,\delta}(\xb,t)=u^D_{nl}(\xb,t),\quad \mathbf{x}\in B\Omega_{d}\\
\nonumber&u_{nl,\delta}(\xb,0)=u^{IC}(\xb),\quad \mathbf{x}\in\Omega_{nl}\\
& \text{ where }\quad\mcL_{\delta} u_{nl,\delta}(\xb,t):=2\int_{B(\mathbf{x},\delta)} J_\delta(|\mathbf{x}-\mathbf{y}|)(u_{nl,\delta}(\mathbf{y},t)-u_{nl,\delta}(\mathbf{x},t))d\mathbf{y}.\label{eqn:nonlocaldelta}
\end{align}
Here $B(\mathbf{x},\delta)$ is the ball centered at $\mathbf{x}$ with radius $\delta$, $u_{nl,\delta}(\mathbf{x},t)$ is the nonlocal solution, $\dot{u}_{nl,\delta}$ is the first derivative in time of $u_{nl,\delta}$, $\alpha_{nl}$ denotes the diffusivity coefficient for $\Omega_{nl}$, and $f_{nl}(\mathbf{x},t)$, $u^D_{nl}(\mathbf{x},t)$ are given data and nonlocal Dirichlet boundary condition, respectively. $u^{IC}(\xb)$ is the initial condition. The kernel function $J_\delta:\mathbb{R}\rightarrow\mathbb{R}$ is parameterized by a positive horizon parameter $\delta$ which measures the extent of nonlocal interaction. In this nonlocal setting every point in a domain interacts with a neighborhood Euclidean ball of surrounding points $B(\mathbf{x},\delta)$. Therefore, the external boundary conditions are no longer prescribed on a sharp interface $\partial\Omega_{nl}$, but on a layer of thickness $\horizon$ surrounding the domain that we refer to as $B\omg_{d}$.

In this paper we further take a popular choice of $J_\delta$ as a rescaled kernel given by \cite{cortazar2007boundary}
\begin{equation}\label{eqn:Jdelta}
 J_{\delta}(|\bm{\xi}|)=\dfrac{1}{\delta^{4}}J\left(\dfrac{|\bm{\xi}|}{\delta}\right),
\end{equation}
where $J:[0,\infty)\rightarrow[0,\infty)$ is a nonnegative integrable function with $\int_{\mathbb{R}^2} J(|\mathbf{z}|) |\mathbf{z}|^2 d\mathbf{z}=2$. 
Similar as in \cite{tao2017nonlocal}, we assume that $J(r)$ is nonincreasing in $r$, strictly positive in $r\in[0,1]$ and vanishes when $r>1$.  It can be shown that at the limit of vanishing nonlocality, i.e. as $\horizon\to 0$, the above nonlocal diffusion operator $J_\delta$ converges to the classical Laplacian $\Delta$ operator (see, e.g. \cite{tao2017nonlocal,You2018}):
\begin{equation}\label{eq:local-limit}
\mcL_\delta v(\xb) = \Delta v(\xb) + \mcO(\horizon^2) {\rm D}^{(4)}v(\xb),
\end{equation}
where $v$ is a sufficiently smooth function and ${\rm D}^{(4)}$ is a combination of the fourth-order derivatives of $v$. Examples of properly scaled kernels in 2D include
\begin{displaymath}
J^1_\delta(r)=
\left\{\begin{array}{cl}
\dfrac{4}{\pi\delta^4},\; &\text{ for }r\leq \delta; \\
0,\;& \text{ for }r> \delta; \\
\end{array}
\right.
\quad {\rm and} \quad
J^2_\delta(r)=\left\{\begin{array}{cl}
\dfrac{3}{\pi\delta^3r},\; &\text{ for }r\leq \delta; \\
0,\;& \text{ for }r> \delta. \\
\end{array}
\right.
\end{displaymath}

To discretize the nonlocal subproblem spatially, we employ a meshfree quadrature rule based on the generalized moving least squares (GMLS) approximation framework \cite{Trask2018paper}. In the following we consider the Dirichlet-type boundary conditions only, leaving the Robin-type boundary condition on $\Gamma_i$ to Section \ref{section:robin}. The nonlocal subdomain $\Omega_{nl}$ and the nonlocal volumetric boundary $B\Omega_{d}$ are discretized by a collection of points $ \chi_{h} = \{\mathbf{x}_i\}_{\{i=1,2,\cdots,N_p\}} \subset \Omega_{nl} \cup B\Omega_{d}$, where the fill distance
\begin{equation}
	h := \sup\limits_{\mathbf{x}_i \in \chi_{h}} \min\limits_{1\leq j \leq N_p, j\neq i} |\mathbf{x}_i-\mathbf{x}_j|
\end{equation}
is a length scale characterizing the resolution of the point cloud, and $N_p$ denotes the total number of points. Similar as in \cite{You2018}, here we assume that the point set is quasi-uniform. For each point $\mathbf{x}_i\in \chi_h$, denoting the set of indices for points in $B(\mathbf{x}_i,\delta)$ as
\begin{equation}
I(\mathbf{x}_i) \equiv I(\mathbf{x}_i,\delta,\chi_{h}) := \{j\in\{1,\cdots,N_p\}:|\mathbf{x}_i-\mathbf{x}_j|<\delta\},
\end{equation}
and $\#I(\mathbf{x}_i)$ as the number of indices in $I(\mathbf{x}_i)$, we then aim to reconstruct a degree $m$ polynomial approximation $s_{u,\chi_{h},i}(\mathbf{x},t)$ for the nonlocal solution $u_{nl,\delta}(\mathbf{x},t)$ in $B(\mathbf{x}_i,\delta)$. Specifically, define as a basis for the $m$-th order polynomial space $\pi_m(\mathbb{R}^2)$ the set $\{p_1(\mathbf{x}),p_2(\mathbf{x}),\cdots,p_Q(\mathbf{x})\}$, $s_{u,\chi_{h},i}$ is the solution to the optimization problem 
\begin{equation}\label{eqn:gmls}
s_{u,\chi_{h},i}(\mathbf{x},t) =\underset{p \in \pi_m(\mathbb{R}^2)} {\operatorname{arg\,min}} \Bigg \{ \sum_{j=1}^{N_p}[u_{nl,\delta}(\mathbf{x}_j,t)-p(\mathbf{x}_j)]^2w(\mathbf{x}_i,\mathbf{x}_j)\Bigg\},
\end{equation}
where $w(\mathbf{x},\mathbf{y})$ is a translation-invariant positive weight function with compact support $\delta$:
\begin{displaymath}
w(\mathbf{x},\mathbf{y}) := \Phi_{\delta}(\mathbf{x}-\mathbf{y})=\left\{\begin{array}{cl}
(1-\frac{|\mathbf{x}-\mathbf{y}|}{\delta})^4, & \text{ when }|\mathbf{x}-\mathbf{y}|\leq \delta,\\
0& \text{ when }|\mathbf{x}-\mathbf{y}|> \delta.
\end{array}
\right.
\end{displaymath}
Here we note that for a quasi-uniform pointset with sufficiently large ratio $\delta/h$, 
the optimization problem possesses a unique solution \cite{wendland2004scattered}
\begin{equation}
s_{u,\chi_{h},i}(\mathbf{x},t) = \tilde{u}_\delta(t)DP(P^{\mathrm{T}}DP)^{-1}R(\mathbf{x}),
\end{equation}
where 
\begin{align*}
\tilde{u}_\delta(t) &:= (u_{nl,\delta}(\mathbf{x}_j,t):j\in I(\mathbf{x}_i))^{\mathrm{T}} \in \mathbb{R}^{\#I(\mathbf{x}_i)},\qquad
P := (p_k(\mathbf{x}_j))_{j\in I(\mathbf{x}_i),1\leq k \leq Q} \in \mathbb{R}^{\#I(\mathbf{x}_i) \times Q},\\
D &:= \operatorname{diag}(\Phi_\delta(\mathbf{x}_i-\mathbf{x}_j):j \in I(\mathbf{x}_i)) \in \mathbb{R}^{\#I(\mathbf{x}_i) \times 
\#I(\mathbf{x}_i)},\qquad
R(\mathbf{x}) := (p_1(\mathbf{x}),\cdots,p_Q(\mathbf{x}))^{\mathrm{T}}\in \mathbb{R}^Q.
\end{align*}
Note that when $u_{nl,\delta} \in \pi_m(\mathbb{R}^2)$, the above reconstruction is exact, i.e., $s_{u,\chi_{h},i}(\mathbf{x},t)=u_{nl,\delta}(\mathbf{x},t)$. We then employ the reconstruction to evaluate the nonlocal model in \eqref{eqn:nonlocaldelta} and obtained the semi-discretized formulation for $\tilde{u}_\delta(t)$:
\begin{equation}\label{eqn:nonlocaldx}
\dot{{u}}_{nl,\delta}(\xb_i,t)-2\alpha_{nl}\tilde{u}_\delta(t)DP(P^{\mathrm{T}}DP)^{-1}\int_{B(\mathbf{x}_i,\delta)} J_{\delta}(|\mathbf{y}-\mathbf{x}_i|)(R(\mathbf{y})-R(\mathbf{x}_i))d\mathbf{y} = f_{nl}(\xb_i,t).
\end{equation}
For further details on analysis and implementation of the meshfree quadrature rule we refer the interested readers to the previous work \cite{Trask2018paper,You2018}, where we have employed this meshfree quadrature rule to develop asymptotically compatible spatial discretizations for static nonlocal diffusion model and peridynamics. To discretize \eqref{eqn:nonlocaldx} in time, in this paper we employ the backward Euler scheme for simplicity and solve for $(U_{\delta})^{k}_j\approx u_{nl,\delta}(\xb_j,t^k)$ with:
\begin{equation}\label{eqn:nonlocaldxdt}
\dfrac{1}{\Delta t}((U_{\delta})^{k+1}_i-(U_{\delta})^{k}_i)-2\alpha_{nl}\tilde{u}_\delta^{k+1}DP(P^{\mathrm{T}}DP)^{-1}\int_{B(\mathbf{x}_i,\delta)} J_{\delta}(|\mathbf{y}-\mathbf{x}_i|)(R(\mathbf{y})-R(\mathbf{x}_i))d\mathbf{y} = f_{nl}(\xb_i,t^{k+1}),
\end{equation}
with Dirichlet boundary condition $(U_{\delta})_j^k=u^D_{nl}(\xb_j,t^k)$ for $\xb_j\in B\Omega_d$. Here $\tilde{u}^k_{\delta}=((U_\delta)_j^k:j\in I(\mathbf{x}_i))^{\mathrm{T}} \in \mathbb{R}^{\#I(\mathbf{x}_i)}$. In the following we refer the nonlocal numerical solution with spatial discretization length scale $h$ at the $M$-th time step as $u^{M,h}_{nl,\delta}$.

\subsection{Local Heat Problem}\label{sec:l_model}

For the local subproblem we consider the classical heat equation
\begin{align}
\nonumber&\dot{u}_l(\xb,t)-\alpha_{l}\Delta u_l(\xb,t)=f_l(\xb,t),\quad \mathbf{x}\in\Omega_{l}\\
\nonumber&u_l(\xb,t)=u_l^D(\xb,t),\quad \mathbf{x}\in \Gamma_{d}\\
&u_l(\xb,0)=u^{IC}(\xb),\quad \mathbf{x}\in\Omega_{l}\label{eqn:local}
\end{align}
where $u_l(\xb,t)$ is the local solution, $\dot{u}_l$ is the first derivative of $u_l$ in time, $\alpha_l$ is the diffusivity in the local region, $u^{IC}$ is the initial condition, $f_l(\xb,t)$ is the given data, and $u_l^D(\xb,t)$ is the given Dirichlet boundary condition on the 1D external boundary $\Gamma_d$. Note that for coupling framework with overlapping regions such as \cite{yu2018partitioned}, it usually requires $\alpha_{l}=\alpha_{nl}$ such that the nonlocal model will be equivalent with the local model as $\delta\rightarrow 0$. However, since in the current paper a nonoverlapping coupling framework is considered, it is possible that $\alpha_{nl}\neq \alpha_l$.

The local subproblem is solved with a finite element method code based on the FEniCS package \cite{alnaes2015fenics,logg2012automated}. Spatially, with a test function $v$, when considering the Dirichlet boundary conditions only the local subproblem can be written into its weak form
\begin{equation}\label{eqn:localweak}
\int_{\Omega_l} \dot{u}_l(\xb,t){v}(\xb)d\xb+\alpha_l\int_{\Omega_l}\nabla {u}_l(\xb,t)\cdot\nabla {v}(\xb)d\xb=\int_{\Omega_l}f_l(\xb,t){v}(\xb)d\xb,\;\forall {v}\in \mathbf{W}_0,
\end{equation}
where the solution ${u}_l(\mathbf{x},t)\in \mathbf{W}=%
\{{w}(\mathbf{x})\in H^1(\Omega_l)|{w}(\mathbf{x})=u^D_l(\mathbf{x},t)\; %
\text{on}\; \Gamma_{d}\}$ and $\mathbf{W}_0=\{{w}(\mathbf{x})%
\in H^1(\Omega_l)|{w}(\mathbf{x})=0\; \text{on}\; \Gamma_{d}\}$. $\Omega_l$ is discretized with a regular quasi-uniform triangulation $\mathcal{T}_h(\Omega_l)$ of mesh size $h=\underset{T\in \mathcal{T}_h(\Omega_l)}{\max}h_T$, and the local solution $u_l$ is approximated by continuous linear finite elements\footnote{Because the partitioned procedure is employed, the proposed framework can also be applied to the case when the two subdomain are discretized with different discretization length scales, i.e., $h_{nl}\neq h_l$. However, in this paper we demonstrate the method and results for $h_{nl}=h_l=h$ unless otherwise stated.}. With linear shape functions $\psi_p(\xb)$  for each element, the local solution $u_l(\xb,t)$ and the test function $v_l(\xb)$ are expanded as
$$u_l(\xb,t)=\sum_{p=1}^{3}(U_l)_{ip}(t)\psi_p(\xb),\qquad v_l(\xb)=\sum_{p=1}^{3}(V_l)_{ip}\psi_p(\xb),$$
where $(U_l)_{ip}(t)$ and $(V_l)_{ip}$ are the expansion coefficients and $ip$ is the global index of the coefficient. Substuting the above expansions into the weak formulation \eqref{eqn:localweak} and assembling globally, we obtain
$$M_l\dot{\Ub}_l(t)+B_l\Ub_l(t)=\Fb_l(t).$$
Here $\Ub_l$ is the global vector of unknown expansion coefficients, $\Fb_l$ is the global vector of the external loads, $M_l$ is the mass matrix and $B_l$ is the stiffness matrix. We then employ the backward Euler scheme for time integration and solve for $(\Ub_l)^{k+1}$:
\begin{equation}\label{eqn:localdisc}
\dfrac{M_l}{\Delta t}({\Ub}^{k+1}_l-{\Ub}^{k}_l)+B_l\Ub_l^{k+1}=\Fb^{k+1}_l,
\end{equation}
at the $k$-th time step. In the following we denote the local numerical solution with mesh size $h$ at the $M$-th time step as $u^{M,h}_{l}$.

\section{Boundary Conditions for Nonlocal Problems}\label{section:robin}

\begin{figure}[!htb]\centering
 \subfigure{\includegraphics[width=0.25\textwidth]{./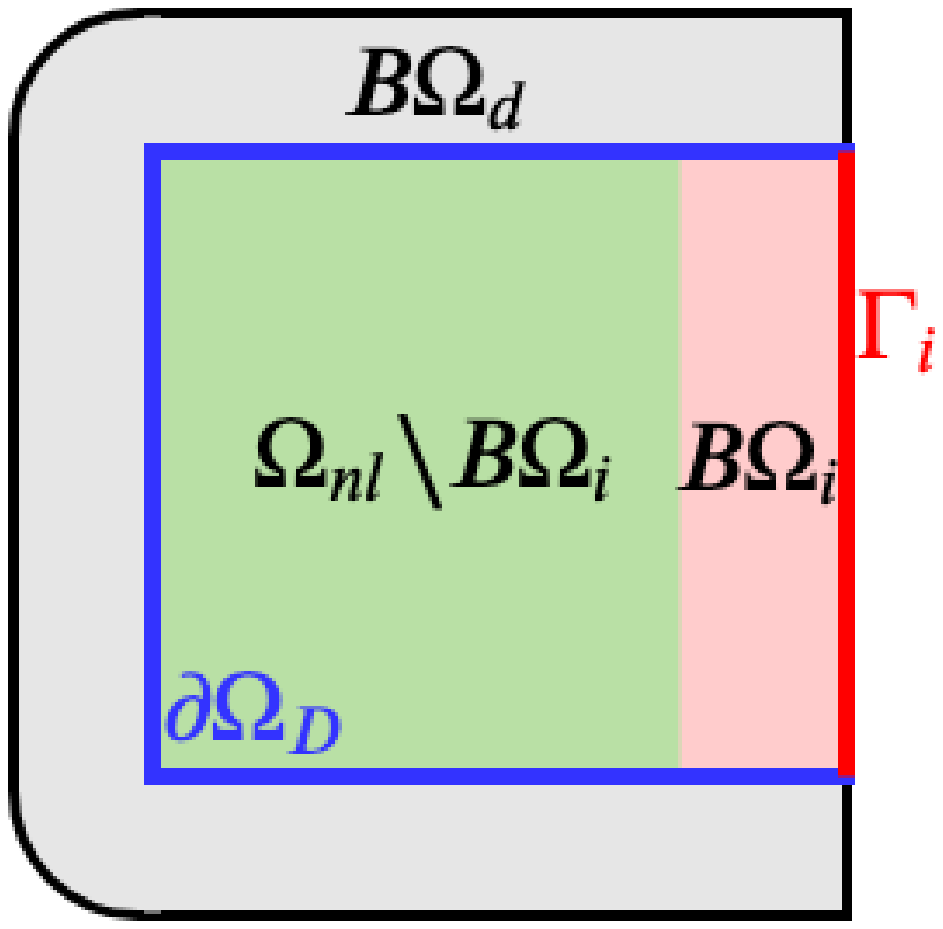}}$\qquad\qquad$
 \subfigure{\includegraphics[width=0.3\textwidth]{./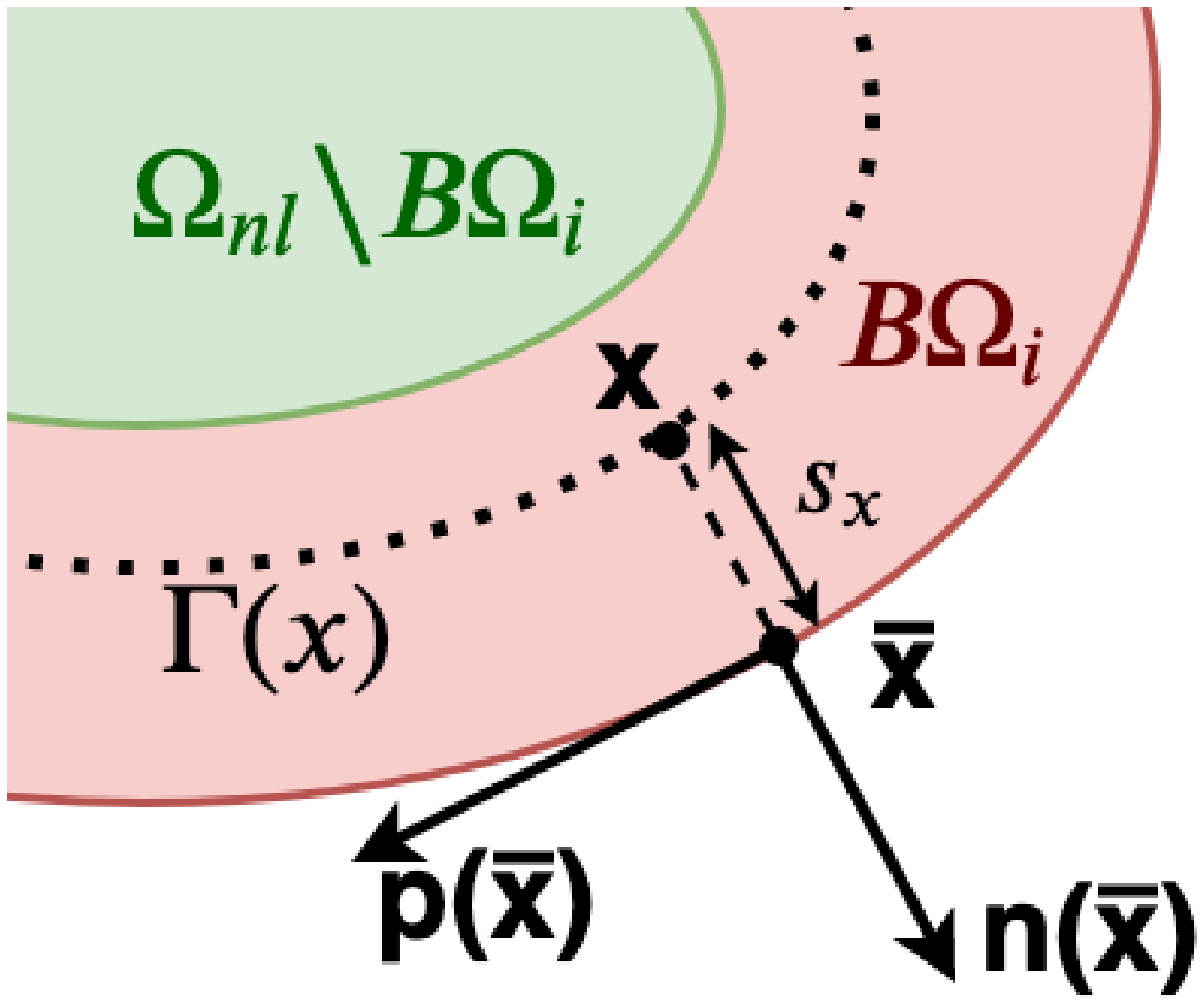}}
 \caption{Notations for the Neumann and Robin-type boundary conditions in the nonlocal subdomain, where the nonlocal subdomain $\Omega_{nl}$ is represented by the green and red regions together. Nonlocal Dirichlet boundary condition is given on $B\Omega_d$, and nonlocal Robin transmission condition is applied on the red region $B\Omega_i$. On the corresponding local limit, Dirichlet boundary condition is given on $\partial\Omega_D$ and Robin transmission condition is applied on $\Gamma_i$. Right: Notations for the projection of point $\mathbf{x}\in B\Omega_i$, the corresponding unit tangential vector $\mathbf{p}(\overline{\mathbf{x}})$ and the unit normal vector $\mathbf{n}(\overline{\mathbf{x}})$. 
 }
 \label{fig2}
\end{figure}

In this section, we consider the nonlocal subproblem only, with problem setting as shown in Figure \ref{fig2}. Given that $\Omega_{nl}\in \mathbb{R}^2$ is a bounded, convex, connected and {$C^{3}$} domain, $\beta$ is the Robin coefficient, we seek a nonlocal analogue to the local Robin boundary condition $\beta u(\mathbf{x})+\dfrac{\partial u}{\partial \mathbf{n}}(\mathbf{x})=g(\mathbf{x})$, $\mathbf{x}\in \Gamma_i$ in the following corresponding classical problem
\begin{align}
\nonumber&\dot{u}_0(\xb,t)-\alpha_{nl}\Delta u_0(\xb,t)=f_{nl}(\xb,t),\quad \mathbf{x}\in\Omega_{nl}\\
\nonumber&\beta u_0(\mathbf{x},t)+\dfrac{\partial u_0}{\partial \mathbf{n}}(\mathbf{x},t)=g(\mathbf{x},t),\quad\text{ on }\Gamma_i\\
\nonumber&u_0(\xb,t)=u_{nl}^D(\xb,t),\quad \mathbf{x}\in \partial\Omega_{D}\\
&u_0(\xb,0)=u^{IC}(\xb),\quad \mathbf{x}\in\Omega_{nl}.\label{eqn:locallimit}
\end{align}
Here we assume that the body load, boundary conditions and initial conditions satisfy proper consistency conditions. As shown in Figure \ref{fig2}, here $\mathbf{n}({\mathbf{x}})$ is the unit exterior normal to $\Omega$ at ${\mathbf{x}}$, $\mathbf{p}({\mathbf{x}})$ is the unit tangential vector with orientation clockwise to $\mathbf{n}({\mathbf{x}})$, $\partial \Omega_D$ and $\Gamma_i$ are both 1D curves with classical Dirichlet and Robin-type boundary conditions defined on them, respectively. Nonlocal Dirichlet-type constraint is applied on $B\Omega_d=\{\mathbf{x}\in{\mathbb{R}^2\backslash\Omega_{nl}}:%
\text{dist}(\mathbf{x},\partial\Omega_{D})\leq\delta\}$. In the analysis of this section, we assume $u(\mathbf{x})=0$ on $B\Omega_d$ without loss of generality. Similarly, to apply the Robin-type constraint on $\Gamma_i$, we denote $B\Omega_{i}=\{\mathbf{x}\in\Omega_{nl}:\overline{\mathbf{x}}\in\Gamma_i,\text{dist}(\mathbf{x},\Gamma_{i})\leq\delta\}$. Here we assume sufficient regularity of the boundary {(e.g., that it satisfies the hypotheses of the $\epsilon$-neighborhood theorem from differential geometry)} that we may take $\delta$ sufficiently small so that for any $\mathbf{x}\in\Omega$ within distance $\delta$ to $\Gamma_i$, there exists a unique orthogonal projection of $\mathbf{x}$ onto $\Gamma_i$. We denote this projection as $\overline{\mathbf{x}}$. Therefore, one has
$\overline{\mathbf{x}}-\mathbf{x}=s_{\xb}\mathbf{n}(\overline{\mathbf{x}})$ for $\mathbf{x}\in B\Omega_{i}$, where $0\leq s_\xb\leq\delta$. 
We also assume that for $\mathbf{x}\in B\Omega_i$, we can find a contour $\Gamma(\mathbf{x})$ which is parallel to $\Gamma_i$ {(i.e., a level-set of a signed distance function)}, as shown in the right plot of Figure \ref{fig2}. In the following contents, we denote $\mathbf{x}_{l}$ as the point with distance $l$ to $\mathbf{x}$ along $\Gamma(\mathbf{x})$ following the $\mathbf{p}(\overline{\mathbf{x}})$ direction, and $\mathbf{x}_{-l}$ as the point with distance $l$ to $\mathbf{x}$ in the opposite direction. Moreover, we employ the following notations for the directional components of the Hessian matrix of a scalar function $v$:
%
\begin{align*}
[v({{\mathbf{x}}})]_{pp}:=\mathbf{p}^T(\overline{\mathbf{x}}) [\nabla\circ\nabla v({{\mathbf{x}}})] \mathbf{p}(\overline{\mathbf{x}}),\qquad [v({{\mathbf{x}}})]_{nn}:=\mathbf{n}^T(\overline{\mathbf{x}}) [\nabla\circ\nabla v({{\mathbf{x}}})] \mathbf{n}(\overline{\mathbf{x}}),\qquad[v({{\mathbf{x}}})]_{pn}:=\mathbf{p}^T(\overline{\mathbf{x}}) [\nabla\circ\nabla v({{\mathbf{x}}})] \mathbf{n}(\overline{\mathbf{x}}),
\end{align*}
and the higher order derivative components are similarly defined.

With the above notations and assumptions, in this section we first introduce a nonlocal Neumann boundary condition in Section \ref{sec:neumann}, then estimate the order of convergence rate to the corresponding local limit. With the Neumann-type boundary condition, we then propose a new generalization of classical Robin condition for nonlocal problems in Section \ref{sec:robinmath}. 
To verify the asymptotic convergence of the proposed boundary treatment, we discretize the proposed Robin-type constraint problem with the meshfree quadrature rule \cite{Trask2018paper,You2018} in Section \ref{sec:Robintest}, then use numerical examples to demonstrate the convergence of the discrete model to the analytical local limit as the discretization length scale $h$, time step size $\Delta t$ and the nonlocal interaction length scale $\delta$ all vanish simultaneously.

\subsection{A Nonlocal Neumann-Type Boundary Condition}\label{sec:neumann}

When $\beta=0$, in \eqref{eqn:locallimit} the Neumann boundary condition is imposed on $\Gamma_i$. Inspired by \cite{cortazar2008approximate,tao2017nonlocal,You2018}, we propose the nonlocal Neumann-type boundary condition by firstly considering the following modification for $\xb\in B\Omega_i$:
\begin{align}
 \nonumber&\dot{u}_{nl,\delta}(\xb,t)-2\alpha_{nl}\int_{\Omega_{nl}} J_{\delta}(|\mathbf{x}-\mathbf{y}|)(u_{nl,\delta}(\mathbf{y},t)-u_{nl,\delta}(\mathbf{x},t))d\mathbf{y}
 -\alpha_{nl}\int_{\mathbb{R}^2\backslash\Omega_{nl}} J_{\delta}(|\mathbf{x}-\mathbf{y}|)(\mathbf{y}-\mathbf{x})\cdot\mathbf{n}(\overline{\mathbf{x}})%
 \left(g({\mathbf{x}},t)+g({\mathbf{y}},t)\right)d\mathbf{y}\\
&-\alpha_{nl}\int_{\mathbb{R}^2\backslash\Omega_{nl}} J_{\delta}(|\mathbf{x}-\mathbf{y}|)|(\mathbf{y}-\mathbf{x})\cdot\mathbf{p}(\overline{\mathbf{x}})|^2%
 d\mathbf{y}[ u_{nl,\delta}({{\mathbf{x}}},t)]_{pp}=f_{nl}(\mathbf{x},t).\label{eqn:formula1}
\end{align}
The last two terms on the left hand side of the above formulation provide an approximation to
$$-2\alpha_{nl}\int_{\mathbb{R}^2\backslash\Omega_{nl}} J_{\delta}(|\mathbf{x}-\mathbf{y}|)(u_{nl,\delta}(\mathbf{y},t)-u_{nl,\delta}(\mathbf{x},t))d\mathbf{y},$$
which account for the contributions from material points outside the nonlocal domain \cite{cortazar2008approximate,park2006surface}. To apply the Robin transmission condition $g(\mathbf{x},t)$ which is defined only on the sharp interface $\Gamma_i$, the $g({\mathbf{x}},t)$ and $g({\mathbf{y}},t)$ terms in \eqref{eqn:formula1} will be approximated with the following (local) extensions
\begin{align}
 \nonumber&g({\mathbf{x}},t)\approx g(\overline{\mathbf{x}},t)+ \dfrac{1}{\alpha_{nl}}(\mathbf{x}-\overline{\mathbf{x}})\cdot\mathbf{n}(\overline{\mathbf{x}})(\dot{u}_{nl,\delta}({{\mathbf{x}}},t)-f_{nl}({{\mathbf{x}}},t))-%
 (\mathbf{x}-\overline{\mathbf{x}})\cdot\mathbf{n}(\overline{\mathbf{x}}) [ u_{nl,\delta}({{\mathbf{x}}},t)]_{pp},\\
 \nonumber&g({\mathbf{y}},t)\approx g(\overline{\mathbf{x}},t)+\dfrac{1}{\alpha_{nl}}(\mathbf{y}-\overline{\mathbf{x}})\cdot\mathbf{n}(\overline{\mathbf{x}}) (\dot{u}_{nl,\delta}({{\mathbf{x}}},t)-f_{nl}({{\mathbf{x}}},t))-%
 (\mathbf{y}-\overline{\mathbf{x}})\cdot\mathbf{n}(\overline{\mathbf{x}}) [ u_{nl,\delta}({{\mathbf{x}}},t)]_{pp}.
\end{align}
Furthermore, we replace $[ u_{nl,\delta}({{\mathbf{x}}},t)]_{pp}$ with its approximation $2\int_{-\delta}^{\delta} H_{\delta}(|l|) (u_{nl,\delta}(\mathbf{x}_{l},t)-u_{nl,\delta}(\mathbf{x},t)) d\mathbf{x}_{l}-\kappa(\overline{\mathbf{x}})g(\overline{\mathbf{x}},t)$.
Here $d\mathbf{x}_{l}$ is the line integral along the contour $\Gamma(\mathbf{x})$, $\kappa(\overline{\mathbf{x}})$ is the curvature of $\partial\Omega_{nl}$ at $\overline{\mathbf{x}}$, and  $H_{\delta}(|r|)=\dfrac{1}{\delta^3}H\left(\dfrac{|r|}{\delta}\right)$ is the kernel for %
1D nonlocal diffusion model such that $H:[0,\infty)\rightarrow[0,\infty)$ is a nonnegative and continuous function with $\int_{\mathbb{R}} H(|z|) |z|^2 dz=1$. $H(r)$ is nonincreasing in $r$, strictly positive in $[0,1]$ and vanishes for $|z|>1$. Moreover, we add a further requirement on $H$ that $\int_{\mathbb{R}} H(z) dz:=C_H< \infty$. Substituting the above two approximations into \eqref{eqn:formula1}, we obtain the following model
\begin{align}
\nonumber&Q_\delta(\xb)\dot{u}_{nl,\delta}(\xb,t)-2\alpha_{nl}\int_{\Omega_{nl}} J_{\delta}(|\mathbf{x}-\mathbf{y}|)(u_{nl,\delta}(\mathbf{y},t)-u_{nl,\delta}(\mathbf{x},t))d\mathbf{y}-2\alpha_{nl}M_\delta(\mathbf{x}) \int_{-\delta}^{\delta} H_{\delta}(|l|) (u_{nl,\delta}(\mathbf{x}_{l},t)-u_{nl,\delta}(\mathbf{x},t)) d\mathbf{x}_{l}\\
\nonumber=&Q_\delta(\xb)f_{nl}({{\mathbf{x}}},t)+\alpha_{nl}V_\delta(\xb)g(\overline{\mathbf{x}},t)
\end{align}
where
\begin{equation}\label{eqn:Q}
Q_\delta(\xb):=1-\int_{\mathbb{R}^2\backslash\Omega_{nl}} J_{\delta}(|\mathbf{x}-\mathbf{y}|)%
 \left[|(\mathbf{y}-\overline{\mathbf{x}})\cdot\mathbf{n}(\overline{\mathbf{x}})|^2-|(\mathbf{x}-\overline{\mathbf{x}})\cdot\mathbf{n}%
 (\overline{\mathbf{x}})|^2\right]d\mathbf{y},
\end{equation}
\begin{equation}\label{eqn:S}
V_\delta(\xb):=2\int_{\mathbb{R}^2\backslash\Omega_{nl}} J_{\delta}(|\mathbf{x}-\mathbf{y}|)(\mathbf{y}-\mathbf{x})\cdot\mathbf{n}(\overline{\mathbf{x}}) d\mathbf{y}-M_\delta(\mathbf{x})\kappa(\overline{\mathbf{x}}),
\end{equation}
\begin{equation}\label{eqn:M}
M_\delta(\mathbf{x}):=\int_{\mathbb{R}^2\backslash\Omega_{nl}} J_{\delta}(|\mathbf{x}-\mathbf{y}|)\left[|(\mathbf{y}-\mathbf{x})%
 \cdot\mathbf{p}(\overline{\mathbf{x}})|^2-|(\mathbf{y}-\overline{\mathbf{x}})\cdot\mathbf{n}(\overline{\mathbf{x}})|^2+|(\mathbf{x}-\overline{\mathbf{x}})\cdot\mathbf{n}(\overline{\mathbf{x}})|^2\right]%
 d\mathbf{y}.
 \end{equation}
Thus, by defining the nonlocal operator:
\begin{align}
L_{N\delta} u:=&2\int_{\Omega_{nl}} J_{\delta}(|\mathbf{x}-\mathbf{y}|)(u(\mathbf{y},t)+u(\mathbf{x},t))d\mathbf{y}+2M_\delta(\mathbf{x}){\int_{-\delta}^{\delta} H_{\delta}(|l|) (u(\mathbf{x}_{l},t)-u(\mathbf{x},t)) d\mathbf{x}_{l}}\label{eqn:LN}
\end{align}
in problems with Neumann-type boundary conditions we obtain the following proposed nonlocal formulation
\begin{align}
\nonumber&\dot{u}_{nl,\delta}(\xb,t)-\alpha_{nl}\mcL_{\delta} u_{nl,\delta}(\xb,t)=f_{nl}(\xb,t),\quad \mathbf{x}\in\Omega_{nl}\backslash B\Omega_{i}\\
\nonumber&Q_\delta(\xb)\dot{u}_{nl,\delta}(\xb,t)-\alpha_{nl}\mcL_{N\delta} u_{nl,\delta}(\xb,t)=Q_\delta(\xb)f_{nl}(\xb,t)+\alpha_{nl}V_\delta (\xb)g(\overline{\mathbf{x}},t),\quad \mathbf{x}\in B\Omega_{i}\\
\nonumber&u_{nl,\delta}(\xb,t)=u^D_{nl}(\xb,t),\quad \mathbf{x}\in B\Omega_{d}\\
&u_{nl,\delta}(\xb,0)=u^{IC}(\xb).\quad \mathbf{x}\in\Omega_{nl}\label{eqn:nonlocaleqn}
\end{align}
The corresponding nonlocal energy seminorm $||\cdot||_{S_\delta}$ is given by
{\begin{align*}
||v||^2_{S_\delta(\Omega_{nl})}=&\int_{\Omega_{nl}}\int_{\Omega_{nl}} J_{\delta}(|\mathbf{x}-\mathbf{y}|)[v(\mathbf{y})-v(\mathbf{x})]^2d\mathbf{y}d\mathbf{x}
+\int_{B\Omega_i} M_\delta(\mathbf{x}) \int_{-\delta}^{\delta} H_{\delta}(|l|) [v(\mathbf{x}_{l})-v(\mathbf{x})]^2d\mathbf{x}_{l}d\mathbf{x}
\end{align*} }
which defines the energy space\footnote{We note that for a fixed $\delta$ and integrable kernels $J$, $H$, based on the results in \cite{Ponce2004estimate,du2012analysis,mengesha2013analysis,You2018} we have
$$||u||_{L^2(\Omega_{nl})}\leq C_1||u||_{S_\delta(\Omega_{nl})}\leq C_2(\delta)||u||_{L^2(\Omega_{nl})},$$
where $C_1, C_2(\delta)$ are constants independent of $u$ but $C_2(\delta)$ depends on $\delta$. Therefore, ${S_\delta(\Omega)}$ is equivalent to the space of $L^2(\Omega)$ functions.}
\begin{displaymath}
 S_{\delta}(\Omega_{nl})=\left\{v\in L^2(\Omega_{nl}): ||v||_{S_\delta(\Omega_{nl})}<\infty\right\}.
\end{displaymath}

We now develop the analysis for homogeneous Neumann-type constraints, i.e., $g(\mathbf{x},t)=0$. Throughout this section, we consider the symbol ``$C$'' to indicate a generic constant that is independent of $\delta$, but may have different numerical values in different situations. Moreover, we make a critical geometric assumption for the simplicity of analysis (as illustrated by the left plot of Figure \ref{graph}): let $\{\mathbf{z}_1,\mathbf{z}_2\}:= \partial\Omega_D\cap \Gamma_i$, $\pi_{\partial\Omega_{nl}}$ be the projection operator onto $\partial \Omega_{nl}=\partial\Omega_D\cup \Gamma_i$, and $\tau(\mathbf{z}_1)$ (resp. $\tau(\mathbf{z}_2)$) be the tangent line to $\partial\Omega_{nl}$ at $\mathbf{z}_1$ (resp. $\mathbf{z}_2$), then we assume that the intersecting point $\tilde{\mathbf{z}}:=\tau(\mathbf{z}_1)\cap \tau(\mathbf{z}_2)$ satisfies $\pi_{\partial\Omega_{nl}}(\tilde{\mathbf{z}})\in \Gamma_i$. Here we note that due to the convexity of $\Omega_{nl}$, the map $\pi_{\partial\Omega_{nl}}(\mathbf{x})$ is always well defined and single-valued for any point $\mathbf{x}\notin\Omega_{nl}$.  

\begin{remark}
When $\Gamma_i$ is flat, $\tau(\mathbf{z}_1)$ and $\tau(\mathbf{z}_2)$ coincide. One can take the intersection point $\tilde{\mathbf{z}}$ as any point on $\Gamma_i$, and the analysis below still holds true.
\end{remark}

  With the analysis in \cite[Lemma~3.1]{You2018}, we note that there exists a $\overline{\delta}>0$ such that for $\delta\leq \overline{\delta}$, $\mathbf{x}\in B\Omega_i$ we have $0\leq M_\delta(\mathbf{x})\leq C$. Moreover, with the geometric assumptions on $\Omega_{nl}$ we have bounds for $Q_\delta(\xb)$:

\begin{figure}[!htb]
\centering
\subfigure{\includegraphics[scale=0.4]{./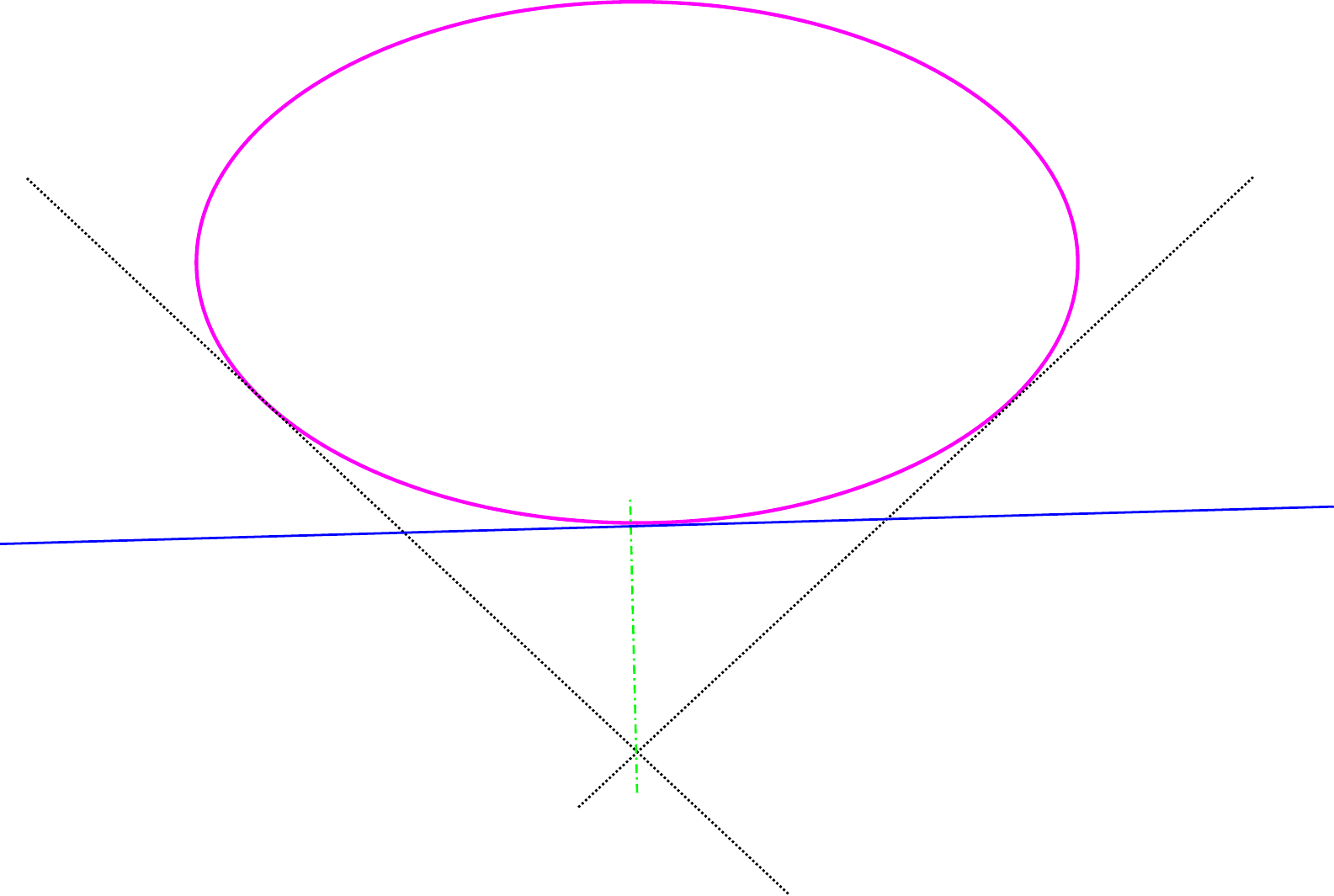}  
\put(-100,115){$\partial\Omega_D$}\put(-80,60){$\Gamma_i$}
\put(-155,60){$\mathbf{z}_1$}\put(-45,60){$\mathbf{z}_2$}
\put(-100,45){$\mathbf{z}_3$}
\put(-200,70){$\Pi$}
\put(-180,100){$\tau(\mathbf{z}_1)$}\put(-35,100){$\tau(\mathbf{z}_2)$}
\put(-25,45){$\tau(\mathbf{z}_3)$}
\put(-100,10){$\tilde{\mathbf{z}}$}}$\qquad\qquad$
\subfigure{\includegraphics[scale=.4]{./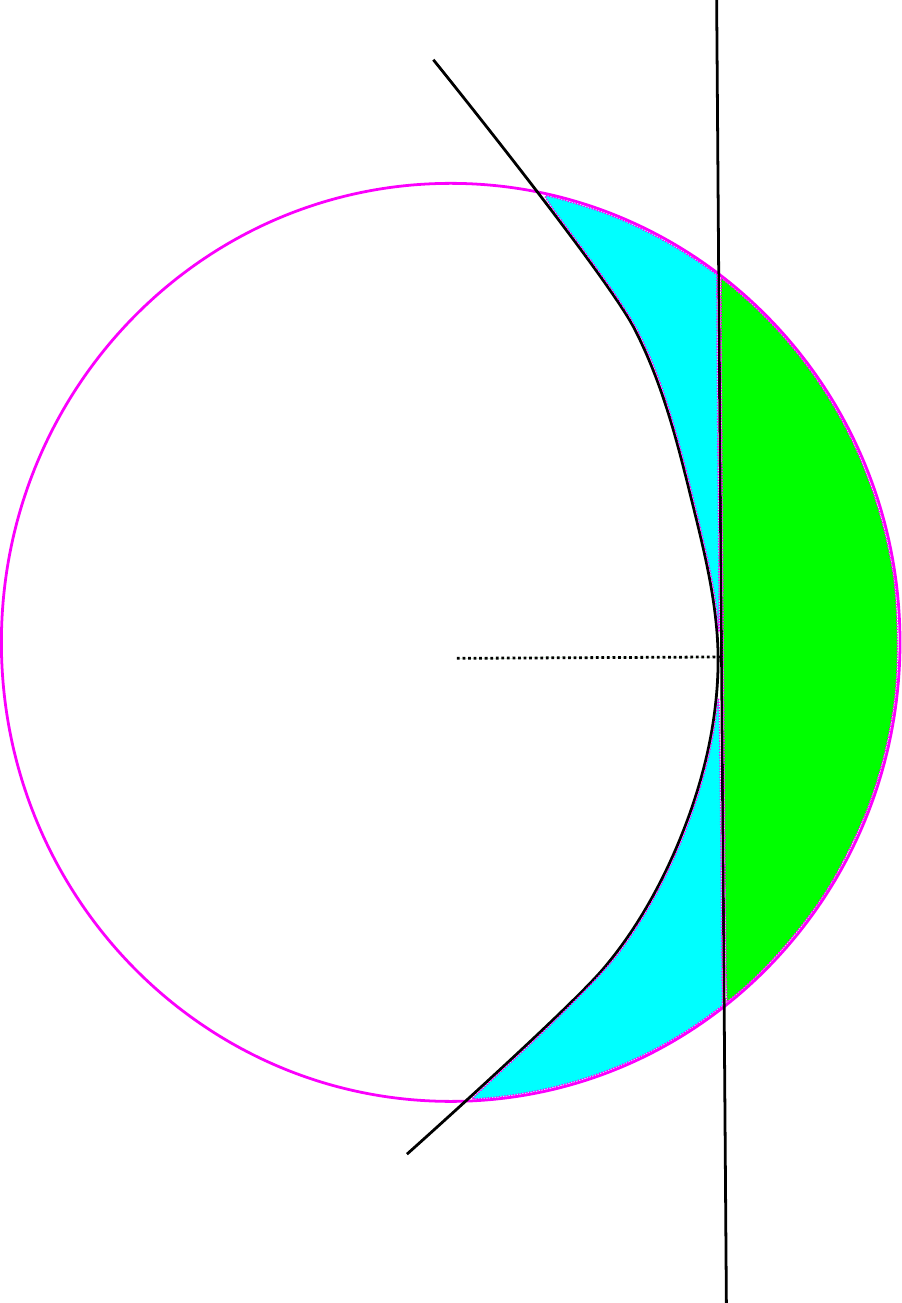}
  \put(-55,70){$\mathbf{x}$}
  \put(-20,70){$\overline{\mathbf{x}}$}
  \put(-20,10){$\tau(\overline{\mathbf{x}})$}
  \put(-15,90){$D_\delta$}
  \put(-33,115){$A_\delta$}
  \put(-35,35){$A_\delta$}
  \put(-50,135){$\Gamma_i$}
  \put(-100,90){$B(\mathbf{x},\delta)$}}
\caption{Notation for the geometric estimates in Section \ref{sec:robinmath}. Left: illustration of the geometric assumption and notation for the barrier function $\phi(\mathbf{x})$ definition in Theorem \ref{thm:conv}. Right: regions in Lemma \ref{lem:Q}. Green represents $D_\delta$, the region in $B(\mathbf{x},\delta)$ which lies on the other side of the tangential line at $\overline{\mathbf{x}}$ with respect to $\Omega_{nl}$. Cyan represents $A_\delta$, the region in $B_\delta(\mathbf{x})$ which lies between $\mathbb{R}^2\backslash\Omega_{nl}$ and the tangential line.}
 \label{graph}
\end{figure}

\begin{lem}\label{lem:Q}
Let $\Omega_{nl}\in \mathbb{R}^2$ {be a convex and $C^3$ domain}, then there exists $\overline{\delta}>0$ such that for $0<\delta\leq \overline{\delta}$, $Q_\delta(\xb)$ is bounded from above and below independent of $\delta$. Specifically, $0<\dfrac{1}{2}-C_q\delta\leq Q_\delta(\xb) \leq \dfrac{3}{2}+C_q\delta$ where $C_q>0$ is a constant independent of $\delta$.
\end{lem}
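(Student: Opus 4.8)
The plan is to write $Q_\delta(\xb) = 1 - I_\delta(\xb)$, where $I_\delta(\xb)$ denotes the integral over $\mathbb{R}^2\backslash\Omega_{nl}$ in \eqref{eqn:Q}, and to prove the two-sided bound $I_\delta(\xb)\in[-\tfrac12-C_q\delta,\,\tfrac12+C_q\delta]$ uniformly in $\xb\in B\Omega_i$. First I would fix $\xb\in B\Omega_i$ and work in the local orthonormal frame $(\mathbf{p}(\overline{\mathbf{x}}),\mathbf{n}(\overline{\mathbf{x}}))$ at the projection $\overline{\mathbf{x}}$, writing $\mathbf{y}-\overline{\mathbf{x}}=\xi\,\mathbf{p}(\overline{\mathbf{x}})+\eta\,\mathbf{n}(\overline{\mathbf{x}})$. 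Since $\mathbf{x}-\overline{\mathbf{x}}=-s_\xb\mathbf{n}(\overline{\mathbf{x}})$ with $0\le s_\xb\le\delta$, the bracket in \eqref{eqn:Q} is exactly $\eta^2-s_\xb^2$ and $|\mathbf{x}-\mathbf{y}|^2=\xi^2+(\eta+s_\xb)^2$. Because $J_\delta$ is supported in $B(\mathbf{x},\delta)$, the integration is confined to $B(\mathbf{x},\delta)\backslash\Omega_{nl}$, which convexity together with the $C^3$ and $\epsilon$-neighborhood hypotheses lets me split into the tangent half-disk $D_\delta=\{\eta>0\}\cap B(\mathbf{x},\delta)$ and the curved sliver $A_\delta=\{\eta_\partial(\xi)<\eta\le 0\}$ lying between $\partial\Omega_{nl}$ and its tangent line at $\overline{\mathbf{x}}$, exactly the regions drawn in the right plot of Figure \ref{graph}. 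Thus $I_\delta=I_\delta^{D}+I_\delta^{A}$, where $I_\delta^{D}=\int_{\{\eta>0\}}J_\delta(|\mathbf{x}-\mathbf{y}|)(\eta^2-s_\xb^2)\,d\mathbf{y}$ is the flat half-plane contribution (the $D_\delta$ integral extends over all of $\{\eta>0\}$ since $J_\delta$ vanishes outside the ball).

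For the flat term I would rescale by $\mathbf{u}=(\mathbf{y}-\mathbf{x})/\delta$ and set $c=s_\xb/\delta\in[0,1]$, $u_n=\mathbf{u}\cdot\mathbf{n}(\overline{\mathbf{x}})$; the powers of $\delta$ cancel and $I_\delta^{D}=\int_{\{u_n>c\}}J(|\mathbf{u}|)\,[(u_n-c)^2-c^2]\,d\mathbf{u}=:A-B$, a $\delta$-independent function of $c$ alone with $A=\int_{\{u_n>c\}}J(u_n-c)^2$ and $B=c^2\int_{\{u_n>c\}}J$. The analytic crux is a pair of elementary pointwise inequalities valid on $\{u_n>c\}$, namely $(u_n-c)^2\le u_n^2$ (because there $u_n\ge c\ge 0$, hence $u_n\ge c/2$) and $c^2\le u_n^2$. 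Combined with the normalization $\int_{\mathbb{R}^2}J(|\mathbf{u}|)u_n^2\,d\mathbf{u}=1$ (half of $\int J|\mathbf{u}|^2=2$ by the $u_p\leftrightarrow u_n$ symmetry) and $\int_{\{u_n>0\}}Ju_n^2=\tfrac12$, these give $0\le A\le\tfrac12$ and $0\le B\le\int_{\{u_n>c\}}Ju_n^2\le\tfrac12$. Hence $I_\delta^{D}=A-B\in[-\tfrac12,\tfrac12]$, which is precisely what produces the constants $\tfrac12$ and $\tfrac32$ in the statement.

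It then remains to absorb the curvature into the $O(\delta)$ remainder. For $I_\delta^{A}$ I would parametrize $A_\delta$ by $(\xi,\eta)$ with $\eta_\partial(\xi)=-\tfrac12\kappa(\overline{\mathbf{x}})\xi^2+O(\xi^3)$, which is legitimate since $\partial\Omega_{nl}$ is $C^3$ with uniformly bounded curvature; then $|\eta|\le|\eta_\partial(\xi)|\le C\xi^2\le C\delta^2$ on $A_\delta$, so $|\eta^2-s_\xb^2|\le s_\xb^2+\eta^2\le C\delta^2$, while $\|J_\delta\|_{L^\infty}\le C\delta^{-4}$ and $|A_\delta|=\int_{|\xi|\le\delta}|\eta_\partial(\xi)|\,d\xi\le C\delta^3$. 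Multiplying these three estimates yields $|I_\delta^{A}|\le C\delta^{-4}\cdot C\delta^2\cdot C\delta^3=C\delta$. Combining the two contributions gives $Q_\delta(\xb)=1-I_\delta^{D}-I_\delta^{A}\in[\tfrac12-C_q\delta,\tfrac32+C_q\delta]$, and choosing $\overline{\delta}$ small enough makes the lower bound strictly positive.

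I expect the principal obstacle to be the geometric bookkeeping that makes the decomposition $B(\mathbf{x},\delta)\backslash\Omega_{nl}=D_\delta\sqcup A_\delta$ rigorous and uniform in $\xb\in B\Omega_i$: one must verify, from convexity together with the $\epsilon$-neighborhood/$C^3$ hypotheses and the geometric assumption on $\{\mathbf{z}_1,\mathbf{z}_2\}=\partial\Omega_D\cap\Gamma_i$ stated above, that the tangent line at $\overline{\mathbf{x}}$ separates the half-disk cleanly, that the boundary stays on one side so that $A_\delta$ is a single sliver of the claimed $O(\delta^3)$ area, and that these facts persist for points $\xb$ whose projection $\overline{\mathbf{x}}$ lies near the junction of $\Gamma_i$ and $\partial\Omega_D$. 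By contrast, the analytic heart of the argument, the bound $I_\delta^{D}\in[-\tfrac12,\tfrac12]$, is short once the two pointwise inequalities on $\{u_n>c\}$ are noticed.
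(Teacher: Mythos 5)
Your proposal is correct and follows essentially the same route as the paper's own proof: the identical decomposition of $B(\mathbf{x},\delta)\backslash\Omega_{nl}$ into the tangent half-disk $D_\delta$ and the curvature sliver $A_\delta$, the same two pointwise inequalities (the paper writes them as $|(\mathbf{y}-\mathbf{x})\cdot\mathbf{n}(\overline{\mathbf{x}})|^2\geq|(\mathbf{x}-\overline{\mathbf{x}})\cdot\mathbf{n}(\overline{\mathbf{x}})|^2$ and $|(\mathbf{y}-\mathbf{x})\cdot\mathbf{n}(\overline{\mathbf{x}})|^2\geq|(\mathbf{y}-\overline{\mathbf{x}})\cdot\mathbf{n}(\overline{\mathbf{x}})|^2$, which are exactly your rescaled inequalities $u_n^2\geq c^2$ and $u_n^2\geq(u_n-c)^2$) yielding the $\pm\tfrac{1}{2}$ bound via the half-plane normalization, and the same $|A_\delta|\leq C\delta^3$ area estimate producing the $O(\delta)$ correction. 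The only caveat is that your sliver bound invokes $\|J_\delta\|_{L^\infty}\leq C\delta^{-4}$, which holds for bounded kernels such as $J^1_\delta$ but fails for kernels like $J^2_\delta$ that are singular at the origin; the paper sidesteps this by citing \cite{You2018}, and for the general kernel class one would instead bound the integrand by $C\delta^{-2}$ using the boundedness of $J(s)s^2$ implied by monotonicity and integrability of $J$.
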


\begin{proof}
As shown in the right plot of Figure \ref{graph}, we note that
\[Q_\delta(\mathbf{x})=1- 
\int_{D_\delta}  J_{\delta}(|\mathbf{x}-\mathbf{y}|)%
 \left[|(\mathbf{y}-\overline{\mathbf{x}})\cdot\mathbf{n}(\overline{\mathbf{x}})|^2-|(\mathbf{x}-\overline{\mathbf{x}})\cdot\mathbf{n}%
 (\overline{\mathbf{x}})|^2\right] d \mathbf{y}
-\int_{A_\delta}  J_{\delta}(|\mathbf{x}-\mathbf{y}|)%
 \left[|(\mathbf{y}-\overline{\mathbf{x}})\cdot\mathbf{n}(\overline{\mathbf{x}})|^2-|(\mathbf{x}-\overline{\mathbf{x}})\cdot\mathbf{n}%
 (\overline{\mathbf{x}})|^2\right] d \mathbf{y}.\]
With $\tau(\overline{\mathbf{x}})$ representing the tangent line to $\Gamma_i$
 at  $\overline{\mathbf{x}}$, here $D_\delta$ is the region of $B(\mathbf{x},\delta)$ on the side of $\tau(\overline{\mathbf{x}})$ {\em not} containing $\Omega_{nl}$ (as shown in the green region of Figure \ref{graph}), and $A_\delta:=B(\mathbf{x},\delta) \backslash (D_\delta\cup \Omega_{nl})$ (as shown in the cyan region of Figure \ref{graph}). We consider first the $D_\delta$ part. When $\mathbf{y}\in D_\delta$,  we note that $|(\mathbf{y}-{\mathbf{x}})\cdot\mathbf{n}(\overline{\mathbf{x}})|^2\geq|(\mathbf{x}-\overline{\mathbf{x}})\cdot\mathbf{n}(\overline{\mathbf{x}})|^2$ and $|(\mathbf{y}-{\mathbf{x}})\cdot\mathbf{n}(\overline{\mathbf{x}})|^2\geq|(\mathbf{y}-\overline{\mathbf{x}})\cdot\mathbf{n}(\overline{\mathbf{x}})|^2$, therefore
\begin{align}
 \nonumber &\int_{D_\delta}  J_\delta( |\mathbf{x}-\mathbf{y}| ) \left[|(\mathbf{y}-\overline{\mathbf{x}})\cdot\mathbf{n}(\overline{\mathbf{x}})|^2-|(\mathbf{x}-\overline{\mathbf{x}})\cdot\mathbf{n}%
 (\overline{\mathbf{x}})|^2\right] d \mathbf{y} \geq -\int_{D_\delta}  J_\delta( |\mathbf{x}-\mathbf{y}| ) |(\mathbf{y}-{\mathbf{x}})\cdot\mathbf{n}(\overline{\mathbf{x}})|^2 d \mathbf{y}\geq -\dfrac{1}{2}.\\
 \nonumber &\int_{D_\delta}  J_\delta( |\mathbf{x}-\mathbf{y}| ) \left[|(\mathbf{y}-\overline{\mathbf{x}})\cdot\mathbf{n}(\overline{\mathbf{x}})|^2-|(\mathbf{x}-\overline{\mathbf{x}})\cdot\mathbf{n}%
 (\overline{\mathbf{x}})|^2\right] d \mathbf{y} \leq\int_{D_\delta}  J_\delta( |\mathbf{x}-\mathbf{y}| ) |(\mathbf{y}-\overline{\mathbf{x}})\cdot\mathbf{n}(\overline{\mathbf{x}})|^2 d \mathbf{y}\leq \dfrac{1}{2}.
\end{align} 
For the $A_\delta$ region, similar as in \cite[Lemma~3.1]{You2018}, it can be shown that the area of $A_\delta$ satisfies $|A_\delta|\leq C \delta^3$ since $\Omega_{nl}$ is a $C^3$ domain. Hence there exists a $\overline{\delta}$ such that for $\delta<\overline{\delta}$,
$$\left|\int_{A_\delta}  J_{\delta}(|\mathbf{x}-\mathbf{y}|)%
 \left[|(\mathbf{y}-\overline{\mathbf{x}})\cdot\mathbf{n}(\overline{\mathbf{x}})|^2-|(\mathbf{x}-\overline{\mathbf{x}})\cdot\mathbf{n}%
 (\overline{\mathbf{x}})|^2\right] d \mathbf{y}\right|\leq C_q\delta,$$
and $\dfrac{1}{2}-C_q\delta\leq Q_\delta(\xb)\leq \dfrac{3}{2}+C_q\delta$.
\end{proof}
For problem \eqref{eqn:nonlocaleqn} we have the nonlocal maximum principle stated below
\begin{lem}\label{eqn:maxprinciple}
{ For $u\in C^1(0,T;C(\overline{\Omega_{nl}})\cap C(B\Omega_{d}))\cup C([0,T]\times \overline{\Omega_{nl}})$, $u(\xb,t)$ bounded on $(\xb,t)\in B\Omega_{d}\times [0,T]$, assuming that $u$ satisfies $\dot{u}-\alpha_{nl}L_\delta u\leq 0$ for all $x\in \Omega_{nl}\backslash B\Omega_{i}$ and $Q_\delta \dot{u}-\alpha_{nl}L_{N\delta} u\leq 0$ for all $x\in B\Omega_{i}$, we have
\begin{equation}\label{eqn:maxp}
 \sup_{(\mathbf{x},t)\in (\overline{\Omega_{nl}}\cup B\Omega_{d}) \times [0,T]} u(\mathbf{x},t)\leq \max\left\{\sup_{\mathbf{x}\in \overline{\Omega_{nl}}\cup B\Omega_{d}} u(\mathbf{x},0),\sup_{(\mathbf{x},t)\in B\Omega_{d}\times[0,T]} u(\mathbf{x},t)\right\}. 
\end{equation}
}
\end{lem}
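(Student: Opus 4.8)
The plan is to adapt the classical parabolic weak maximum principle to the nonlocal, two-region setting of problem \eqref{eqn:nonlocaleqn}. The strategy has three ingredients: a linear-in-time perturbation that upgrades the two hypotheses into \emph{strict} differential inequalities, a compactness/continuity argument that locates a global space-time maximum, and the observation that at a spatial maximum the nonlocal difference operators are nonpositive, which collides with the strict inequalities.

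First I would fix $\epsilon>0$ and set $v_\epsilon(\xb,t):=u(\xb,t)-\epsilon t$. The crucial structural fact is that both $L_\delta$ (from \eqref{eqn:nonlocaldelta}) and $L_{N\delta}$ (from \eqref{eqn:LN}, read in the difference form in which it enters \eqref{eqn:nonlocaleqn}) are built from the nonnegative kernels $J_\delta\geq0$ and $H_\delta\geq0$, the latter weighted by $M_\delta$, and that they annihilate functions that are constant in space; hence $L_\delta(\epsilon t)=L_{N\delta}(\epsilon t)=0$. Therefore $v_\epsilon$ satisfies the hypotheses in strict form,
\begin{align*}
\dot v_\epsilon-\alpha_{nl}L_\delta v_\epsilon\leq-\epsilon<0 \text{ on } \Omega_{nl}\backslash B\Omega_i,\qquad
Q_\delta\dot v_\epsilon-\alpha_{nl}L_{N\delta}v_\epsilon\leq-Q_\delta\epsilon<0 \text{ on } B\Omega_i,
\end{align*}
where the strictness of the second inequality hinges on the lower bound $Q_\delta(\xb)\geq\frac12-C_q\delta>0$ from Lemma~\ref{lem:Q}.

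Next I would take the global maximum of $v_\epsilon$ over the compact set $(\overline{\Omega_{nl}}\cup B\Omega_d)\times[0,T]$. If that value is dominated by the Dirichlet collar $B\Omega_d\times[0,T]$ or by the initial slice $t=0$, it is already bounded by the right-hand side of \eqref{eqn:maxp} and we are done. Otherwise, by continuity of $u$ on $\overline{\Omega_{nl}}\times[0,T]$ the maximum is attained at some $(\xb_*,t_*)$ with $\xb_*\in\overline{\Omega_{nl}}$, $t_*\in(0,T]$. The $C^1$-in-time regularity gives $\dot v_\epsilon(\xb_*,t_*)\geq0$ (vanishing if $t_*$ is interior, nonnegative at a right-endpoint maximum). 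Since $v_\epsilon(\yb,t_*)\leq v_\epsilon(\xb_*,t_*)$ for every $\yb$ appearing in the integrals, the nonnegativity of $J_\delta$, $H_\delta$ and of the weight $M_\delta$ (recall $0\leq M_\delta\leq C$) forces $L_\delta v_\epsilon(\xb_*,t_*)\leq0$ and $L_{N\delta}v_\epsilon(\xb_*,t_*)\leq0$. Combined with $\alpha_{nl}>0$ and $Q_\delta>0$, this makes the left-hand sides above nonnegative at $(\xb_*,t_*)$, contradicting their strict negativity. So the interior case cannot occur, $\sup v_\epsilon$ is bounded by the right-hand side of \eqref{eqn:maxp}, and letting $\epsilon\to0$ finishes the argument.

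The main obstacle I expect is a clean treatment of the boundary and interface of $\Omega_{nl}$. One must confirm the maximum-point step still applies when $\xb_*$ lands on $\Gamma_i$, or when the interaction ball $B(\xb_*,\delta)$ reaches into the Dirichlet collar $B\Omega_d$. The latter is harmless, since all such $\yb$ still lie in $\overline{\Omega_{nl}}\cup B\Omega_d$ where the global bound $v_\epsilon(\yb,t_*)\leq v_\epsilon(\xb_*,t_*)$ holds; the former requires extending the $B\Omega_i$ inequality to $\overline{B\Omega_i}$ by continuity of $u$, $Q_\delta$, $M_\delta$ and of the operators up to $\Gamma_i$. The second delicate point is structural rather than technical: the perturbation argument on $B\Omega_i$ degenerates unless $Q_\delta$ stays bounded away from zero, which is precisely where Lemma~\ref{lem:Q} is indispensable.
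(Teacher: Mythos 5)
Your proof is correct, but it takes a genuinely different route from the paper's. The paper argues by contradiction with no perturbation: if the maximum were attained at some $(\mathbf{x}^*,t^*)$ with $\mathbf{x}^*\in\Omega_{nl}\cup\Gamma_i$, $t^*\in(0,T]$, then the sign facts $\dot u(\mathbf{x}^*,t^*)\ge 0$ and $-\alpha_{nl}\mcL_{\delta}u(\mathbf{x}^*,t^*)\ge 0$ (respectively with $Q_\delta$ and $\mcL_{N\delta}$ on $B\Omega_i$), combined with the hypothesized inequalities, force both terms to vanish; vanishing of the nonlocal operator, whose integrand is sign-definite at a maximum, then forces $u(\cdot,t^*)$ to equal the maximal value on the whole interaction neighborhood, and iterating this propagation over the connected set $\overline{\Omega_{nl}}\cup B\Omega_d$ shows $u(\cdot,t^*)$ would have to be constant all the way into the Dirichlet collar, contradicting the assumed strict excess over the data. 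You instead use the classical perturbation $v_\epsilon=u-\epsilon t$ to upgrade the hypotheses to strict inequalities and derive a contradiction at a single maximum point, with no propagation step at all. The trade-offs are real: the paper's route needs only $Q_\delta\ge 0$ and delivers a strong-maximum-principle-type conclusion (spatial constancy at $t^*$), but it leans on connectedness of the domain and on an iteration (``repeat until the region expands to the entire domain'') that is left informal; your route is more elementary and purely local, sidesteps connectivity, and treats the delicate attainment issues more explicitly --- the case $\mathbf{x}_*\in\Gamma_i$ handled by extending the $B\Omega_i$ inequality to $\overline{B\Omega_i}$ by continuity, and the collar $B\Omega_d$, where $u$ is only bounded, handled through the dichotomy on where the supremum lives --- at the price of requiring the strict lower bound $Q_\delta\ge\tfrac12-C_q\delta>0$ of Lemma~\ref{lem:Q}, hence $\delta$ sufficiently small (the paper's own proof implicitly needs the weaker $Q_\delta\ge0$ from the same lemma, so this is not a substantive extra restriction). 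Both arguments rest on the same structural facts --- nonnegativity of $J_\delta$, $H_\delta$, $M_\delta$ and reading \eqref{eqn:LN} as a difference operator despite the sign typo there, which you correctly flagged --- so your proof stands as a valid, and in places tighter, alternative to the one in the paper.
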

\begin{proof}
Assuming that \eqref{eqn:maxp} doesn't hold true, then there exists $(\mathbf{x}^*,t^*)\in (\Omega_{nl}\cup\Gamma_i)\times (0,T]$ such that $u(\mathbf{x}^*,t^*)$ achieves the maximum. One can then obtain a contradiction following a similar argument as \cite{You2018}:



Case 1: if $\xb^*\in \Omega_{nl}\backslash B\Omega_i$, then
$\dot{u}(\xb^*,t^*)\geq 0$, $-\alpha_{nl}\mcL_{\delta} u(\xb^*,t^*)\geq 0$. Therefore, $\dot{u}(\xb^*,t^*)=0$ and $\mcL_{\delta} u(\xb^*,t^*)=0$. For all $\mathbf{y}\in (\overline{\Omega_{nl}}\cup B\Omega_{d})\cap B(\mathbf{x}^*,\delta)$,  $u(\mathbf{y},t^*)=u(\mathbf{x}^*,t^*)$ achieves the maximum.

Case 2: if $\xb^*\in B\Omega_i$, then $Q_\delta(\xb)\dot{u}(\xb,t)\geq 0$, $-\alpha_{nl}\mcL_{N\delta} u(\xb,t)\geq 0$. Therefore, $Q_\delta(\xb)\dot{u}(\xb,t)=-\alpha_{nl}\mcL_{N\delta} u(\xb,t)=0$, and $u(\mathbf{y},t^*)=u(\mathbf{x}^*,t^*)$ achieves the maximum for all $\mathbf{y}\in \overline{\Omega_{nl}}\cap B(\mathbf{x}^*,\delta)$. 

We then apply the same arguments with $\mathbf{y}$ in place of $\mathbf{x}^*$. This process can be repeated until the region where $u(\mathbf{z},t^*)=\underset{\overline{\Omega_{nl}}\cup B\Omega_{d}}\sup u$  expands to the entire domain of $\overline{\Omega_{nl}}\cup B\Omega_{d}$. In other words, to have a global maximum inside $\Omega_{nl}$, the only possibility is for $u(\cdot,t^*)$ to be constant on $\overline{\Omega_{nl}}\cup B\Omega_{d}$, which contradicts with the assumption.
\end{proof}

Moreover, when considering a semi-discretized problem with backward Euler method:
\begin{align}
\nonumber&\dfrac{1}{\Delta t}({u}^{k+1}_{nl,\delta}(\xb)-{u}^{k}_{nl,\delta}(\xb))-\alpha_{nl}\mcL_{\delta} u^{k+1}_{nl,\delta}(\xb)=f_{nl}(\xb,t^{k+1}),\quad \mathbf{x}\in\Omega_{nl}\backslash B\Omega_{i}\\
\nonumber&\dfrac{1}{\Delta t}Q_\delta(\xb)({u}^{k+1}_{nl,\delta}(\xb)-{u}^{k}_{nl,\delta}(\xb))-\alpha_{nl}\mcL_{N\delta} u^{k+1}_{nl,\delta}(\xb)=Q_\delta(\xb)f_{nl}(\xb,t^{k+1})+\alpha_{nl}V_\delta (\xb)g(\overline{\mathbf{x}},t^{k+1}),\quad \mathbf{x}\in B\Omega_{i}\\
\nonumber&u^{k+1}_{nl,\delta}(\xb)=u^D_{nl}(\xb,t^{k+1}),\quad \mathbf{x}\in B\Omega_{d}\\
&u^0_{nl,\delta}(\xb)=u^{IC}(\xb).\quad \mathbf{x}\in\Omega_{nl}\label{eqn:nonlocaleqnBE}
\end{align}
the nonlocal maximum principle also holds true:
\begin{lem}\label{eqn:maxprincipleBE}
{ For a sequence of semi-discretized solutions $\{u^k(\xb)\}$, $k=0,1,\cdots,M$ where $M=\dfrac{T}{\Delta t}$, $u^k\in C(\overline{\Omega_{nl}})\cap C(B\Omega_{d}))$, $u(\xb,0)$ is bounded on $\xb\in\Omega_{nl}$ and $u^k(\xb)$ is bounded on $\xb\in B\Omega_{d}$, assuming that $u^k$ satisfies $\dfrac{1}{\Delta t}({u}^{k+1}-u^k)-\alpha_{nl}L_\delta u^{k+1}\leq 0$ for all $x\in \Omega_{nl}\backslash B\Omega_{i}$ and $\dfrac{1}{\Delta t}Q_\delta ({u}^{k+1}-u^k)-\alpha_{nl}L_{N\delta} u^{k+1}\leq 0$ for all $x\in B\Omega_{i}$, we have
\begin{equation}\label{eqn:maxpBE}
 \max_{k=0}^{M}\sup_{\mathbf{x}\in \overline{\Omega_{nl}}\cup B\Omega_{d}} u^k(\mathbf{x})\leq \max\left\{\sup_{\mathbf{x}\in \overline{\Omega_{nl}}\cup B\Omega_{d}} u(\mathbf{x},0),\max_{k=0}^{M}\sup_{\mathbf{x}\in B\Omega_{d}} u^k(\mathbf{x})\right\}. 
\end{equation}
}
\end{lem}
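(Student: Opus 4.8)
The plan is to establish \eqref{eqn:maxpBE} as the fully discrete analogue of Lemma \ref{eqn:maxprinciple}, replacing the contradiction-and-propagation argument used there by a direct induction on the time index $k$. Induction is natural here because the backward Euler scheme couples only consecutive time levels. Write $m_k := \sup_{\mathbf{x}\in\overline{\Omega_{nl}}\cup B\Omega_d} u^k(\mathbf{x})$ and $M_{bd}:=\max_{0\le k\le M}\sup_{\mathbf{x}\in B\Omega_d} u^k(\mathbf{x})$. Since $\overline{\Omega_{nl}}\cup B\Omega_d$ is compact and each $u^k$ is continuous, every supremum is attained, so $m_{k+1}=u^{k+1}(\mathbf{x}^*)$ for some $\mathbf{x}^*$. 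It then suffices to prove the one-step bound $m_{k+1}\le\max\{m_k,M_{bd}\}$; iterating from $k=0$ and noting that $m_0=\sup_{\mathbf{x}\in\overline{\Omega_{nl}}\cup B\Omega_d}u(\mathbf{x},0)$ yields \eqref{eqn:maxpBE}, whose left-hand side is exactly $\max_{0\le k\le M} m_k$.

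For the one-step bound, fix $k$ and let $\mathbf{x}^*$ attain $m_{k+1}$. If $\mathbf{x}^*\in B\Omega_d$ then $m_{k+1}\le M_{bd}$ and we are done, so assume $\mathbf{x}^*\in\Omega_{nl}\cup\Gamma_i$, and split the argument as in the proof of Lemma \ref{eqn:maxprinciple}. If $\mathbf{x}^*\in\Omega_{nl}\backslash B\Omega_i$, then because $\mathbf{x}^*$ is a global spatial maximum of $u^{k+1}$ over $\overline{\Omega_{nl}}\cup B\Omega_d$, every point $\mathbf{y}$ entering $\mcL_\delta u^{k+1}(\mathbf{x}^*)$ satisfies $u^{k+1}(\mathbf{y})\le u^{k+1}(\mathbf{x}^*)$, whence $L_\delta u^{k+1}(\mathbf{x}^*)\le 0$. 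The first differential inequality then gives $\frac{1}{\Delta t}(u^{k+1}(\mathbf{x}^*)-u^k(\mathbf{x}^*))\le\alpha_{nl}L_\delta u^{k+1}(\mathbf{x}^*)\le 0$, so $m_{k+1}=u^{k+1}(\mathbf{x}^*)\le u^k(\mathbf{x}^*)\le m_k$. If instead $\mathbf{x}^*\in B\Omega_i$, the same sign reasoning applied to the averaging difference terms of $L_{N\delta}$ in \eqref{eqn:LN}, together with $M_\delta(\mathbf{x}^*)\ge 0$, gives $L_{N\delta}u^{k+1}(\mathbf{x}^*)\le 0$; invoking the strict positivity $Q_\delta(\mathbf{x}^*)>0$ from Lemma \ref{lem:Q}, the second inequality yields $\frac{1}{\Delta t}Q_\delta(\mathbf{x}^*)(u^{k+1}(\mathbf{x}^*)-u^k(\mathbf{x}^*))\le 0$ and hence again $m_{k+1}\le m_k$. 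Combining the two interior cases with the boundary case gives $m_{k+1}\le\max\{m_k,M_{bd}\}$, closing the induction.

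The only genuinely technical point — and the step I would treat most carefully — is the nonpositivity of $L_{N\delta}u^{k+1}$ at an interior maximum lying in $B\Omega_i$. Here one must verify that both regions seen by $L_{N\delta}$, namely the bulk integral over $\Omega_{nl}$ and the line integral along the level-set contour $\Gamma(\mathbf{x}^*)$ parametrized by $\mathbf{x}_l$, lie inside $\overline{\Omega_{nl}}$, so that maximality of $u^{k+1}(\mathbf{x}^*)$ forces each difference to be nonpositive; this is precisely where the geometric setup of Section \ref{sec:robinmath} and the bound $0\le M_\delta(\mathbf{x})\le C$ from \cite[Lemma~3.1]{You2018} enter. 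Beyond this, I expect no obstacle not already present in the continuous case: the backward Euler differencing contributes only the harmless term $-u^k(\mathbf{x}^*)/\Delta t$, and the positivity of $Q_\delta$ needed to divide through is guaranteed by Lemma \ref{lem:Q} for $\delta\le\overline{\delta}$. Thus the discrete result follows from the same structural facts as its time-continuous counterpart.
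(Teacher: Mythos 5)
Your proof is correct, and it takes a genuinely different route from the paper's. The paper disposes of this lemma in one line, asserting that the contradiction-and-propagation argument of Lemma \ref{eqn:maxprinciple} carries over: one assumes an interior maximum exceeding the right-hand side of \eqref{eqn:maxpBE}, deduces from the sign structure that $\mcL_{\delta}u$ (resp.\ $\mcL_{N\delta}u$) vanishes there, concludes the solution is constant on a ball, and propagates this constancy region until it reaches $B\Omega_d$ --- a step that relies on the connectedness of $\Omega_{nl}$. You instead run a direct induction in $k$: at a spatial maximum $\mathbf{x}^*$ of $u^{k+1}$ the nonlocal terms are nonpositive (using $M_\delta\geq 0$, and $Q_\delta>0$ from Lemma \ref{lem:Q}, which both arguments need and which requires $\delta\leq\overline{\delta}$), so the backward-Euler differencing forces $u^{k+1}(\mathbf{x}^*)\leq u^{k}(\mathbf{x}^*)$, giving the one-step bound $m_{k+1}\leq\max\{m_k,M_{bd}\}$ in your notation, and iteration closes the proof. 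Your route avoids contradiction, avoids the propagation step and hence any appeal to connectedness, and yields a slightly stronger, level-by-level monotonicity statement; the paper's route has the advantage of being verbatim the same proof in the continuous and semi-discrete settings. Two shared caveats, which affect your argument no more than the paper's: first, the displayed formula \eqref{eqn:LN} has a sign typo (the first integrand should be the difference $u(\mathbf{y},t)-u(\mathbf{x},t)$, as in the derivation and in \eqref{eqn:formula4}), and your ``averaging difference'' reading is the intended one; second, both case splits omit the possibility that the maximum is attained only on $\Gamma_i$ itself, where no equation is posed --- this is repaired by continuity of $u^{k+1}$, of $Q_\delta$, and of the integral operators up to $\Gamma_i$, passing to the limit in the discrete inequality along interior points of $B\Omega_i$.
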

\begin{proof}
The argument is similarly obtained as in the proof of Lemma \ref{eqn:maxprinciple}.
\end{proof}

We now assume that $u^k_{nl,\delta}$ is the solution of \eqref{eqn:nonlocaleqnBE} and $u^k_0$ is the solution of semi-discretized local problem \eqref{eqn:locallimit} with the backward Euler method. Denote $e^k_\delta(\mathbf{x}):={u}^k_{nl,\delta}(\mathbf{x})-u^k_0(\mathbf{x})$ and
\begin{align*}
&T^k_\delta(\mathbf{x}):=\alpha_{nl}(-\Delta u^k_0(\mathbf{x})+L_\delta u^k_0(\mathbf{x})), &\text{ for } \mathbf{x}\in\Omega_{nl}\backslash B\Omega_{i},\\ &T^k_\delta(\mathbf{x}):=\alpha_{nl}(-\Delta u^k_0(\mathbf{x},t)+L_{N\delta} u^k_0(\mathbf{x}))+(Q_\delta(\xb)-1)\left(f_{nl}(\mathbf{x},t^k)-\dfrac{1}{\Delta t}[{u}^k_0(\mathbf{x})-{u}^{k-1}_0(\mathbf{x})]\right), &\text{ for } \mathbf{x}\in B\Omega_{i}.
\end{align*}
Then for $\mathbf{x}\in\Omega_{nl}\backslash B\Omega_{i}$, 
$$\dfrac{1}{\Delta t}({e}^k_\delta(\mathbf{x})-{e}^{k-1}_\delta(\mathbf{x}))-\alpha_{nl}L_\delta e^k_\delta(\mathbf{x})=\alpha_{nl}(-\Delta u^k_0(\mathbf{x})+L_\delta u^k_0(\mathbf{x}))=T^k_\delta(\mathbf{x}),$$
and for $\mathbf{x}\in B\Omega_{i}$, 
$$\dfrac{1}{\Delta t}Q_\delta(\mathbf{x})({e}^k_\delta(\mathbf{x})-{e}^{k-1}_\delta(\mathbf{x}))-\alpha_{nl}L_{N\delta} e^k_\delta(\mathbf{x})=Q_\delta(\mathbf{x})f_{nl}(\xb,t^k)-\dfrac{1}{\Delta t}Q_\delta(\mathbf{x})({u}^k_0(\mathbf{x})-{u}^{k-1}_0(\mathbf{x}))+\alpha_{nl}L_{N\delta} u^k_0(\mathbf{x})=T^k_\delta(\mathbf{x}).$$
In the following we take a specific kernel $J_{\delta}(s)=J^1_{\delta}(s)=\dfrac{4}{\pi\delta^4}$ for $s\leq\delta$ for simplicity. With Taylor expansion we can obtain the following truncation estimate for $T_\delta$:
\begin{lem}\label{lem:T}
Suppose $u^k_0$ is the solution to semi-discretized local problem \eqref{eqn:locallimit}, then
\begin{align*}
 |T^k_\delta(\mathbf{x})|\leq &C(T)(\delta^2),\qquad \text{for }\mathbf{x}\in\Omega_{nl}\backslash B\Omega_{i},\\
 |T^k_\delta(\mathbf{x})|\ \leq &C(T) [\delta - s_\xb]^{3/2}\delta^{-1/2}+\mcO(\delta^2),\qquad \text{for }\mathbf{x}\in B\Omega_{i},
\end{align*}
where $C(T)$ is independent of $\delta$ but might depend on $T$.
\end{lem}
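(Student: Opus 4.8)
The plan is to prove the two bounds separately, since the interior and boundary-layer estimates come from genuinely different mechanisms. For $\mathbf{x}\in\Omega_{nl}\setminus B\Omega_{i}$ the truncation error reduces to $T^k_\delta(\mathbf{x})=\alpha_{nl}(L_\delta u^k_0(\mathbf{x})-\Delta u^k_0(\mathbf{x}))$, i.e. the plain consistency error of the nonlocal Laplacian. For such points the ball $B(\mathbf{x},\delta)$ meets only regions where $u^k_0$ is smooth (the interior, or the Dirichlet layer where $u\equiv0$), so \eqref{eq:local-limit} applies directly and yields $|T^k_\delta(\mathbf{x})|\le C\delta^2\|{\rm D}^{(4)}u^k_0\|_{L^\infty}$. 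The one extra ingredient is a uniform-in-$k$ bound on the spatial derivatives of the backward-Euler iterates through fourth order; this follows from parabolic regularity for \eqref{eqn:locallimit} together with the stability of the time scheme, and is the origin of the constant $C(T)$, which may grow with the final time.

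For $\mathbf{x}\in B\Omega_{i}$ I would start from $T^k_\delta(\mathbf{x})=\alpha_{nl}(L_{N\delta}u^k_0(\mathbf{x})-\Delta u^k_0(\mathbf{x}))+(Q_\delta(\mathbf{x})-1)\left(f_{nl}-\tfrac{1}{\Delta t}(u^k_0-u^{k-1}_0)\right)$ and rewrite $L_{N\delta}u^k_0$ as the full-ball operator minus its exterior part $2\int_{\mathbb{R}^2\setminus\Omega_{nl}}J_\delta(u^k_0(\mathbf{y})-u^k_0(\mathbf{x}))d\mathbf{y}$, which the construction replaces by the terms built from $M_\delta$, $V_\delta$ and the local extensions of $g$ (here $g\equiv0$). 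A crucial simplification is that $u^k_0$ solves the semi-discrete local equation exactly, so the substitution $\tfrac{1}{\Delta t}(u^k_0-u^{k-1}_0)-f_{nl}=\alpha_{nl}\Delta u^k_0$ used inside the extensions is an identity, not an approximation. Taylor-expanding $u^k_0$ about $\mathbf{x}$ to fourth order and resolving the Hessian into its $[u^k_0]_{nn}$, $[u^k_0]_{pp}$ and $[u^k_0]_{pn}$ parts, I expect the design to annihilate every contribution through second order: the tangential part is absorbed by the one-dimensional $H_\delta$-operator weighted by $M_\delta$ (incurring only its own $\mcO(\delta^2)$ error, harmless because $0\le M_\delta\le C$ as established above), while the normal part is eliminated through $Q_\delta$ and the PDE substitution. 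Verifying this exact second-order cancellation—an algebraic identity among \eqref{eqn:Q}, \eqref{eqn:S}, \eqref{eqn:M} and the extension formulas—is the bookkeeping core of the argument.

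The surviving error has two sources. The dominant one is the odd, third-order Taylor terms: over the full ball they vanish by symmetry, but over the asymmetric exterior cap $B(\mathbf{x},\delta)\setminus\Omega_{nl}$ they do not. The key geometric estimate is that this cap is a circular segment cut at distance $s_\xb$ from the center, so its area scales like $(\delta-s_\xb)^{3/2}\delta^{1/2}$; pairing the kernel $J_\delta\sim\delta^{-4}$ with the third-order moment $\sim\delta^3(\delta-s_\xb)^{3/2}\delta^{1/2}$ over this region produces precisely the stated term $C(T)[\delta-s_\xb]^{3/2}\delta^{-1/2}$. The second source is the mismatch between the true curved exterior and the flat tangent half-plane used in these moment computations; it is supported on the thin set $A_\delta$ with $|A_\delta|\lesssim\delta^3$ (as in Lemma \ref{lem:Q}), its leading curvature effect is cancelled by the $\kappa(\overline{\mathbf{x}})$ terms built into $V_\delta$ and $M_\delta$, and the $C^3$ regularity of $\Gamma_i$ pushes the remainder, together with the fourth-order Taylor terms, into the $\mcO(\delta^2)$ bucket. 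Collecting terms and invoking the uniform derivative bounds for $u^k_0$ then gives the two displayed estimates.

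The main obstacle is the second-order cancellation in the boundary layer: the formulation is engineered so that the zeroth-, first-, and second-order pieces vanish identically, but confirming this requires computing several directional moments of $J_\delta$ over the exterior cap and matching them against the precise definitions of $Q_\delta$, $M_\delta$ and $V_\delta$. The accompanying geometric work—the circular-segment area estimate that pins down the $s_\xb$-dependence, and the verification that the curvature correction is genuinely subdominant under the $C^3$ hypothesis—is technically delicate but elementary once the cancellation is correctly organized.
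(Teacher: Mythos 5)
Your overall route coincides with the paper's: the paper's proof of this lemma is essentially a pointer to \cite[Lemma~4.2]{You2018} (``Taylor expansion of $u_0$ and an estimate for the asymmetric part''), and your outline --- interior consistency via \eqref{eq:local-limit} plus uniform derivative bounds on the backward-Euler iterates; in $B\Omega_i$ reducing $T^k_\delta$ to $\alpha_{nl}\left(L_{N\delta}u^k_0-Q_\delta\Delta u^k_0\right)$ through the exact semi-discrete identity, then Taylor expanding and estimating the asymmetric exterior cap, whose circular-segment area $\sim(\delta-s_\xb)^{3/2}\delta^{1/2}$ paired with $J_\delta\sim\delta^{-4}$ and third-order moments produces the $(\delta-s_\xb)^{3/2}\delta^{-1/2}$ term --- is exactly that argument, spelled out in more detail than the paper provides.

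There is, however, one concrete step in your accounting that fails for curved interfaces, which is precisely the case the $C^3$ hypothesis is meant to cover. You claim the $M_\delta$-weighted one-dimensional $H_\delta$ operator incurs ``only its own $\mcO(\delta^2)$ error, harmless because $0\le M_\delta\le C$.'' That is true only when $\Gamma(\mathbf{x})$ is a straight line. Along a curved parallel contour the second arc-length derivative is not $[u^k_0(\mathbf{x})]_{pp}$ but $[u^k_0(\mathbf{x})]_{pp}\pm\kappa\,\partial_{\mathbf{n}}u^k_0(\mathbf{x})$, and since the homogeneous Neumann condition gives $\partial_{\mathbf{n}}u^k_0(\mathbf{x})=-s_\xb\,[u^k_0(\overline{\mathbf{x}})]_{nn}+\mcO(\delta^2)$, the contour operator carries an $\mcO(\kappa\, s_\xb)=\mcO(\delta)$ discrepancy. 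With only the crude bound $M_\delta\le C$ this contributes $\mcO(\delta)$, which does \emph{not} fit inside $C(T)[\delta-s_\xb]^{3/2}\delta^{-1/2}+\mcO(\delta^2)$ (take $s_\xb$ close to $\delta$, where the right-hand side degenerates to $\mcO(\delta^2)$). Your proposed rescue --- cancellation by ``the $\kappa(\overline{\mathbf{x}})$ terms built into $V_\delta$ and $M_\delta$'' --- is vacuous here: this lemma is stated in the homogeneous setting $g\equiv 0$, the only curvature term in the formulation (the $M_\delta\kappa$ term inside $V_\delta$) multiplies $g(\overline{\mathbf{x}})$ and hence drops out, and $M_\delta$ itself contains no curvature correction. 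The repair stays within your framework but requires the refined moment estimate $M_\delta(\mathbf{x})\lesssim(\delta-s_\xb)^{3/2}\delta^{-3/2}$, reflecting that the exterior cap shrinks as $s_\xb\to\delta$; combined with the factor $s_\xb\le\delta$ in the curvature coupling, this places the contribution inside the $(\delta-s_\xb)^{3/2}\delta^{-1/2}$ envelope. Boundedness of $M_\delta$ alone is not enough, so this refinement must be added to make your boundary-layer estimate close.
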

\begin{proof}
The proof is based on the Taylor expansion of $u_0$ and an estimate for the asymmetric part in $A_\delta$, similar as in \cite[Lemma~4.2]{You2018}.
\end{proof}

Furthermore, with the maximum principle in Lemma \ref{eqn:maxprincipleBE}, when $f_{nl}$ and $u^D_{nl}$ are both continuous we have the following lemma. 
\begin{lem}\label{thm:phi}
Suppose that a nonnegative continuous function $\phi(\mathbf{x})$ is defined on $\overline{\Omega_{nl}}$, and $L_\delta\phi\geq G(\mathbf{x})>0$ %
for $\mathbf{x}\in \Omega_{nl}\backslash B\Omega_{i}$, $L_{N\delta}\phi\geq G(\mathbf{x})>0$ for $\mathbf{x}\in B\Omega_{i}$. Then
\begin{equation}
\max_{k=0}^{M}\sup_{\mathbf{x}\in \overline{\Omega_{nl}}}|e^k_\delta(\mathbf{x})|\leq \sup_{\mathbf{x}\in \overline{\Omega_{nl}}\cup B\Omega_d}\phi(\mathbf{x}) \max_{k=0}^{M}\sup_{\mathbf{x}\in \overline{\Omega_{nl}}}\dfrac{|T^k_\delta(\mathbf{x})|}{G(\mathbf{x})}.
\end{equation}
\end{lem}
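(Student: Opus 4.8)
The plan is to run a discrete comparison (barrier) argument built on the maximum principle of Lemma \ref{eqn:maxprincipleBE}. First I would set
\[
K:=\max_{k=0}^{M}\sup_{\mathbf{x}\in\overline{\Omega_{nl}}}\frac{|T^k_\delta(\mathbf{x})|}{G(\mathbf{x})},
\]
so that by definition $|T^k_\delta(\mathbf{x})|\leq K\,G(\mathbf{x})$ for every $k$ and every $\mathbf{x}$, and introduce the two comparison functions $w^k_{\pm}(\mathbf{x}):=K\phi(\mathbf{x})\pm e^k_\delta(\mathbf{x})$. The goal is to show that each $w^k_{\pm}$ satisfies the subsolution hypothesis of Lemma \ref{eqn:maxprincipleBE}; the bound on $|e^k_\delta|$ will then fall out by reading off the values of $w^k_{\pm}$ on the parabolic boundary.

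The heart of the argument is the defect computation. Because $\phi$ is time-independent, for $\mathbf{x}\in\Omega_{nl}\backslash B\Omega_i$ the difference quotient of the barrier part drops out and the error equation gives
\[
\frac{1}{\Delta t}\bigl(w^k_{\pm}-w^{k-1}_{\pm}\bigr)-\alpha_{nl}L_\delta w^k_{\pm}
=\pm T^k_\delta-\alpha_{nl}KL_\delta\phi .
\]
Invoking the hypothesis $L_\delta\phi\geq G>0$ together with $\pm T^k_\delta\leq|T^k_\delta|\leq KG$ shows the barrier defect $\alpha_{nl}KL_\delta\phi$ dominates the truncation term, so the right-hand side is $\leq 0$, i.e. $w^k_{\pm}$ is a subsolution on $\Omega_{nl}\backslash B\Omega_i$. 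On the interface collar $\mathbf{x}\in B\Omega_i$ the time derivative carries the weight $Q_\delta(\mathbf{x})$; here I would use that the time-independence of $\phi$ makes the $Q_\delta$-weighted difference of the barrier part vanish identically, and that (in the homogeneous Neumann analysis) the $V_\delta g$ term is absent since $g=0$. The identical computation with $L_{N\delta}$ in place of $L_\delta$ and the second hypothesis $L_{N\delta}\phi\geq G$ then yields the same subsolution inequality; the factor $Q_\delta$ stays harmless and well-defined thanks to the two-sided bounds of Lemma \ref{lem:Q}.

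It remains to check the parabolic-boundary data and conclude. Since $u^k_{nl,\delta}$ and $u^k_0$ share the same initial condition, $e^0_\delta\equiv 0$, and since both obey the same Dirichlet data, $e^k_\delta\equiv 0$ on $B\Omega_d$; combined with $\phi\geq 0$ this gives $w^k_{\pm}=K\phi\geq 0$ on the whole parabolic boundary. Applying Lemma \ref{eqn:maxprincipleBE} to the two subsolutions $w^k_{\pm}$ therefore yields
\[
\max_{k=0}^{M}\sup_{\mathbf{x}\in\overline{\Omega_{nl}}}\bigl(K\phi(\mathbf{x})\pm e^k_\delta(\mathbf{x})\bigr)
\leq K\sup_{\mathbf{x}\in\overline{\Omega_{nl}}\cup B\Omega_d}\phi(\mathbf{x}),
\]
whence $\pm e^k_\delta(\mathbf{x})\leq K\bigl(\sup\phi-\phi(\mathbf{x})\bigr)\leq K\sup\phi$, and taking the supremum in $\mathbf{x}$ and the maximum in $k$ gives exactly the stated estimate $\max_k\sup|e^k_\delta|\leq (\sup\phi)\,K$.

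I expect the main obstacle to be the sign and scaling bookkeeping in the defect computation: one must verify that the barrier defect $\alpha_{nl}KL_\delta\phi$ (respectively $\alpha_{nl}KL_{N\delta}\phi$) genuinely dominates $\pm T^k_\delta$, which is precisely where the positivity hypotheses $L_\delta\phi\geq G$, $L_{N\delta}\phi\geq G$ and the normalization $|T^k_\delta|\leq KG$ are consumed (with the diffusivity $\alpha_{nl}$ absorbed into the comparison constant). The secondary subtlety is the region switch at $B\Omega_i$, where the operator becomes $L_{N\delta}$ and the time derivative is weighted by $Q_\delta$; the crucial observation making this clean is that choosing a \emph{time-independent} barrier $\phi$ annihilates the $Q_\delta$-weighted increment, so the weight never interferes with the subsolution inequality.
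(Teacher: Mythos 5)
Your proof is correct and takes essentially the same route as the paper's own proof: the same constant $K$, the same comparison functions $K\phi\pm e^k_\delta$, the same defect computation on $\Omega_{nl}\backslash B\Omega_i$ and on $B\Omega_i$, and the same application of the semi-discrete maximum principle of Lemma \ref{eqn:maxprincipleBE}. You are in fact a bit more explicit than the paper on two points---verifying the parabolic boundary data ($e^0_\delta\equiv 0$ and $e^k_\delta\equiv 0$ on $B\Omega_d$) and flagging the $\alpha_{nl}$ bookkeeping, which the paper's displayed identity handles loosely by pulling $\alpha_{nl}$ in front of $T^k_\delta$---so there are no gaps to report.
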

\begin{proof}
The proof is obtained with the maximum principle: Let $K=\underset{k=0}{\overset{M}\max}\underset{\mathbf{x}\in \overline{\Omega_{nl}}}\sup \dfrac{|T^k_\delta(\mathbf{x})|}{G(\mathbf{x})}$, then for $K \phi(\mathbf{x})+e^k_\delta(\mathbf{x})$ %
we have: 
\begin{align*}
 \nonumber \dfrac{1}{\Delta t}(e^k_\delta(\mathbf{x})-e^{k-1}_\delta(\mathbf{x})) - \alpha_{nl} L_\delta (K \phi(\mathbf{x})+e^k_\delta(\mathbf{x}))=&\alpha_{nl}\left[-\max_{k=0}^{M}\sup_{\mathbf{x}\in \overline{\Omega_{nl}}}\dfrac{|T^k_\delta(\mathbf{x})|}{G(\mathbf{x})}L_\delta\phi(\mathbf{x})+T^k_\delta(\mathbf{x})\right] \leq 0
\end{align*}
for $\mathbf{x}\in \Omega_{nl}\backslash B\Omega_i$. A similar argument holds for $\mathbf{x}\in B\Omega_i$. With the maximum principle in Lemma \ref{eqn:maxprincipleBE} we have
\begin{align*}
 \nonumber\max_{k=0}^{M}\sup_{\mathbf{x}\in \overline{\Omega_{nl}}} e^k_\delta(\mathbf{x})\leq& \max_{k=0}^{M}\sup_{\mathbf{x}\in \overline{\Omega_{nl}}}(K \phi(\mathbf{x})+e^k_\delta(\mathbf{x}))
 \leq K \max_{k=0}^{M}\sup_{\mathbf{x}\in \overline{\Omega_{nl}}\cup B\Omega_d} \phi(\mathbf{x}).
\end{align*}
Similarly, we have $-\dfrac{1}{\Delta t}(e^k_\delta(\mathbf{x})-e^{k-1}_\delta(\mathbf{x}))-\alpha_{nl}L_\delta(K\phi(\mathbf{x})-e^k_\delta(\mathbf{x})) \leq0$ for $\mathbf{x}\in \Omega_{nl}\backslash B\Omega_{i}$ and %
$-\dfrac{1}{\Delta t}(e^k_\delta(\mathbf{x})-e^{k-1}_\delta(\mathbf{x}))-\alpha_{nl}L_{N\delta}(K\phi(\mathbf{x})-e^k_\delta(\mathbf{x})) \leq0$ for $\mathbf{x}\in B\Omega_{i}$, hence
\begin{align*}
 \nonumber\max_{k=0}^{M}\sup_{\mathbf{x}\in \overline{\Omega_{nl}}} (-e^k_\delta(\mathbf{x}))\leq& \max_{k=0}^{M}\sup_{\mathbf{x}\in \overline{\Omega_{nl}}}(K \phi(\mathbf{x})-e^k_\delta(\mathbf{x}))
 \leq K \max_{k=0}^{M}\sup_{\mathbf{x}\in \overline{\Omega_{nl}}\cup B\Omega_d} \phi(\mathbf{x}).
\end{align*}
\end{proof}

With the above lemma and assuming that the datum has sufficient H\"{o}lder continuity, we obtain the following main theorem:
\begin{theorem}\label{thm:conv}
Suppose $f_{nl}\in C^{1+\epsilon/2,\epsilon}((0,\infty)\times \mathbb{R}^2)$, $u^D_{nl}\in C^{2+\epsilon/2,2+\epsilon}((0,\infty)\times (\overline{\Omega_{nl}}\cup B\Omega_d))$, $\dfrac{\partial u^D_{nl}}{\partial \mathbf{n}}=0$ on $\Gamma_i$, $u^{IC}\in C^{2+\epsilon}(\mathbb{R}^2)$, and $\epsilon\in(0,1)$, $\{{u}^k_{nl,\delta}(\xb)\}$ are the semi-discretized results from the backward Euler method to the nonlocal problem.  Then for sufficiently small $\delta$, there exists a constant $C$ independent of $\delta$ such that
 \begin{equation}\label{eqn:tbound}
  \sup_{\mathbf{x}\in\Omega_{nl}}|u_{nl,\delta}^M(\mathbf{x})-u_0(\mathbf{x},T)|\leq C(T)(\Delta t+\delta^2),
 \end{equation}
 where $M=\dfrac{T}{\Delta t}$.
\end{theorem}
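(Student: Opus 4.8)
The plan is to prove \eqref{eqn:tbound} by comparing the time-discrete nonlocal solution first with the time-discrete local solution and then with the exact local limit. Introducing $u^k_0$, the backward-Euler-in-time approximation of the classical problem \eqref{eqn:locallimit}, I would split
$$\sup_{\mathbf{x}\in\Omega_{nl}}|u^M_{nl,\delta}(\mathbf{x})-u_0(\mathbf{x},T)|\leq \sup_{\mathbf{x}\in\Omega_{nl}}|u^M_{nl,\delta}(\mathbf{x})-u^M_0(\mathbf{x})|+\sup_{\mathbf{x}\in\Omega_{nl}}|u^M_0(\mathbf{x})-u_0(\mathbf{x},T)|.$$
The second term is purely the temporal consistency error of backward Euler for the classical heat equation; under the stated H\"older hypotheses, parabolic regularity theory provides enough smoothness of $u_0$ (in time and space, uniformly on $[0,T]$) that the standard backward-Euler estimate yields $\mathcal{O}(\Delta t)$, while simultaneously guaranteeing that the truncation bounds of Lemma \ref{lem:T} hold with constants uniform in the time index $k$. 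The first term equals $\max_k\sup|e^M_\delta|$, the nonlocal-to-local consistency error for the time-discrete problems, and this is what the barrier machinery must control at order $\mathcal{O}(\delta^2)$.

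For the first term I would invoke Lemma \ref{thm:phi}, which reduces the task to exhibiting a single nonnegative barrier $\phi$, defined on $\overline{\Omega_{nl}}\cup B\Omega_d$ and bounded independently of $\delta$, with $L_\delta\phi\geq G$ on $\Omega_{nl}\setminus B\Omega_i$ and $L_{N\delta}\phi\geq G$ on $B\Omega_i$ for some positive $G$ satisfying $\sup_{\mathbf{x}}|T^k_\delta(\mathbf{x})|/G(\mathbf{x})=\mathcal{O}(\delta^2)$ uniformly in $k$. Combining with Lemma \ref{lem:T}, the requirement becomes: $G$ is bounded below by a positive constant in the interior (to absorb the $\mathcal{O}(\delta^2)$ interior truncation), while in the collar $B\Omega_i$ one needs $G(\mathbf{x})\gtrsim[\delta-s_\xb]^{3/2}\delta^{-5/2}$ so as to dominate the degraded boundary-layer term $[\delta-s_\xb]^{3/2}\delta^{-1/2}$ after dividing by $\delta^2$. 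Note this forces $G\sim\delta^{-1}$ right at $\Gamma_i$, so no barrier with uniformly bounded nonlocal operator can succeed; the boundary layer must be absorbed by a correspondingly singular lower bound on $L_{N\delta}\phi$.

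The barrier I would build from the geometry fixed before the statement. Taking $\phi(\mathbf{x})=|\mathbf{x}-\tilde{\mathbf{z}}|^2$ (a convex function anchored at the tangent intersection $\tilde{\mathbf{z}}$), one has $L_\delta\phi=\Delta\phi+\mathcal{O}(\delta^2)=4+\mathcal{O}(\delta^2)>0$ in the interior, supplying the constant lower bound. The crucial point is the behaviour of $L_{N\delta}\phi$ inside $B\Omega_i$: because the term $2\int_{\Omega_{nl}}J_\delta(|\mathbf{x}-\mathbf{y}|)(\phi(\mathbf{y})-\phi(\mathbf{x}))\,d\mathbf{y}$ integrates over a region missing the exterior spherical cap of $B(\mathbf{x},\delta)$, the otherwise-cancelling first-order Taylor term survives and produces an asymmetric contribution proportional to $(\nabla\phi(\mathbf{x})\cdot\mathbf{n}(\overline{\mathbf{x}}))$ times the cap moment $\delta^{-4}\int_{s_\xb}^{\delta}r\sqrt{\delta^2-r^2}\,dr\sim[\delta-s_\xb]^{3/2}\delta^{-5/2}$. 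The geometric assumption $\pi_{\partial\Omega_{nl}}(\tilde{\mathbf{z}})\in\Gamma_i$ together with the convexity of $\Omega_{nl}$ guarantees that $\nabla\phi\cdot\mathbf{n}$ has the correct sign and is bounded below by a positive constant uniformly along $\Gamma_i$, so that $L_{N\delta}\phi\gtrsim[\delta-s_\xb]^{3/2}\delta^{-5/2}$ and $G$ has exactly the profile demanded above. Dividing then gives $|T^k_\delta|/G=\mathcal{O}(\delta^2)$ in both regions, and since $\sup\phi=\mathcal{O}(1)$, Lemma \ref{thm:phi} yields $\max_k\sup|e^k_\delta|=\mathcal{O}(\delta^2)$, which combined with the $\mathcal{O}(\Delta t)$ temporal error completes the proof.

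The main obstacle is making the sharp lower bound on $L_{N\delta}\phi$ in the collar rigorous. One must replace the heuristic cap computation by an exact splitting of $B(\mathbf{x},\delta)$ into the true exterior region, the sliver $A_\delta$ between $\partial\Omega_{nl}$ and its tangent line, and the region $D_\delta$ beyond the tangent line, exactly as in Lemma \ref{lem:Q}; control the $A_\delta$ contribution via $|A_\delta|\leq C\delta^3$ from the $C^3$ regularity; and verify that the tangential $H_\delta$ term and the weight $M_\delta$ do not destroy the positivity of the leading asymmetric contribution. Checking that one fixed $\phi$ simultaneously delivers the constant interior bound and the correctly scaled singular boundary bound, with a uniform sign of $\nabla\phi\cdot\mathbf{n}$ on all of $\partial\Omega_{nl}$ (in particular near the junctions $\mathbf{z}_1,\mathbf{z}_2$ where $\Gamma_i$ meets $\partial\Omega_D$), is the delicate step, and is precisely where the convexity and the geometric hypothesis on $\tilde{\mathbf{z}}$ are essential.
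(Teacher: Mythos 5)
Your overall architecture coincides with the paper's own proof: the same splitting into a backward-Euler temporal error plus a nonlocal consistency error, the same reduction via the barrier comparison result (Lemma \ref{thm:phi}) combined with the truncation bounds of Lemma \ref{lem:T}, and the same (correct) recognition that the barrier must produce the singular lower bound $L_{N\delta}\phi\gtrsim[\delta-s_\xb]^{3/2}\delta^{-5/2}$ in the collar. The gap is in your concrete barrier. With $\phi(\mathbf{x})=|\mathbf{x}-\tilde{\mathbf{z}}|^2$ you have $\nabla\phi(\mathbf{x})=2(\mathbf{x}-\tilde{\mathbf{z}})$, and your argument hinges on the claim that $\nabla\phi\cdot\mathbf{n}(\overline{\mathbf{x}})$ has a fixed sign and is bounded away from zero uniformly along $\Gamma_i$. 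That claim is false, and it fails precisely because of how $\tilde{\mathbf{z}}$ is defined: since $\tilde{\mathbf{z}}=\tau(\mathbf{z}_1)\cap\tau(\mathbf{z}_2)$ lies on the tangent line to $\partial\Omega_{nl}$ at $\mathbf{z}_1$, the vector $\mathbf{z}_1-\tilde{\mathbf{z}}$ is tangent to the boundary there, so $\nabla\phi(\mathbf{z}_1)\cdot\mathbf{n}(\mathbf{z}_1)=2(\mathbf{z}_1-\tilde{\mathbf{z}})\cdot\mathbf{n}(\mathbf{z}_1)=0$, and likewise at $\mathbf{z}_2$. (Concretely, for the unit disk with $\Gamma_i$ an arc of half-angle $\alpha<\pi/2$ centered at the bottom, one computes $(\overline{\mathbf{x}}-\tilde{\mathbf{z}})\cdot\mathbf{n}(\overline{\mathbf{x}})=1-\cos\theta/\cos\alpha$ at the boundary point of angle $\theta$, which is $\le 0$ but vanishes at the junctions $\theta=\pm\alpha$.)

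This degeneracy destroys the rate. For points $\mathbf{x}\in B\Omega_i$ whose projections lie within $\mcO(\delta)$ of $\mathbf{z}_1$ or $\mathbf{z}_2$, the asymmetric first-order contribution to $L_{N\delta}\phi$ is proportional to $|\nabla\phi\cdot\mathbf{n}|\,[\delta-s_\xb]^{3/2}\delta^{-5/2}=\mcO(\delta)\cdot\mcO(\delta^{-1})=\mcO(1)$, so your $G$ is merely $\mcO(1)$ there; but the boundary truncation bound of Lemma \ref{lem:T}, $[\delta-s_\xb]^{3/2}\delta^{-1/2}$, depends only on $s_\xb$ and not on the distance to the junctions, so it is still of size $\mcO(\delta)$ at such points. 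Hence $\sup|T^k_\delta|/G=\mcO(\delta)$ near the junctions, and Lemma \ref{thm:phi} only delivers $\mcO(\Delta t+\delta)$, not the claimed $\mcO(\Delta t+\delta^2)$. This is exactly the failure mode the paper's barrier is engineered to avoid: it takes $\phi(\mathbf{x})=|\mathrm{dist}(\mathbf{x},\tau(\mathbf{z}_3))+1|^2$, whose gradient has the \emph{constant} direction normal to the fixed line $\tau(\mathbf{z}_3)$, with $\mathbf{z}_3$ chosen so that $\tau(\mathbf{z}_3)$ is orthogonal to the bisector of $\angle\mathbf{z}_2\tilde{\mathbf{z}}\mathbf{z}_1$; the hypothesis $\pi_{\partial\Omega_{nl}}(\tilde{\mathbf{z}})\in\Gamma_i$ then guarantees that every normal $\mathbf{n}(\overline{\mathbf{x}})$, $\overline{\mathbf{x}}\in\Gamma_i$, makes an angle strictly and uniformly less than $\pi/2$ with this fixed direction --- including at $\mathbf{z}_1$ and $\mathbf{z}_2$ --- which restores the uniform bound $L_{N\delta}\phi\ge C[\delta-s_\xb]^{3/2}\delta^{-5/2}+C_1>0$ on all of $B\Omega_i$. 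If you replace your radial barrier by this tangent-line barrier (or any quadratic whose gradient stays uniformly transversal to $\Gamma_i$ up to its endpoints), the rest of your outline goes through essentially as in the paper.
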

\begin{proof}
With the regularity of the given datum and the domain, we have $u_0(\xb,t)\in C^{2+\epsilon/2,2+\epsilon}((0,\infty)\times\Omega_{nl})$ (see, e.g., \cite[Theorem~10.4.1]{krylov1996lectures}) and therefore $\underset{\xb\in\Omega_{nl}}\sup |u_0^M(\mathbf{x})-u_0(\mathbf{x},T)|\leq C(T)\Delta t$. Since
$|u_{nl,\delta}^M(\mathbf{x})-u_0(\mathbf{x},T)|\leq |u_{nl,\delta}^M(\mathbf{x})-u^M_0(\mathbf{x})|+|u_0^M(\mathbf{x})-u_0(\mathbf{x},T)|$, it suffices to show that $\underset{\xb\in\Omega_{nl}}\sup |u_{nl,\delta}^M(\mathbf{x})-u^M_0(\mathbf{x})|\leq C(T)\delta^2$. As shown in the left plot of Figure \ref{graph}, let $\mathbf{z}_3\in \partial\Omega$ be a point such that $\tau(\mathbf{z}_3)$ is orthogonal to the bisector of the angle $\angle \mathbf{z}_2 \tilde{\mathbf{z}}\mathbf{z}_1$. Set the barrier function as
\begin{equation}\label{eqn:phi} 
\phi(\mathbf{x}):=|\text{dist}(\mathbf{x},\tau(\mathbf{z}_3))+1|^2.
\end{equation}
Then it can be shown that
\begin{align*}
L_{\delta}\phi(\mathbf{x})&\ge  C, &\text{ for }\xb\in\Omega_{nl}\backslash B\Omega_i\\
L_{N\delta}\phi(\mathbf{x})&\ge  C[\delta-s_\xb]^{3/2}\delta^{-5/2} +C_1>0. &\text{ for }\xb\in B\Omega_i
\end{align*}
Taking $G(\mathbf{x})=L_\delta\phi$ for $\mathbf{x}\in \Omega_{nl}\backslash B\Omega_{i}$, $G(\mathbf{x})=L_{N\delta}\phi$ for $\mathbf{x}\in B\Omega_{i}$ in Lemma \ref{thm:phi}, combining the above bounds with the truncation bounds $|T_\delta|$ provided in Lemma \ref{lem:T} we finish the proof. For further details on the bounds, we refer the interested readers to \cite{You2018}.
\end{proof}

\begin{remark}
With the maximum principle \ref{eqn:maxprinciple},  assuming that the nonlocal solution $u_{nl,\delta}(\xb,t)$ has sufficient regularity and employing a $q^{\text{th}}$-order temporal discretization method to the nonlocal problem, then for sufficiently small $\delta$, there exists a constant $C$ independent of $\delta$ such that
 \begin{equation*}
  \sup_{\mathbf{x}\in\Omega_{nl}}|u_{nl,\delta}^M(\mathbf{x})-u_0(\mathbf{x},T)|\leq C(T)(\Delta t^q+\delta^2).
 \end{equation*}
\end{remark}

\begin{remark}
The convergence rate in Theorem \ref{thm:conv} is optimal considering the $\mcO(\delta^2)$ convergence of the nonlocal equation to its local limit away from the boundary.
\end{remark}

\subsection{A Nonlocal Robin-Type and Corner Boundary Condition}\label{sec:robinmath}

Based on the Neumann-type constraint problem, we now develop the nonlocal analog to the classical Robin boundary condition $\beta u(\mathbf{x})+\dfrac{\partial u}{\partial \mathbf{n}}(\mathbf{x})=g(\mathbf{x})$ with $\beta\neq 0$ on a sharp interface $\Gamma_i$. Specifically, we propose the nonlocal Robin-type boundary condition with a modified formulation in $B\Omega_i$: 
\begin{align}
\nonumber&Q_\delta(\xb)\dot{u}_{nl,\delta}(\xb,t)-2\alpha_{nl}\int_{\Omega_{nl}} J_{\delta}(|\mathbf{x}-\mathbf{y}|)(u_{nl,\delta}(\mathbf{y},t)-u_{nl,\delta}(\mathbf{x},t))d\mathbf{y}+\alpha_{nl}\beta V_\delta(\xb)u_{nl,\delta}(\overline{\mathbf{x}},t)\\
&-2\alpha_{nl}M_\delta(\mathbf{x}) \int_{-\delta}^{\delta} H_{\delta}(|l|) (u_{nl,\delta}(\mathbf{x}_{l},t)-u_{nl,\delta}(\mathbf{x},t)) d\mathbf{x}_{l}=Q_\delta(\xb)f_{nl}({{\mathbf{x}}},t)+\alpha_{nl}V_\delta(\xb)g(\overline{\mathbf{x}},t),\label{eqn:formula4}
\end{align}
where $Q_\delta(\xb)$, $V_\delta(\xb)$, and $M_\delta(\mathbf{x})$ are as defined in \eqref{eqn:Q}-\eqref{eqn:M}. 
We then obtain the following nonlocal constraint problem
\begin{align}
\nonumber&\dot{u}_{nl,\delta}(\xb,t)-\alpha_{nl}\mcL_{\delta} u_{nl,\delta}(\xb,t)=f_{nl}(\xb,t),\quad \mathbf{x}\in\Omega_{nl}\backslash B\Omega_{i}\\
\nonumber&Q_\delta(\xb)\dot{u}_{nl,\delta}(\xb,t)-\alpha_{nl}\mcL_{N\delta} u_{nl,\delta}(\xb,t)+\alpha_{nl}\beta V_\delta (\xb)u_{nl,\delta}(\overline{\mathbf{x}},t)=Q_\delta(\xb)f_{nl}(\xb,t)+\alpha_{nl}V_\delta (\xb)g(\overline{\mathbf{x}},t),\quad \mathbf{x}\in B\Omega_{i}\\
\nonumber&u_{nl,\delta}(\xb,t)=u^D_{nl}(\xb,t),\quad \mathbf{x}\in B\Omega_{d}\\
&u_{nl,\delta}(\xb,0)=u^{IC}(\xb).\quad \mathbf{x}\in\Omega_{nl}\label{eqn:nonlocaleqnRobin}
\end{align}
Employing the backward Euler scheme for time integration and the meshfree quadrature rule described in Section \ref{sec:nl_model}, for $\xb_i\in B\Omega_i$, we solve for $(U_{\delta})^{k}_i\approx u_{nl,\delta}(\xb_i,t^k)$ with:
\begin{align}
\nonumber&\dfrac{Q_\delta(\xb_i)}{\Delta t}((U_{\delta})_i^{k+1}-(U_{\delta})_i^k)-2\alpha_{nl}\tilde{u}_\delta^{k+1}DP(P^{\mathrm{T}}DP)^{-1}\int_{B(\mathbf{x}_i,\delta)\cap\Omega_{nl}} J_{\delta}(|\mathbf{y}-\mathbf{x}_i|)(R(\mathbf{y})-R(\mathbf{x}_i))d\mathbf{y}\\ \nonumber&-2 \alpha_{nl}\tilde{u}_\delta^{k+1}DP(P^{\mathrm{T}}DP)^{-1} M_\delta(\mathbf{x}_i)\int_{-\delta}^{\delta} H_{\delta}(|l|) (R(\mathbf{x}_{l})-R(\mathbf{x}_i)) d\mathbf{x}_{l}+\alpha_{nl}\beta V_\delta(\xb_i)(U_{\delta})_{\overline{i}}^{k+1}\\
&= Q_\delta(\xb_i)f_{nl}(\xb_i,t^{k+1})+\alpha_{nl}V_\delta(\xb_i)g(\overline{\mathbf{x}_i},t^{k+1}),\label{eqn:nonlocaldxdtbc}
\end{align}
with $\tilde{u}^k_{\delta}=((U_\delta)_j^k:j\in I(\mathbf{x}_i))^{\mathrm{T}} \in \mathbb{R}^{\#I(\mathbf{x}_i)}$. $(U_{\delta})_{\overline{i}}^{k}$ represents the solution corresponding to $\overline{\xb}_i$ which may be estimated based on the generalized moving least squares (GMLS) approximation framework if $\overline{\xb}_i$ is not in the collection of grid points $\chi_{h}$.

\begin{remark}
The statement of the Robin problem requires solution regularity beyond the $L^2$-equivalent nonlocal energy space introduced for the Neumann problem, due to the evaluation of $u_{nl,\delta}$ at $\overline{\xb}$.  However, the proposed Robin problem is only intended for use in the spatially-discretized setting, where this extra regularity is available.  In this work, we consider asymptotically-compatible discretizations, and the continuous problem is in fact the local heat equation, although, more generally, one might incorporate additional phenomena (e.g., bond damage in nonlocal elasticity) such that the vanishing-horizon limit of the nonlocal problem does not correspond to a local problem.
\end{remark}

 \begin{figure}
 \centering
 \includegraphics[scale=0.6]{./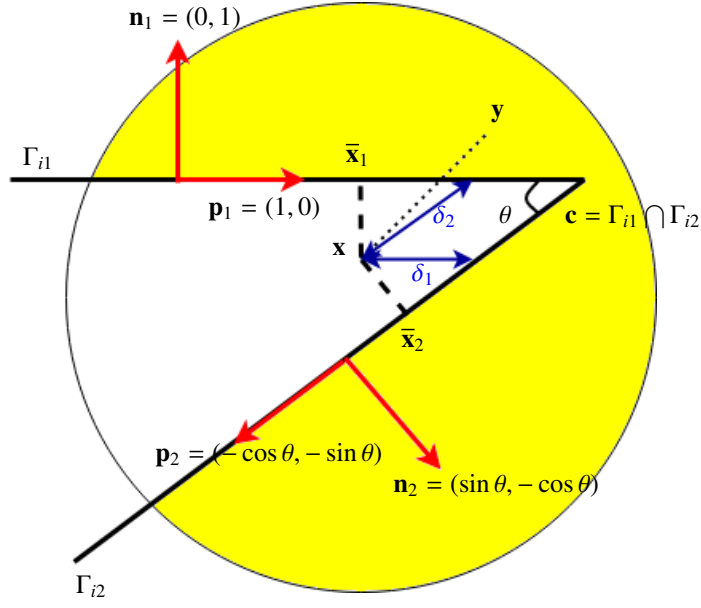}
 \put(-123,128){$\mathbf{x}$}\put(-118,163){$\overline{\mathbf{x}}_1$}\put(-97,93){$\overline{\mathbf{x}}_2$}
 \put(-100,38){$\mathbf{n}_2=(\sin\theta,-\cos\theta)$}\put(-200,215){$\mathbf{n}_1=(0,1)$}
 \put(-190,50){$\mathbf{p}_2=(-\cos\theta,-\sin\theta)$}\put(-170,143){$\mathbf{p}_1=(1,0)$}
 \put(-35,140){$\mathbf{c}=\Gamma_{i1}\bigcap \Gamma_{i2}$}
 \put(-240,162){$\Gamma_{i1}$}\put(-220,0){$\Gamma_{i2}$}
 \put(-60,140){$\theta$}\put(-63,180){$\mathbf{y}$}
 \put(-85,140){{\color{blue}$\delta_2$}}\put(-93,118){{\color{blue}$\delta_1$}}
 \caption{Geometric assumptions and notation for the corner case. Here the yellow region denotes $B(\mathbf{x},\delta)\cap (\mathbb{R}^2\backslash\Omega_{nl})$.}
\label{fig:corner}
\end{figure}

Similar as in \cite{You2018}, to investigate how the new Robin-type constraint formulation extrapolates to the setting of Lipschitz domains, we further extend the proposed formulation to boundary $\Gamma_i$ with corners. As shown in Figure \ref{fig:corner}, here we assume that there are two intersecting boundaries with Robin-type boundary conditions:
\begin{align}
 \beta u+\dfrac{\partial u}{\partial \mathbf{n}_1}=g_1,\quad&\text{ on }\Gamma_{i1},\\
 \beta u+\dfrac{\partial u}{\partial \mathbf{n}_2}=g_2,\quad&\text{ on }\Gamma_{i2},
\end{align}
and the two boundaries intersect at $\mathbf{c}=\Gamma_{i1}\cap \Gamma_{i2}$. For any point $\mathbf{x}$ satisfying $|\mathbf{x}-\mathbf{c}|<\delta$, we project $\mathbf{x}$ onto the two boundaries respectively, i.e., $\mathbf{x}=\overline{\mathbf{x}}_1-s_{\mathbf{x}_1}\mathbf{n}_1(\overline{\mathbf{x}}_1)=\overline{\mathbf{x}}_2-s_{\mathbf{x}_2}\mathbf{n}_2(\overline{\mathbf{x}}_2)$. Here we assume that both $\Gamma_{i1}$ and $\Gamma_{i2}$ are straight lines near the corner $\mathbf{c}$, although the formulation can be further extended to more general cases. Denote $\theta$ as the angle between $\Gamma_{i1}$ and $\Gamma_{i2}$, without loss of generality we further denote $\mathbf{n}_1 = (0,1)$ and $\mathbf{n}_2 = (\sin \theta,-\cos \theta)$. Correspondingly, we have $\mathbf{p}_1 = (1,0)$ and $\mathbf{p}_2 = (-\cos \theta,-\sin \theta)$. For each point $\mathbf{x}= (x_1,x_2)$, with Taylor expansion we have the following approximation for $u(\mathbf{y})-u(\mathbf{x})$ with $\mathbf{y} = (y_1,y_2)\in B(\mathbf{x},\delta)\cap \partial \Omega_{N\delta}$:
\begin{align*}
 \nonumber &u(\mathbf{y},t)-u(\mathbf{x},t)\\
 =&d_1\dfrac{\partial u(\mathbf{x},t)}{\partial \mathbf{n}_1}+d_2\dfrac{\partial u(\mathbf{x},t)}{\partial \mathbf{n}_2}+\dfrac{1}{2}d_1^2[u(\mathbf{x},t)]_{n_1n_1}+\dfrac{1}{2}d_2^2[u(\mathbf{x},t)]_{n_2n_2}+d_1d_2[u(\mathbf{x},t)]_{n_1n_2}+\mcO(\delta^3)\\
 =&d_1[g_1(\overline{\mathbf{x}}_1,t)-\beta u(\overline{\mathbf{x}}_1,t)]%
 +d_2[g_2(\overline{\mathbf{x}}_2,t)-\beta u(\overline{\mathbf{x}}_2,t)]+\dfrac{1}{\alpha_{nl}}\left(\dfrac{1}{2}d_1^2-%
 (\overline{\mathbf{x}}_1-\mathbf{x})\cdot \mathbf{n}_1 d_1\right)\left(-f_{nl}(\mathbf{x},t)-\alpha_{nl}[u(\mathbf{x},t)]_{p_1p_1}+\dot{u}(\mathbf{x},t)\right)\\
 \nonumber&+\dfrac{1}{\alpha_{nl}}\left(\dfrac{1}{2}d_2^2-%
 (\overline{\mathbf{x}}_2-\mathbf{x})\cdot \mathbf{n}_2 d_2\right)\left(-f_{nl}(\mathbf{x},t)-\alpha_{nl}[u(\mathbf{x},t)]_{p_2p_2}+\dot{u}(\mathbf{x},t)\right)\\
 \nonumber&+\dfrac{1}{2\sin \theta}d_1d_2 \left(\dfrac{\partial g_1(\overline{\mathbf{x}}_1,t)}{\partial \mathbf{p}_1}-\beta \dfrac{\partial u(\overline{\mathbf{x}}_1,t)}{\partial \mathbf{p}_1}-\dfrac{\partial g_2(\overline{\mathbf{x}}_2,t)}{\partial \mathbf{p}_2}+\beta\dfrac{\partial u(\overline{\mathbf{x}}_2,t)}{\partial \mathbf{p}_2}+\dfrac{1}{\alpha_{nl}}[f_{nl}(\mathbf{x},t)-\dot{u}(\mathbf{x},t)]\sin \theta \cos \theta \right)+\mcO(\delta^3),
 \end{align*}
where $d_1 := \dfrac{\cos \theta}{\sin \theta}(y_1-x_1) + (y_2-x_2)$, $d_2 := \dfrac{1}{\sin \theta}(y_1-x_1)$. Moreover, we have
\begin{align*}
&[u(\mathbf{x},t)]_{p_1p_1}+[u(\mathbf{x},t)]_{p_2p_2}=\dfrac{1}{\alpha_{nl}}[-f_{nl}(\mathbf{x},t)+\dot{u}(\mathbf{x},t)]+\cot\theta \left[\dfrac{\partial g_1(\overline{\mathbf{x}}_1,t)}{\partial \mathbf{p}_1}-\beta\dfrac{\partial u(\overline{\mathbf{x}}_1,t)}{\partial \mathbf{p}_1}-\dfrac{\partial g_2(\overline{\mathbf{x}}_2,t)}{\partial \mathbf{p}_2}+\beta\dfrac{\partial u(\overline{\mathbf{x}}_2,t)}{\partial \mathbf{p}_2}\right]+O (\delta),\\
&\dfrac{\partial u(\overline{\mathbf{x}}_1,t)}{\partial \mathbf{p}_1}=\cot\theta g_1(\overline{\mathbf{x}}_1,t)+\dfrac{1}{\sin\theta}g_2(\overline{\mathbf{x}}_2,t)+\mcO(\delta),\qquad \dfrac{\partial u(\overline{\mathbf{x}}_2,t)}{\partial \mathbf{p}_2}=-\dfrac{1}{\sin\theta} g_1(\overline{\mathbf{x}}_1,t)-\cot\theta g_2(\overline{\mathbf{x}}_2,t)+\mcO(\delta).
\end{align*}
Let 
\[D_1 = 2\int_{\mathbb{R}^2\backslash\Omega_{nl}}J_\delta(|\mathbf{x}-\mathbf{y}|)\left[\dfrac{1}{2}d_1^2-(\overline{\mathbf{x}}_1-\mathbf{x})\cdot \mathbf{n}_1 d_1\right]d \mathbf{y},\qquad D_2 = 2\int_{\mathbb{R}^2\backslash\Omega_{nl}}J_\delta(|\mathbf{x}-\mathbf{y}|)\left[\dfrac{1}{2}d_2^2-(\overline{\mathbf{x}}_2-\mathbf{x})\cdot \mathbf{n}_2 d_2\right] d\mathbf{y},\]
for $D_1>D_2$, we take $\delta_1$ as the arc length from $\mathbf{x}$ to $\Gamma_{i}$ following the contour parallel to $\Gamma_{i1}$ and use $2\int_{-\delta_1}^{\delta_1} H_{\delta_1}(|l|)(u(\mathbf{x}_{l1},t)-u(\mathbf{x},t))d\mathbf{x}_{l1}$ to denote the integral on this contour which approximates $[u(\mathbf{x},t)]_{p_1p_1}$. We obtain the following formulation for $\xb\in B(\mathbf{c},\delta)\cap \Omega_{nl}$:
\begin{align}
 \nonumber&Q^c_\delta(\xb)\dot{u}_{nl,\delta}(\xb,t)-2\alpha_{nl}\int_{\Omega_{nl}} J_{\delta}(|\mathbf{x}-\mathbf{y}|)(u_{nl,\delta}(\mathbf{y},t)-u_{nl,\delta}(\mathbf{x},t))d\mathbf{y}+4\alpha_{nl}(D_1-D_2)\int_{-\delta_1}^{\delta_1} H_{\delta_1}(|l|)(u_{nl,\delta}(\mathbf{x}_{l1},t)-u_{nl,\delta}(\mathbf{x},t))d\mathbf{x}_{l1}\\
 \nonumber&+2\alpha_{nl}\beta\int_{\mathbb{R}^2\backslash\Omega_{nl}} J_{\delta}(|\mathbf{x}-\mathbf{y}|)
 \bigg( d_1u_{nl,\delta}(\overline{\mathbf{x}}_1,t) +d_2u_{nl,\delta}(\overline{\mathbf{x}}_2,t)\bigg)d\mathbf{y}\\
 \nonumber=&Q^c_\delta(\xb)f(\mathbf{x},t)-\alpha_{nl}D_2\cot \theta \left(\dfrac{\partial g_1(\overline{\mathbf{x}}_1,t)}{\partial \mathbf{p}_1}-\dfrac{\partial g_2(\overline{\mathbf{x}}_2,t)}{\partial \mathbf{p}_2}\right)+\beta\alpha_{nl}D_2\cot\theta\left(\cot\theta+\dfrac{1}{\sin\theta}\right)(g_1(\overline{\mathbf{x}}_1,t)+g_2(\overline{\mathbf{x}}_2,t))\\
\nonumber&+2\alpha_{nl}\int_{\mathbb{R}^2\backslash\Omega_{nl}} J_{\delta}(|\mathbf{x}-\mathbf{y}|)
 \bigg( d_1g_1(\overline{\mathbf{x}}_1,t) +d_2g_2(\overline{\mathbf{x}}_2,t)+\dfrac{d_1d_2}{2\sin\theta} \left(\dfrac{\partial g_1(\overline{\mathbf{x}}_1,t)}{\partial \mathbf{p}_1}-\dfrac{\partial g_2(\overline{\mathbf{x}}_2,t)}{\partial \mathbf{p}_2}\right)\\
 &-\beta\dfrac{d_1d_2}{2\sin\theta}\cot\theta\left(\cot\theta+\dfrac{1}{\sin\theta}\right)(g_1(\overline{\mathbf{x}}_1,t)+g_2(\overline{\mathbf{x}}_2,t))\bigg) d\mathbf{y}\label{eqn:cornerf1}
\end{align}
where
\[Q^c_\delta(\xb)=1-D_1+\int_{\mathbb{R}^2\backslash\Omega_{nl}} J_{\delta}(|\mathbf{x}-\mathbf{y}|)d_1d_2\cos\theta d\mathbf{y}.\]
Else, we take $\delta_2$ as the arc length from $\mathbf{x}$ to $\Gamma_{i}$ following the contour parallel to $\Gamma_{i2}$ and use $2\int_{-\delta_2}^{\delta_2} H_{\delta_2}(|l|)(u(\mathbf{x}_{l2},t)-u(\mathbf{x},t))d\mathbf{x}_{l2}$ to denote the integral on this contour which approximates $[u(\mathbf{x},t)]_{p_2p_2}$. A similar formulation is obtained.

\begin{remark}
When the domain is concave and $\theta > \pi$ on the corner, it is possible that the projection points $\overline{\mathbf{x}}_1$ and $\overline{\mathbf{x}}_2$ are on the extended lines of $\Gamma_{i1}$ and $\Gamma_{i2}$. In this case, we project $\mathbf{x}$
onto the corner point $\mathbf{c}$ and evaluate $g_1$, $g_2$ on $\mathbf{c}$. The derivation is very similar as above. 
\end{remark}

\subsection{Numerical Results for Nonlocal Boundary Conditions}\label{sec:Robintest}

\begin{figure}
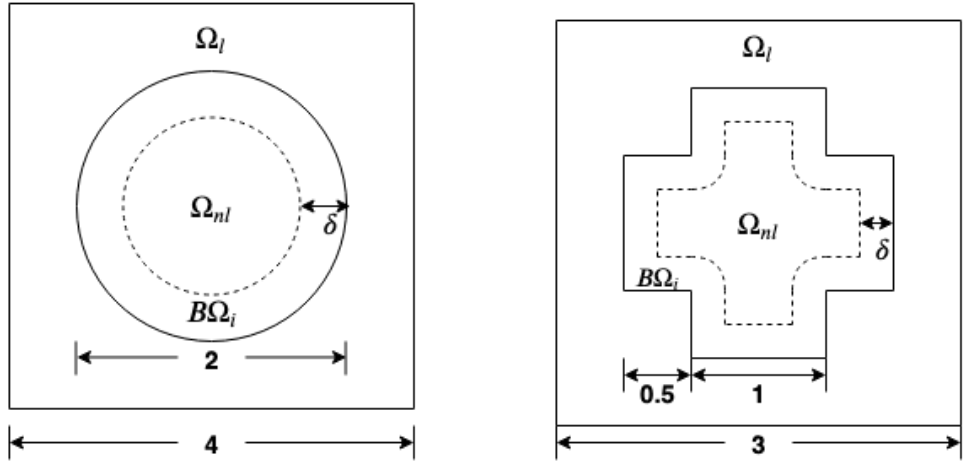

\centering
\subfigure{\includegraphics[width=0.35\textwidth]{./circlecouple_domain.png}}$\qquad\qquad$
\subfigure{\includegraphics[width=0.35\textwidth]{./crosscouple_domain.png}}
\caption{Illustration of the domains employed in numerical tests. Left: problem with a circular nonlocal subdomain and a surrounding local subdomain, corresponding to test 2 in Sections \ref{sec:test2} and \ref{sec:test2couple}. Right: test with a cross-shape nonlocal subdomain and a surrounding local subdomain, corresponding to test 3 in Sections \ref{sec:test2} and \ref{sec:test2couple}.}
\label{fig:couplingdomain}
\end{figure}

In this Section, we present numerical tests of the proposed nonlocal boundary treatment on $\Gamma_i$, by considering three types of representative domains $\Omega_{nl}$: a square domain in Section \ref{sec:test1} which represents the case with $0$ curvature on $\Gamma_i$; a circular domain in Section \ref{sec:test2}, which represents a case with nonzero curvature on $\Gamma_i$; and a cross-shape domain in Section \ref{sec:test3} which is a non-convex domain with corners and therefore it is outside the scope of the model problem analysis presented earlier in Section \ref{sec:neumann}. Illustration of the square domain can be found in the left plot of Figure \ref{fig2}, while the circular domain and the cross-shape domain are shown as the $\Omega_{nl}$ region in Figure \ref{fig:couplingdomain}. With the tests we aim to investigate the performance of the proposed nonlocal Neumann and Robin-type constraint formulation on patch tests, and to demonstrate the asymptotic convergence of the meshfree quadrature rule \eqref{eqn:nonlocaldxdt}. To maintain an easily scalable implementation, it is often desirable that the ratio $C_1\leq \dfrac{\delta}{h}\leq C_2$ as $\delta \rightarrow 0$. This so-called ``M-convergence'' results in a sparse linear system with bounded bandwidth that may be solved efficiently with standard preconditioning techniques \cite{bobaru2009convergence}. Therefore, in the asymptotic compatibility tests we focus on the case with $\delta/h=C$. For simplicity, we set $\alpha_{nl}=1$ in this section. Although the discussions and the proposed formulations in this paper are not tied to a specific kernel, in numerical tests we demonstrate the numerical performances with $J_\delta(r)=J^1_\delta(r)$.

\subsubsection{Test 1: a square domain with a straight line boundary}\label{sec:test1}

We first consider the nonlocal heat problem when $\Gamma_i=\{(1,y)|y\in[0,1]\}$, $\Omega_{nl}=[0,1] \times [0,1]$. Dirichlet-type boundary condition are imposed on the other three sides of $\Omega_{nl}$ in a collar with width $\delta$. An illustration of the domain can be found in the left plot of Figure \ref{fig2}. 

We first demonstrate the asymptotic compatibility. In this test, we set the initial condition $u^{IC}=0$ and external loading $f^{nl}(x,y,t)=(2t+2t^2)\sin(x)\cos(y)$. On $B\Omega_d$, a Dirichlet-type boundary condition $u^D_{nl}(x,y,t) := t^2\sin(x)\cos(y)$ is applied, and a Robin-type boundary condition $g(x,y,t)=\beta t^2\sin(1)\cos(y)+\pi \cos(1)\cos(y)$ is applied on the sharp interface $\Gamma_i$. Here we note that when $\beta=0$, the Robin-type boundary condition is equivalent to the Neumann-type boundary condition. The local limit of this problem has an analytical solution $u_0(x,y,t) = t^2\sin(x)\cos(y)$. To investigate the asymptotic compatibility when $\delta/h=C$, we refine $\delta$ and $h$ simultaneously keeping the ratio $\delta/h = 3.9$. For time discretization, we integrate until {$T=1$} using the backward-Euler method and $\Delta t= 100h^2$. The convergence results are presented in Figure \ref{fig:NLline}, where we demonstrate the difference between the numerical nonlocal solution and the analytical local limit $||u^{M,h}_{nl,\delta}-u_0(\xb,T)||$. Three different sets of Robin coefficients are employed here: (1) $\beta=0$ which is equivalent to the Neumann-type boundary condition; (2) $\beta$ is a non-zero constant, and (3) $\beta=C/h$. Note that the case (3) is tested here since $\beta=C/h$ is the most robust Robin coefficient for local-to-nonlocal coupling framework, as will be further discussed in Section \ref{sec:couple}. It is observed from Figure \ref{fig:NLline} that the second-order convergence $\mcO(\delta^2)$ is achieved from all three sets of Robin coefficients, which therefore verifies the analysis in Section \ref{sec:neumann} for the Neumann-type boundary condition and demonstrates the asymptotic compatibility of the numerical solver. The results on cases (2) and (3) illustrate that the second order convergence $\mcO(\delta^2)$ is also achieved on the nonlocal problem with Robin-type boundary condition, which can be seen as a generalization of the nonlocal Neumann-type boundary condition.

Moreover, we investigate the linear patch test problem with analytical linear solution $u_{nl}=u_0=x$ and the quadratic patch test problem with analytical quadratic solution $u_{nl}=u_0=x^2$. In the absence of forcing terms and with consistent boundary conditions on $B\Omega_d$ and $\Gamma_i$, we investigate if the nonlocal Robin-type constraint problem returns the accurate analytical nonlocal solution. 
The numerical results along the domain center line $y=1/2$ are reported in Figure \ref{fig:patchtest}. We observe that the numerical solution from the proposed Robin-type boundary condition passes both the linear and quadratic patch tests within machine precision accuracy 
and for several values of $h$ and $\beta$.

\begin{figure}
    \centering
     \subfigure{\includegraphics[width=0.48\textwidth]{./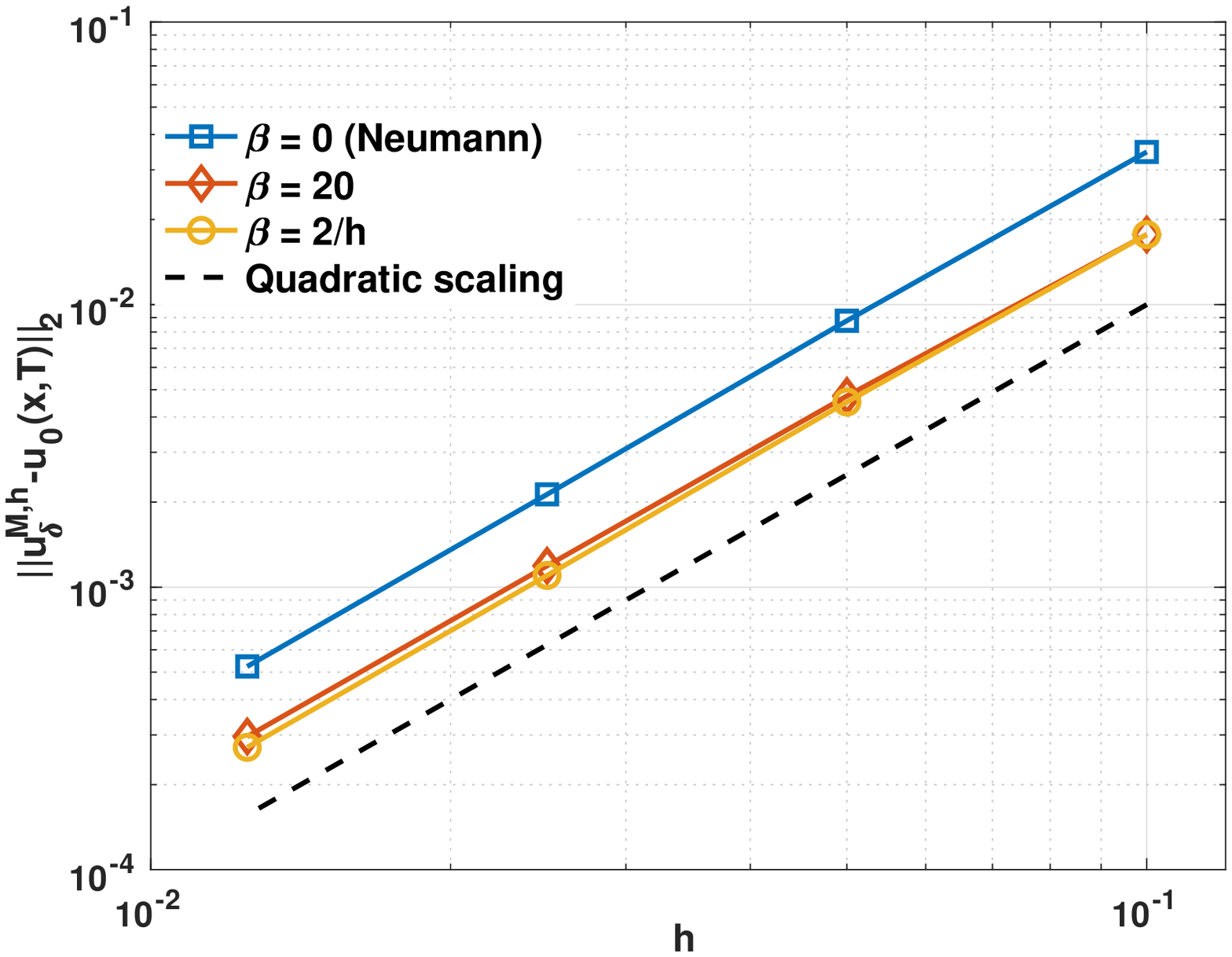}}
     \subfigure{\includegraphics[width = 0.48\textwidth]{./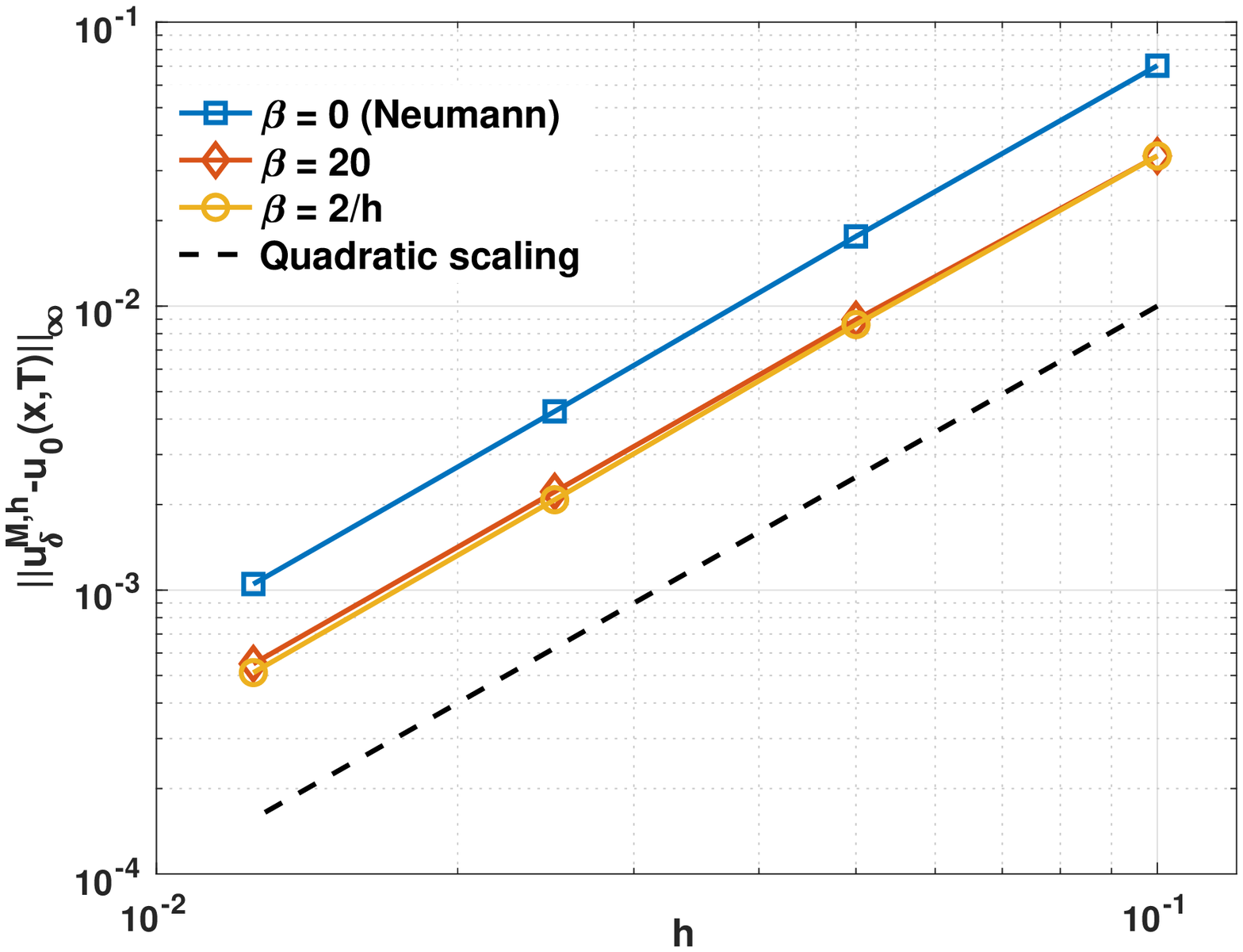}}
    \caption{Test 1 results for nonlocal boundary condition on a square domain. Left: convergence of the numerical nonlocal solution to the local limit with different Robin coefficients, in the $L^2$ norm. Right: convergence of the numerical nonlocal solution to the local limit with different Robin coefficients, in the $L^\infty$ norm.}
    \label{fig:NLline}
\end{figure}

\begin{figure}
\centering
 \subfigure{\includegraphics[width = 0.48\textwidth]{./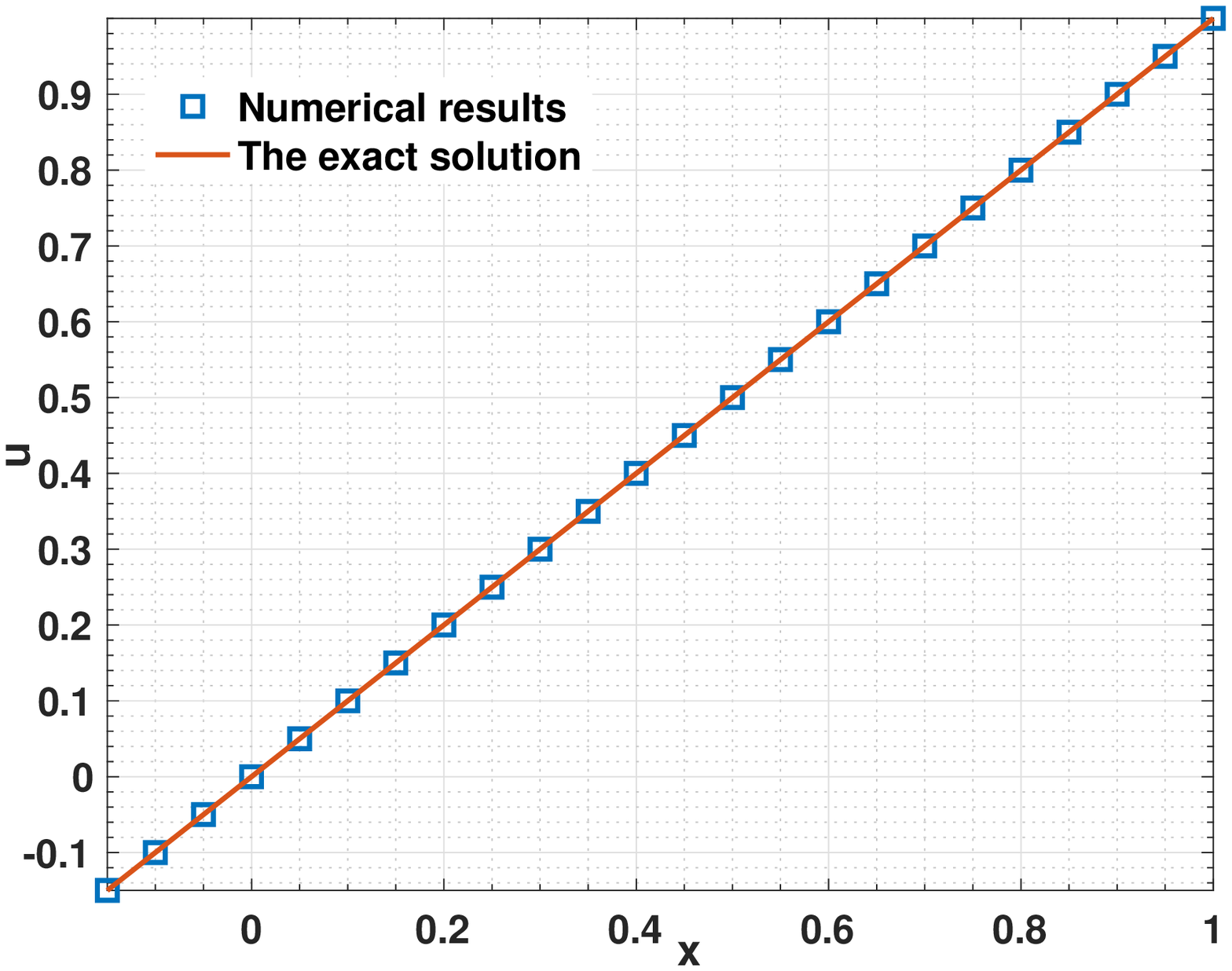}}
 \subfigure{\includegraphics[width =0.48 \textwidth]{./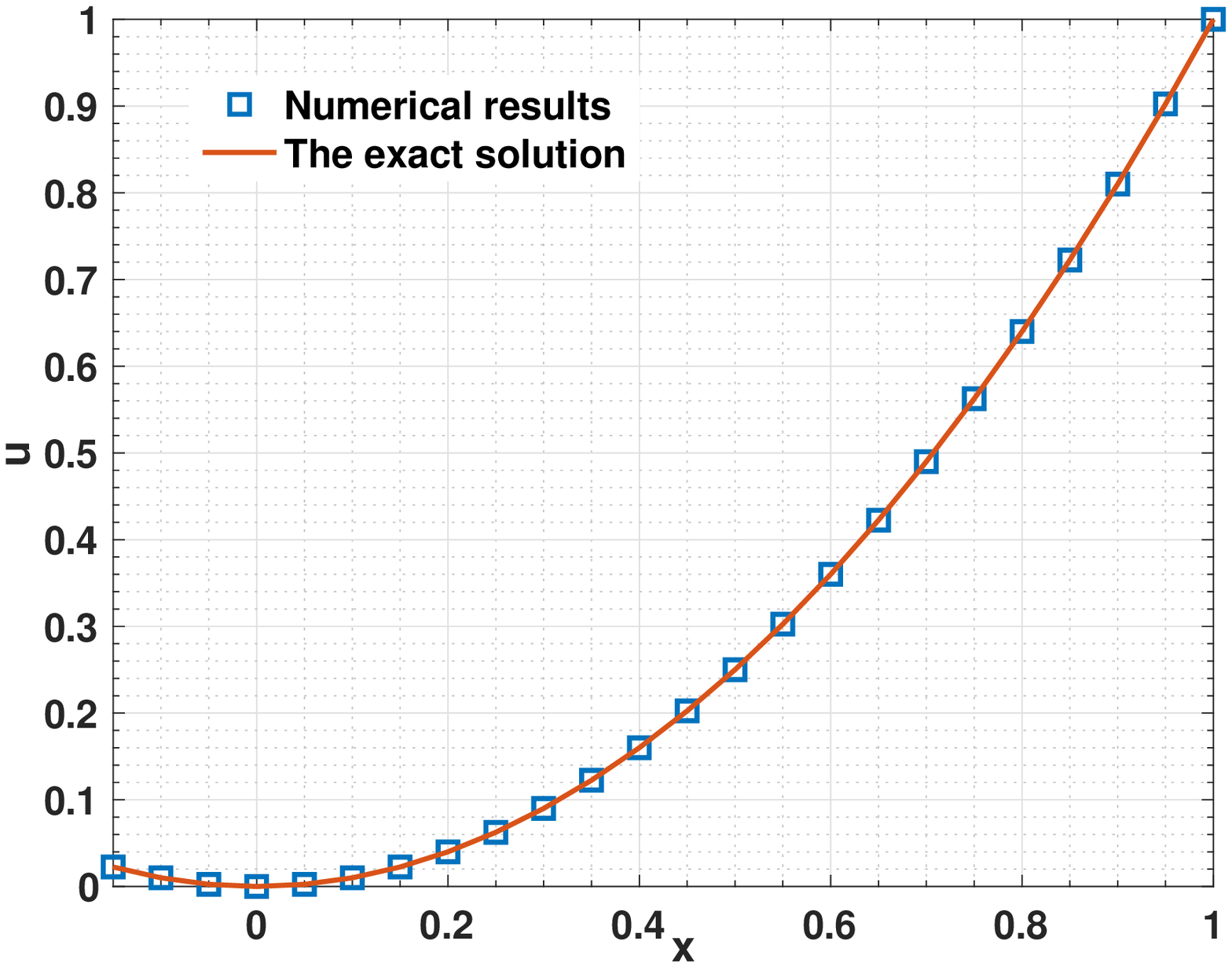}}
\caption{Test 1 results for nonlocal boundary condition on a square domain. Left: linear patch test. The comparison of numerical results with $h=1/20$, $\beta=10$ and the analytical solution $u=x$. Right: quadratic patch test. The comparison of numerical results with $h=1/20$, $\beta=10$ and the analytical  solution $u=x^2$.}
\label{fig:patchtest}
\end{figure}

\subsubsection{Test 2: a circular domain}\label{sec:test2}


We now consider a domain which has boundaries with non-zero curvature. As shown in the left side of Figure \ref{fig:couplingdomain}, we employ $\Omega_{nl} = \{(x,y)|x^2+y^2 \leq 1\}$ and $\Gamma_i=\partial\Omega_{nl}\backslash \{(0,-1)\}$, with a similar problem setting for initial condition and external loading as in test 1, namely, $u^{IC}=0$ and  $f^{nl}(x,y,t)=(2t+2t^2)\sin(x)\cos(y)$. A Robin-type boundary condition $g(x,y,t)=\beta t^2\sin(x)\cos(y)+t^2(x\cos(x)\cos(y)-y\sin(x)\sin(y))$ is applied on the sharp interface $\Gamma_i$. To make the problem well-posed  in the $\beta=0$ case, we set $u^D_{nl}(x,y,t) := t^2\sin(x)\cos(y)$ on $(x,y)=(0,-1)$. Similarly as in test 1, this problem setting has an analytical local limit $u_0(x,y,t) = t^2\sin(x)\cos(y)$. Considering $\delta/h = 3.9$, $\Delta t = 100h^2$ while decreasing the spatial discretization size $h$, the comparison of numerical nonlocal solution and the analytical local limit $u_0$ at {$T=1$} are presented in Figure \ref{fig:NLcircle}, again with three sets of Robin coefficients: (1) $\beta=0$; (2) $\beta$ is a non-zero constant; and (3) $\beta=C/h$. It can be observed that with all three sets of Robin-coefficients we have achieved $\mcO(\delta^2)$ convergence rate to the corresponding local limit. Therefore, the proposed Robin-type boundary formulation is asymptotically compatible with boundaries that have a non-zero curvature and $\delta/h=C$.

\begin{figure}
    \centering
    \subfigure{\includegraphics[width=0.48\textwidth]{./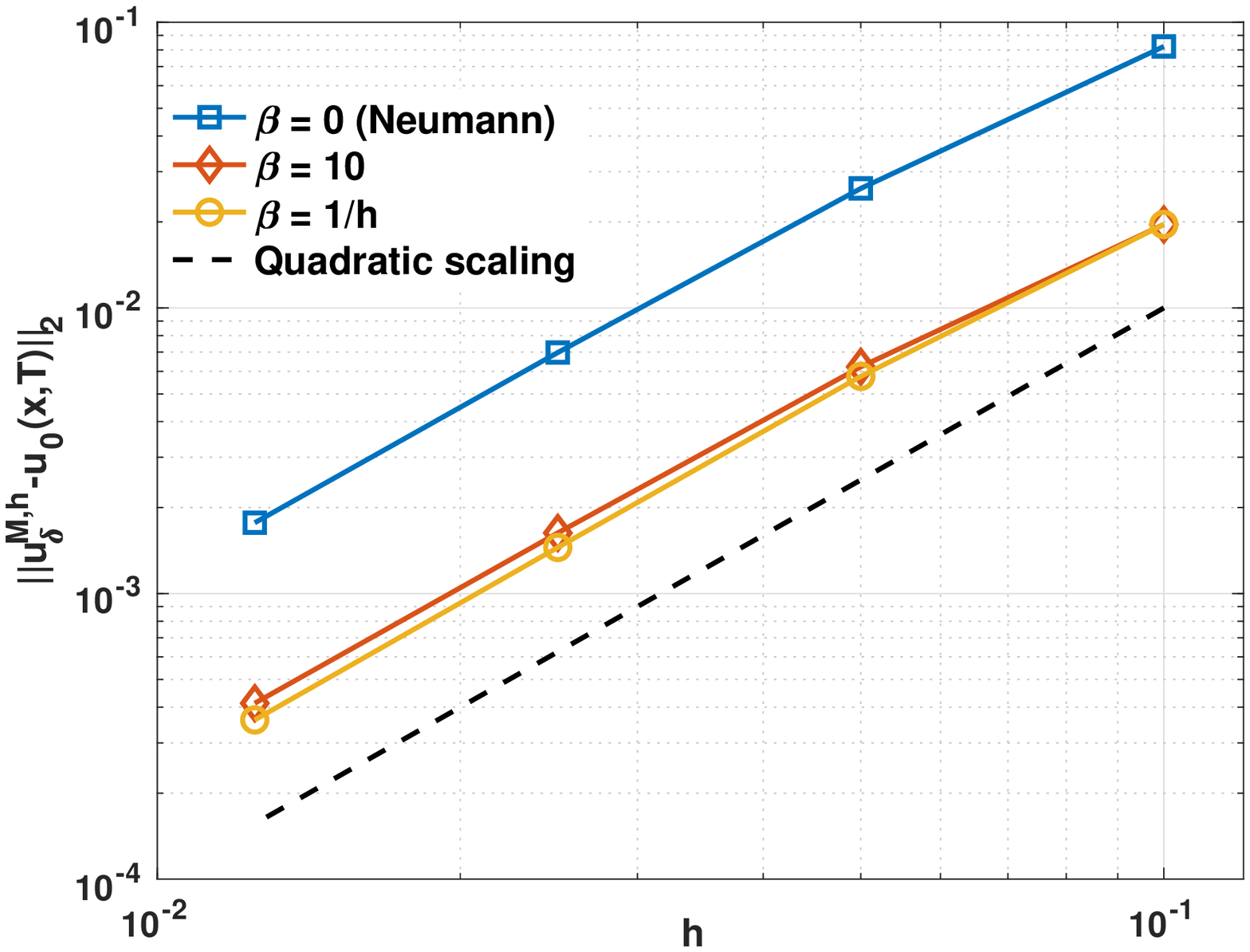}}
    \subfigure{\includegraphics[width =0.48 \textwidth]{./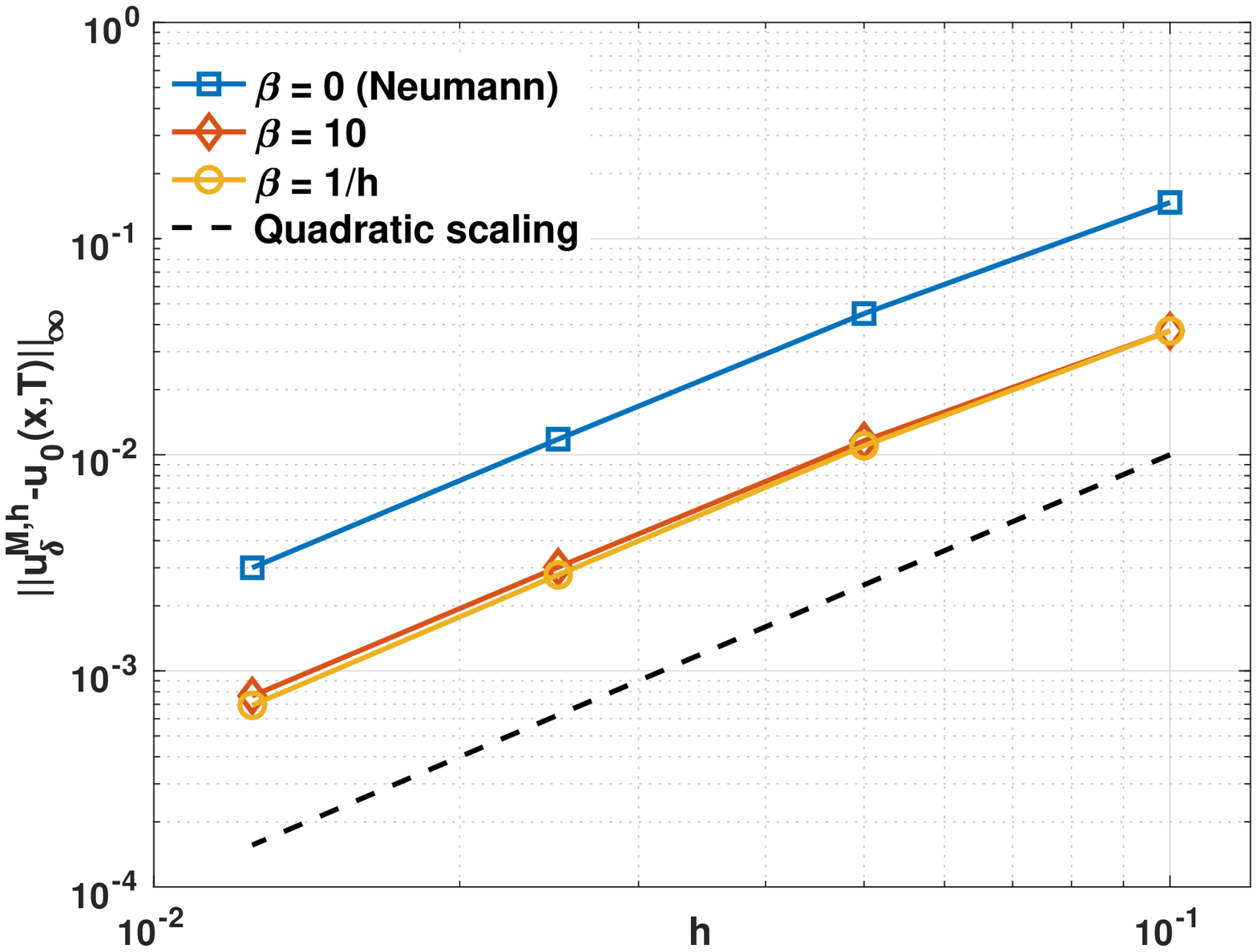}}
    \caption{Test 2 results for nonlocal boundary condition on a circular domain. Left: convergence of the numerical nonlocal solution to the local limit with different Robin coefficients, in the $L^2$ norm. Right: convergence of the numerical nonlocal solution to the local limit with different Robin coefficients, in the $L^\infty$ norm.}
    \label{fig:NLcircle}
\end{figure}

\subsubsection{Test 3: a cross-shape domain}\label{sec:test3}

We now consider a more complicated domain which doesn't satisfy the convex and $C^3$ regularity requirements in the convergence analysis of Section \ref{sec:neumann}. The domain is of cross-shape, presented as $\Omega_{nl}$ in the right plot of Figure \ref{fig:couplingdomain}. Neumann or Robin-type boundary conditions are applied everywhere over the boundary except on point $(-1,-0.5)$ where we set $u^D_{nl}(x,y,t) = t^2\sin(x)\cos(y)$, in order to make the problem well-posed on $\beta=0$ case. Note that this domain is non-convex and the boundary include corners. Therefore, for $\xb\in B\Omega_i$ within distance $\delta$ to the corner, we employ the corner formulation developed in \eqref{eqn:cornerf1}. In this test we set $u^{IC}=0$, $f^{nl}(x,y,t)=(2t+2t^2)\sin(x)\cos(y)$ and a Robin-type boundary condition $g(x,y,t)=\beta t^2\sin(x)\cos(y)+t^2 (\cos(x)\cos(y)n_x-\sin(x)\sin(y)n_y)$ is applied on the sharp interface $\Gamma_i$, where $\mathbf{n}=(n_x,n_y)$ is the outward-pointing unit normal vector on $\Gamma_i$. The analytical local limit solution for above problem setting is also $u_0(x,y,t) = t^2\sin(x)\cos(y)$. Keep a fixed ratio $\delta/h = 3.5$, $\Delta t = 100h^2$ while refining the spatial discretization length scale $h$, the $L^2$ and $L^{\infty}$ norm for the difference between numerical nonlocal solution and the analytical local limit {at $T=1$} are presented in Figure \ref{fig:NLcross}, from which a second-order convergence rate $\mcO(\delta^2)$ is observed. This example verifies the proposed corner formulation and illustrates that the proposed nonlocal Robin-type formulation also achieves asymptotic compatibility on a non-convex domain consisting of line segments and corners, which greatly improves the applicability of the proposed formulation for more complicated scenarios.

\begin{figure}
    \centering
    \subfigure{\includegraphics[width=0.48\textwidth]{./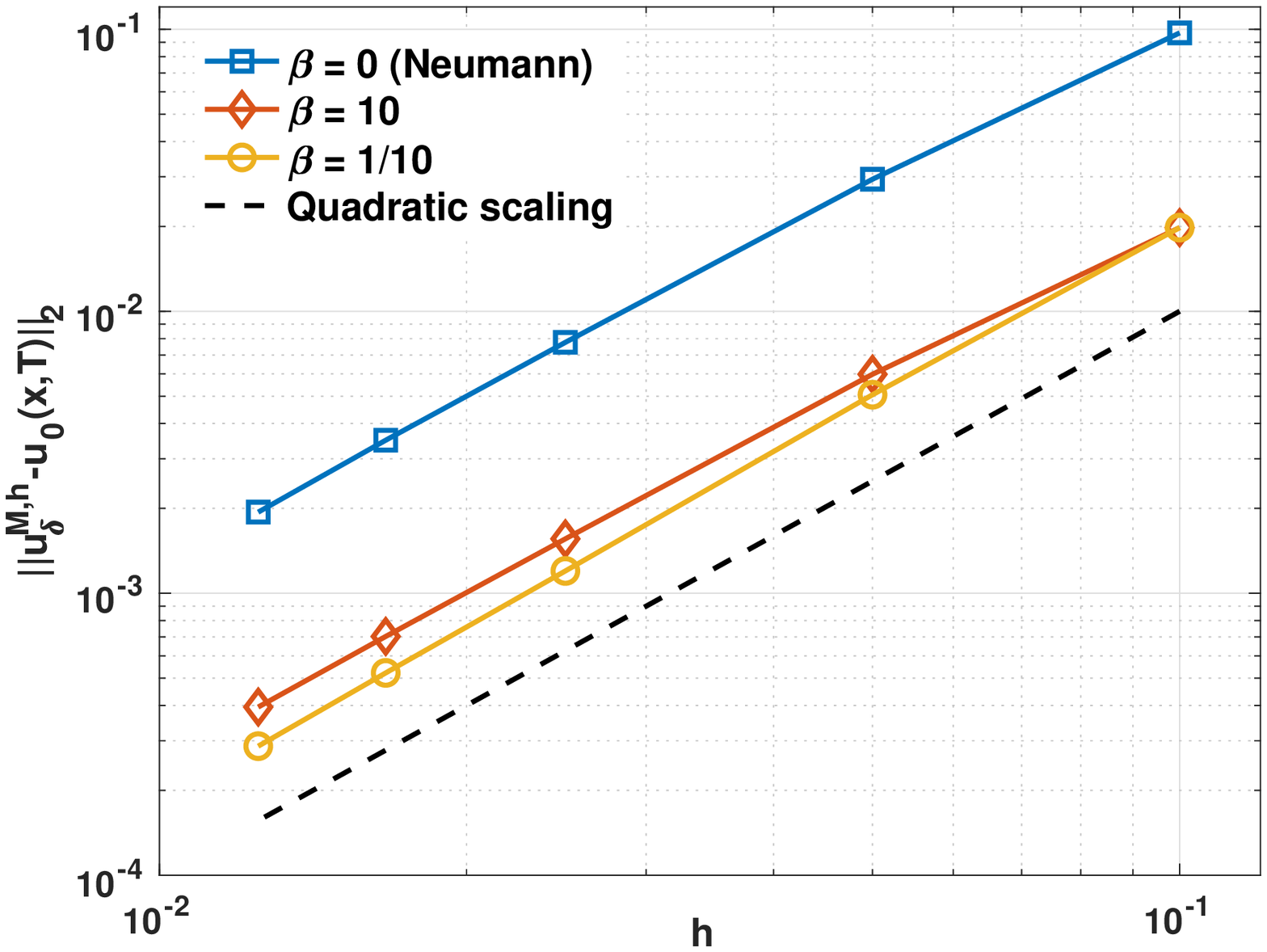}}
    \subfigure{\includegraphics[width =0.48 \textwidth]{./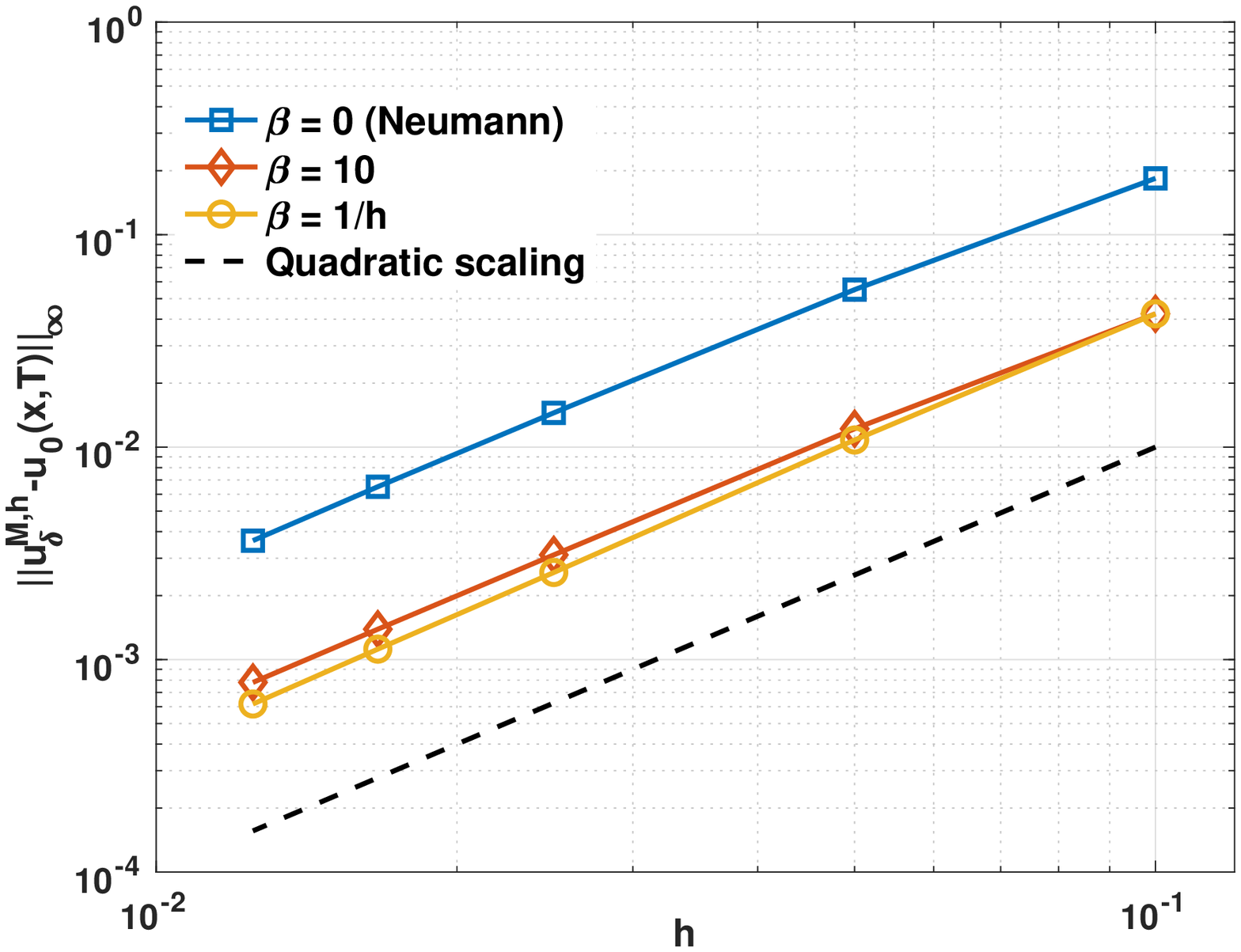}}
    \caption{Test 3 results for nonlocal boundary condition on a cross-shape domain. Left: convergence of the numerical nonlocal solution to the local limit with different Robin coefficients, in the $L^2$ norm. Right: convergence of the numerical nonlocal solution to the local limit with different Robin coefficients, in the $L^\infty$ norm.}
    \label{fig:NLcross}
\end{figure}

\section{Nonoverlapping Local-to-Nonlocal (LtN) Coupling Framework}\label{sec:couple}

In this section, we present an explicit coupling procedure for the local-to-nonlocal coupling problem without overlapping regions. In this coupling framework, a partitioned procedure is employed such that the nonlocal and local subproblems are solved separately, which allows for the reuse of existing codes/methods. {This is of particular value in the case of local-to-nonlocal coupling, since the different model classes are often solved with radically-different code architectures (although similarities have recently been pointed out between certain meshfree discretizations of local and nonlocal problems \cite{Hillman2019}).} The local and nonlocal solvers exchange transmission conditions on the sharp interface $\Gamma_i$, to enforce the continuity of solutions and the energy balance of the whole system. The partitioned procedure can be broadly classified as either explicit or implicit (see, e.g., \cite{badia2008fluid,degroote2009performance,quarteroni1999domain,mathew2008domain,toselli2006domain}). In explicit coupling strategies, {the solution of each sub-problem and the exchange of interface data are performed only once} (or a few times) per time step, while in the implicit coupling strategies an additional sub-iteration is employed at each time step and each sub-problem is solved in a partitioned way via sub-iterations until convergence. For dynamic problems, the explicit coupling strategy is generally more efficient than the implicit coupling strategy, although the former is more likely to be unstable. To develop a stable explicit coupling strategy, proper transmission conditions are required. In the current paper we propose to employ the Robin-type transmission condition, which was proven to be effective in stabilizing the explicit coupling strategy on domain decomposition problems \cite{Burman_2014,fernandez2015generalized}. Specifically, for the nonlocal subproblem, we solve for the nonlocal heat equation \eqref{eqn:nonlocaleqnRobin} with the Robin-type boundary condition applied on the interface $\Gamma_i$. For the local subproblem, we solve a classical heat equation \eqref{eqn:local} with the classical Dirichlet boundary condition applied on the interface $\Gamma_i$. To improve the stability and efficiency of the coupling framework, we propose a numerical approach to choose the Robin coefficient $\beta$.

This section is organized as follows: In Section \ref{41}, we present an explicit coupling procedure for the proposed local-to-nonlocal coupling approach, then in Section \ref{sec:Robincoef} we introduce an approach to numerically obtain the optimal Robin coefficient by minimizing the amplification factor in the discretized coupling system; To numerically verify the proposed local-to-nonlocal coupling approach, in Section \ref{42}, we investigate its performance on three different numerical tests with various configuration settings. 

\subsection{An Explicit Coupling Approach with Robin Transmission Conditions}\label{41}

In this section, we propose an explicit nonoverlapping local-to-nonlocal coupling framework, by employing the Robin transmission condition developed in Section \ref{section:robin}. Note that here we present the nonlocal model for the case without corners only, since for the case with corners one can simply replace the nonlocal formulation \eqref{eqn:formula4} with \eqref{eqn:cornerf1}.

To introduce the partitioned procedure, we consider a semi-discretized system where the backward Euler method is employed for time discretization. At time step $k$, we solve for the nonlocal solution $u_{nl,\delta}^k$ in $\Omega_{nl}$ and the local solution $u_{l}^{k}$ in $\Omega_l$ using the solutions at the previous time step, $u_{nl,\delta}^{k-1}$ and $u_{l}^{k-1}$: we first solve for $u_{nl,\delta}^{k}$ with
\begin{subequations}\label{eqn:nonlocaleqnRobinCouple}
\begin{align}
&\frac{1}{\Delta t}({u}_{nl,\delta}^k(\xb)-{u}_{nl,\delta}^{k-1}(\xb))-\alpha_{nl}\mcL_{\delta} {u}_{nl,\delta}^k(\xb)=f_{nl}(\xb,t^k),\quad \mathbf{x}\in\Omega_{nl}\backslash B\Omega_{i}\\
\nonumber&\frac{1}{\Delta t}Q_\delta(\xb)({u}_{nl,\delta}^k(\xb)-{u}_{nl,\delta}^{k-1}(\xb))-\alpha_{nl}\mcL_{N\delta} {u}_{nl,\delta}^k(\xb)+\alpha_{nl}\beta V_\delta (\xb){u}_{nl,\delta}^k(\overline{\mathbf{x}})\\
&~~~~=Q_\delta(\xb)f_{nl}(\xb,t^k)+\alpha_{nl}V_\delta (\xb)\left[\dfrac{\partial u_l^{k-1}(\overline{\xb})}{\partial \nb}+\beta u_l^{k-1}(\overline{\mathbf{x}})\right],\quad \mathbf{x}\in B\Omega_{i}\label{eqn:nonlocalbc}\\
&{u}_{nl,\delta}^k(\xb)=u^D_{nl}(\xb,t^k),\quad \mathbf{x}\in B\Omega_{d}
\end{align}
\end{subequations}
then solve for $u_{l}^{k}$ with
\begin{subequations}\label{eqn:localCouple}
\begin{align}
&\frac{1}{\Delta t}(u_{l}^{k}(\xb)-u_{l}^{k-1}(\xb))-\alpha_{l}\Delta u_{l}^{k}(\xb)=f_l(\xb,t^k),\quad \mathbf{x}\in\Omega_{l}\\
&u_{l}^{k}(\xb)=u_{nl,\delta}^k(\xb),\quad \mathbf{x}\in \Gamma_{i}\\
&u_{l}^{k}(\xb)=u_l^D(\xb,t^k).\quad \mathbf{x}\in \Gamma_{d}
\end{align}
\end{subequations}
Here $\beta$ is the Robin coefficient which is to be determined in Section \ref{sec:Robincoef} to achieve the optimal coupling performance. $\mathbf{n}$ is the normal vector on interface $\Gamma_i$ pointing from the nonlocal subdomain to the local subdomain. $Q_\delta$, $V_\delta$ are functions depending on the position of $\xb$ and the nonlocal domain geometry, with formulations given in \eqref{eqn:Q}-\eqref{eqn:S}. The nonlocal operator $L_{N\delta}$ for $\xb\in B\Omega_i$ is defined in \eqref{eqn:LN}. In this coupling problem we employ Dirichlet transmission conditions for the local problem and nonlocal Neumann or Robin transmission conditions for the nonlocal problem on the sharp local-nonlocal coupling interface $\Gamma_i$. For presentation simplicity, in the following we neglect the Dirichlet boundary conditions on $B\Omega_d$ and $\Gamma_d$ when presenting the fully-discretized formulation, and focus on the interface transmission conditions.

In the coupling formulation introduced in \eqref{eqn:nonlocaleqnRobinCouple}-\eqref{eqn:localCouple}, since different solvers are employed for the two sub-problems, the local and nonlocal grid points on $\Gamma_i$ is possibly non-conforming. Therefore, to impose the interface transmission conditions one can not simply pass the nodal values on the interface between the local and nonlocal solvers. To obtain the nonlocal Robin-type interface condition, for $\xb_j\in \chi_h\cap B\Omega_i$ we approximate the Robin condition on its projection of $\Gamma_i$ as the interpolation with the solution on local nodes. Denoting $\mathbf{U}_{nl,B\Omega_i}$ as the vector of nodal values of the nonlocal solution when $\xb_j\in \chi_h\cap B\Omega_i$, $\mathbf{U}_{nl,in}$ as the vector of the nonlocal solution on nodes $\xb_j\in \chi_h\cap (\Omega_{nl}\backslash B\Omega_i)$, $\mathbf{U}_{l,\Gamma_i}$ as the vector of nodal values of the local solution on interface $\Gamma_i$ and $\mathbf{U}_{l,in}$ as the vector of nodal values of the local solution on the interior nodes, for each $\xb_j\in \chi_h\cap B\Omega_i$ we obtain $g(\overline{\xb}_j)$ {from nodal values of the numerical local solution}:
\begin{equation}\label{eqn:interRobin}
\dfrac{\partial u_l^{k-1}(\overline{\xb}_j)}{\partial \nb}+\beta u_l^{k-1}(\overline{\mathbf{x}}_j)\approx \left[T_1\mathbf{U}^{k-1}_{l,in}+T_2\mathbf{U}^{k-1}_{l,\Gamma_i}\right]_j.
\end{equation}
Note here $T_1$ and $T_2$ are matrices formed by interpolation coefficients, and their elements linearly depend on the Robin coefficient $\beta$. With the above interpolation formulation, we can then substitute the Robin transmission condition condition $\dfrac{\partial u_l^{k-1}(\overline{\xb})}{\partial \nb}+\beta u_l^{k-1}(\overline{\mathbf{x}})$ applied on the sharp interface $\Gamma_i$ into \eqref{eqn:formula4}, and formulate the fully discretized nonlocal subproblem as the following linear system:
\begin{align}
\nonumber&\dfrac{M_{nl,in}}{\Delta t}\Ub^{k}_{nl,in}+A_{11}\Ub^{k}_{nl,in}+A_{12}\Ub^{k}_{nl,B\Omega_i}=\Fb^k_{nl,in}+\dfrac{M_{nl,in}}{\Delta t}\Ub^{k-1}_{nl,in},\\
&\dfrac{M_{nl,B\Omega_i}}{\Delta t}\Ub^{k}_{nl,B\Omega_i}+A_{21}\Ub^{k}_{nl,in}+A_{22}\Ub^{k}_{nl,B\Omega_i}+\beta\Sigma_1\Ub^{k}_{nl,B\Omega_i}=\Fb^k_{nl,B\Omega_i}+\dfrac{M_{nl,B\Omega_i}}{\Delta t}\Ub^{k-1}_{nl,B\Omega_i}+\Sigma_1T_1 \Ub^{k-1}_{l,in}+\Sigma_1T_2 \Ub^{k-1}_{l,\Gamma_i}.\label{eqn:fulldiscnl}
\end{align}
Here $M_{nl,in}$, $M_{nl,B\Omega_i}$ are the mass matrices corresponding to the nodes in $\Omega_{nl}\backslash B\Omega_i$ and in $B\Omega_i$, respectively, $\Fb^k_{nl,in}$ and $\Fb^k_{nl,B\Omega_i}$ are the external loading terms for nodes in $\Omega_{nl}\backslash B\Omega_i$ and in $B\Omega_i$, respectively, $A_{ij}$, $i,j\in\{1,2\}$, are parts of the stiffness matrices, and $\Sigma_1$ handles the $V_\delta(\xb)$ term and the mapping of each $\overline{\xb}_i$ onto the vector $\Ub_{nl,B\Omega_i}$. On the other hand, to apply the Dirichlet boundary condition on the local side, we need to interpolate the nonlocal numerical solution $\mathbf{U}^k_{nl}$ to obtain an approximation for each $u_{nl,\delta}^k(\xb_j)$ where $\xb_j\in\Gamma_i$ is a node in the local subdomain mesh. Employing the moving least square method \cite{wendland2004scattered,levin1998approximation} with support radius of size $\delta$ and quadratic basis, we reconstruct $u_{nl,\delta}^k(\xb_j)$ as a linear combination from nodal values of the nonlocal solution in $B\Omega_i$:
\begin{equation}\label{eqn:interDiri}
    u_{nl,\delta}^{k}(\xb_j) 
     \approx \left[\Sigma_2 \Ub_{nl,B\Omega_i}^{k}\right]_j.
\end{equation}
Substituting the Dirichlet transmission condition into the local subproblem \eqref{eqn:localdisc}, we then obtain a linear system for the fully discretized local subproblem:
\begin{align}
\nonumber&\dfrac{M_l}{\Delta t}{\Ub}^{k}_{l,in}+B_{11}\Ub_{l,in}^{k}+B_{12}\Ub_{l,\Gamma_i}^{k}=\Fb^{k}_{l}+\dfrac{M_l}{\Delta t}{\Ub}^{k-1}_{l,in},\\
&\Ub_{l,\Gamma_i}^{k}=\Sigma_2 \Ub_{nl,B\Omega_i}^{k}.\label{eqn:fulldiscl}
\end{align}
Here $M_l$ is the local mass matrix, $\Fb_l$ is the global vector of the external loads, and $B_{11}$, $B_{12}$ together forms the local stiffness matrix $[B_{11},B_{12}]=B_l$.


In summary, we obtain the following fully-discretized explicit local-to-nonlocal coupling algorithm: 
\begin{enumerate}
    \setlength{\itemsep}{1pt}
    \item (Both Solvers): Set initial values for $\Ub^0_{nl}$ and $\Ub^0_l$ with the given initial condition $u^{IC}(\xb)$.
    \item for $k = 1,\cdots,M=T/\Delta t$, do 
    \begin{enumerate}
        \setlength{\itemsep}{1pt}
        \item (Local Solver): Calculate the nonlocal transmission condition from the local solution $\Ub^{k-1}_l$ by perform interpolation for each $\xb_j\in \chi_h\cap B\Omega_i$ via \eqref{eqn:interRobin}. Pass the results to the nonlocal solver.
        \item (Nonlocal Solver): Solve the linear system \eqref{eqn:fulldiscnl} of the nonlocal subproblem for the vector of nodal values of the nonlocal solution $\Ub_{nl,in}^{k}$ and $\Ub_{nl,B\Omega_i}^{k}$.
        \item (Nonlocal Solver): Calculate the local transmission condition from the nonlocal solution $\Ub_{nl,B\Omega_i}^{k}$ via the interpolation formulation in \eqref{eqn:interDiri}. Pass the results to the local solver.
        \item (Local Solver): Solve the linear system \eqref{eqn:fulldiscl} of the local subproblem for the vector of nodal values of the local solution $\Ub_{l,in}^{k}$ and $\Ub_{l,\Gamma_i}^{k}$.
        \item Go to time step $k+1$.
        \end{enumerate}
\end{enumerate}

\subsection{Estimates for the Optimal Robin Coefficient}\label{sec:Robincoef}

As will be observed from the numerical tests in Section \ref{42}, the explicit coupling strategy may suffer from slow convergence or even divergence, and therefore a good choice of the Robin coefficient is a necessity. The optimal Robin coefficients can be estimated either theoretically or numerically. In problems with relatively simple and/or structured domain settings, one can perform Fourier decomposition to the analytical solution and obtain the optimal Robin coefficient by minimizing the analytic reduction factor, as shown in \cite{yu2018partitioned}. However, for the coupling problem with general geometry, deriving the analytic expression of the optimal Robin coefficient is often not straightforward, and therefore in this paper we propose a numerical approach to approximate the optimal Robin coefficient $\beta$.

To perform a stability analysis, we consider the homogeneous local-to-nonlocal coupling problem. At the $k$-th time step, the fully discretized coupling system is written as 
\begin{equation*}
    \begin{bmatrix}
    \dfrac{M_{nl,in}}{\Delta t}+A_{11} & A_{12} & 0 & 0 \\
    A_{21} & \dfrac{M_{nl,B\Omega_i}}{\Delta t}+A_{22}+\beta \Sigma_1 & 0 & 0 \\
    0 & 0 & \dfrac{M_l}{\Delta t}+B_{11} & B_{12} \\    
    0 & -\Sigma_2 & 0 & I \\
    \end{bmatrix}
    \begin{bmatrix}
    \Ub_{nl,in}^{k} \\
    \Ub_{nl,B\Omega_i}^{k}\\
    \Ub_{l,in}^{k} \\    
    \Ub_{l,\Gamma_i}^{k} \\
    \end{bmatrix}
    = \begin{bmatrix}
    \dfrac{M_{nl,in}}{\Delta t} & 0 & 0 & 0 \\
    0 & \dfrac{M_{nl,B\Omega_i}}{\Delta t} & \Sigma_1T_1 & \Sigma_1T_2 \\
    0 & 0 & \dfrac{M_l}{\Delta t} & 0 \\
    0 & 0 & 0 & 0 \\
    \end{bmatrix}
    \begin{bmatrix}
    \Ub_{nl,in}^{k-1} \\
    \Ub_{nl,B\Omega_i}^{k-1}\\
    \Ub_{l,in}^{k-1} \\    
    \Ub_{l,\Gamma_i}^{k-1} \\
    \end{bmatrix}.
\end{equation*}
Here the first row corresponds to the discretized nonlocal equation of the interior region, the second row represents the modified nonlocal formulation in $B\Omega_i$ with the nonlocal Robin-type transmission condition, the third row corresponds to the discretized local equation of the interior local nodes, and the last row applies the Dirichlet transmission condition at the interface on the local side. Denoting the matrix $\Lambda$ as 
\begin{equation}\label{eqn:lambda}
    \Lambda = \begin{bmatrix}
    \dfrac{M_{nl,in}}{\Delta t}+A_{11} & A_{12} & 0 & 0 \\
    A_{21} & \dfrac{M_{nl,B\Omega_i}}{\Delta t}+A_{22}+\beta \Sigma_1 & 0 & 0 \\
    0 & 0 & \dfrac{M_l}{\Delta t}+B_{11} & B_{12} \\    
    0 & -\Sigma_2 & 0 & I \\
    \end{bmatrix}^{-1}
    \begin{bmatrix}
    \dfrac{M_{nl,in}}{\Delta t} & 0 & 0 & 0 \\
    0 & \dfrac{M_{nl,B\Omega_i}}{\Delta t} & \Sigma_1T_1 & \Sigma_1T_2 \\
    0 & 0 & \dfrac{M_l}{\Delta t} & 0 \\
    0 & 0 & 0 & 0 \\
    \end{bmatrix},
\end{equation}
and $\{\lambda_i\}$ as the eigenvalues of the matrix $\Lambda$, in the homogeneous coupling system the magnitude of $\lambda_i$ characterizes the convergence rate of the error component along the $i$-th eigenvector, and the fully discretized coupling system is stable when the magnitudes of all $\lambda_i$ are bounded by $1$. Therefore, we define an amplification factor as $\underset{i}\max |\lambda_i|$, then numerically obtain the optimal Robin coefficient $\beta$ by minimizing the reduction factor:
\begin{equation}\label{eqn:beta}
    \beta = \underset{\tilde{\beta}\geq 0}{\operatorname{arg\,min}}\left(\max_i\vert\lambda_i(\tilde{\beta})\vert\right).
\end{equation}

\begin{remark}\label{remark:3}
The expression of $\Lambda$ in  \eqref{eqn:lambda} indicates that the optimal Robin coefficient may depend on the local and nonlocal subdomains, the time step size $\Delta t$, the local and nonlocal discretization methods and the spatial discretization length scale $h$, and the diffusivity parameters $\alpha_l$ and $\alpha_{nl}$. For systems with large degree of freedoms, the matrix $\Lambda$ is of size $(DOF_{nl}+DOF_l)^2$ which might make the calculation of eigenvalues unfeasible. However, two observations make the proposed numerical approach applicable for large local-to-nonlocal coupling systems:
\begin{itemize}
\item The matrix $\Lambda$ is independent of time and therefore the analysis on amplification factor only needs to be performed once.
\item In numerical tests of Section \ref{42}, we have observed that when taking the CFL-like condition $\Delta t=\mcO(h^2)$ the optimal Robin coefficient $\beta$ scales with the spatial discretization length scale $h$ as $\beta=\mcO(1/h)$. This finding was also suggested in literatures on applying Robin transmission conditions in other domain-decomposition problems, such as in fluid--structure interaction problems (see, e.g., \cite{Burman_2014}).
\end{itemize}
Therefore, for a dynamic local-to-nonlocal coupling problem, one only need to calculate the optimal Robin coefficient $\beta_0$ once on a coarse grid with spatial discretization length scale $h_0$ with the same domain settings. A scaled Robin coefficient $\beta=\dfrac{\beta_0 h_0}{h}$ can then be employed in the final simulation with spatial discretization length scale $h$.
\end{remark}


\begin{remark}\label{remark:cfl}
In the explicit coupling strategy \eqref{eqn:nonlocaleqnRobinCouple}-\eqref{eqn:localCouple}, since the transmission condition $\left[\dfrac{\partial u_l^{k-1}(\overline{\xb})}{\partial \nb}+\beta u_l^{k-1}(\overline{\mathbf{x}})\right]$ from the local side is generated by the local solution from the last time step, the Robin-type transmission condition \eqref{eqn:nonlocalbc} results in a splitting error. When employing the Robin coefficient $\beta=\mcO(1/h)$ and considering piecewise linear finite elements in the explicit coupling strategy of two classical local heat equations, this splitting error was reported to be of order $\dfrac{\Delta t}{h}$ (see, e.g., \cite{Burman_2014,burman2019stability}) in $L^2$ error estimates. Therefore, the time step $\Delta t$ has to be chosen small enough compared to the spatial discretization size $h$. For instance, under a ``CFL-like'' condition $\Delta t=\mcO(h^2)$ the splitting error is expected to be of order $\mcO(h)$.
\end{remark}

\subsection{Numerical Results for Local-to-Nonlocal Coupling Framework}\label{42}

In this section, we present a series of numerical tests using the proposed local-to-nonlocal coupling framework, where the nonlocal subdomain is either adjacent to the local subdomain (as shown in the left plot of Figure \ref{fig1}) or embedded in the local subdomain (as shown in the right plot of Figure \ref{fig1}). Specifically, three types of representative domain decomposition settings are employed: (1) In Section \ref{sec:test1couple},  we consider a square nonlocal subdomain which is adjacent to a square local subdomain on one side. The coupling configuration is illustrated in the left plot of Figure \ref{fig1}. (2) In Section \ref{sec:test2couple}, we demonstrate the case with a circular nonlocal subdomain fully embedded in a square local subdomain, as shown in the left plot of Figure \ref{fig:couplingdomain}. (3) To investigate the coupling framework performance on complicated domain settings we consider a cross-shape nonlocal subdomain embedded in a square local subdomain in Section \ref{sec:test3couple}. An illustration of the settings is shown in the right plot of Figure \ref{fig:couplingdomain}. With these tests, we aim to provide a validation for our analysis of the optimal Robin coefficient $\beta$, and to demonstrate the capability of our coupling framework in handling both homogeneous ($\alpha_l=\alpha_{nl}$) and heterogeneous ($\alpha_l\neq\alpha_{nl}$) local-to-nonlocal coupling systems with non-trivial domain configuration settings. Moreover, to demonstrate the asymptotic convergence of the propose coupling approach when the nonlocal interaction region $\delta\rightarrow 0$, in this section we also demonstrate the ``M-convergence'' of the coupling framework by fixing the ratio of $\delta$ and the spatial discretization length $h$ and take $h\rightarrow 0$. As discussed in Remark \ref{remark:cfl}, to provide an $\mcO(h)$ bound for the splitting error introduced in the explicit coupling strategy, in all tests we choose the time step size $\Delta t$ following a ``CFL-like'' condition $\Delta t=\mcO(h^2)$.

\subsubsection{LtN Test 1: coupling problem with a straight line interface}\label{sec:test1couple}

As the first local-to-nonlocal coupling (LtN) test, we consider a local-to-nonlocal coupling problem where the local and nonlocal subdomains are adjacent, as demonstrated in the left plot of Figure \ref{fig1}. Specifically, in this test we set $\Omega_{nl}=[0,1] \times [0,1]$, $\Omega_{nl}=[1,2] \times [0,1]$ and demonstrate the numerical performance of the coupling framework on both $\alpha_l=\alpha_{nl}$ and $\alpha_l\neq\alpha_{nl}$ cases. For each case we first investigate the optimal Robin coefficient $\beta$ following the numerical approach introduced in \eqref{eqn:beta}, then employ the optimal $\beta$ to study the asymptotic convergence of the numerical solution to the local limit when $\Delta t,\delta,h\rightarrow 0$. We also employ linear/quadratic analytical solutions on both subdomains to investigate the patch-test consistency of the proposed coupling method.

We first consider the $\alpha_l=\alpha_{nl}$ case by assuming $\alpha_l=\alpha_{nl}=1$, without loss of generality. In this test, the initial temperature $u^{IC}=0$ in the whole domain. In the nonlocal exterior boundary $B\Omega_d$ and the local exterior boundary $\Gamma_d$, prescribed Dirichlet boundary conditions 
\begin{equation}\label{eqn:caseAdiri}
u^{D}_{nl}(x,y,t)=t\sin(x)\cos(y),\quad u^{D}_{l}(x,y,t)=t\sin(x)\cos(y),
\end{equation}
are applied. The external loadings are set as
\begin{equation}\label{eqn:caseAf}
f_{nl}(x,y,t)=(1+2t)\sin(x)\cos(y), \text{ for } (x,y)\in \Omega_{nl},\quad f_l(x,y,t)=(1+2t)\sin(x)\cos(y), \text{ for }(x,y)\in \Omega_{l}.
\end{equation}
This problem has the following analytic solution for the local subproblem:
\begin{equation}\label{eqn:caseul}
u_l(x,y,t) = t\sin(x)\cos(y),\; (x,y)\in \Omega_{l},
\end{equation}
which coincides with the analytical expression of local limit of the nonlocal solution, i.e.,
\begin{equation}\label{eqn:caseu0}
\lim_{\delta\rightarrow 0} u_{nl,\delta}(x,y,t)=u_0(x,y,t)=t\sin(x)\cos(y),\;(x,y)\in \Omega_{nl}.
\end{equation}
Taking $\Delta t = 10 h^2$ and $\delta /h = 3.9$, in the left plot of Figure \ref{fig:line_infconv} the amplification factor $\underset{i}\max |\lambda_i|$ for the discretized coupling system is plotted versus the Robin coefficient $\beta$, for two different spatial discretization length scales $h=1/10$ and $h=1/20$. It can be observed that when $h=1/10$, $\underset{i}\max |\lambda_i|$ achieves the minimum when $\beta=3$; when the spatial discretization size $h$ is decreased to $1/20$, the minimum of $\underset{i}\max |\lambda_i|$ occurs at $\beta=6$. Therefore, the amplification factor analysis suggests $\beta=\dfrac{3}{10h}$ for this problem setting with spatial discretization length scale $h$. To verify the analysis of $\beta$ and investigate the asymptotic compatibility of the coupling framework, in the right plot of Figure \ref{fig:line_infconv} we demonstrate the convergence of the numerical solution to the local limit, i.e., to $u_0$ and $u_l$, in the $L^{\infty}$ norm at $T=1$. Five difference Robin coefficients $\beta=0$, $\dfrac{3}{10h}$, $\dfrac{7}{10h}$, $\dfrac{10}{h}$ and $\dfrac{100}{h}$ are considered. We can see that when $\beta=\dfrac{3}{10h}$, the convergence rate $\mcO(h)=\mcO(\delta)$ is achieved and the numerical solution has the fastest convergence to the local limit. On the other hand, when taking the Neumann-type transmission condition $\beta=0$, the coupling framework is unstable for small $h$. This is consistent with the amplification factor analysis on the left plot of Figure \ref{fig:line_infconv} where $\underset{i}\max |\lambda_i|$ exceeds $1$ for $h=1/20$ and $\beta=0$. When taking large values of $\beta$, the amplification factor $\underset{i}\max |\lambda_i|$ gets close to $1$, which is also consistent with the slow convergence observed in the $\beta=\dfrac{100}{h}$ case in the right plot of Figure \ref{fig:line_infconv}.

\begin{figure}[ht!]
    \centering
    \begin{minipage}{0.5\textwidth}
    \includegraphics[width = \textwidth]{./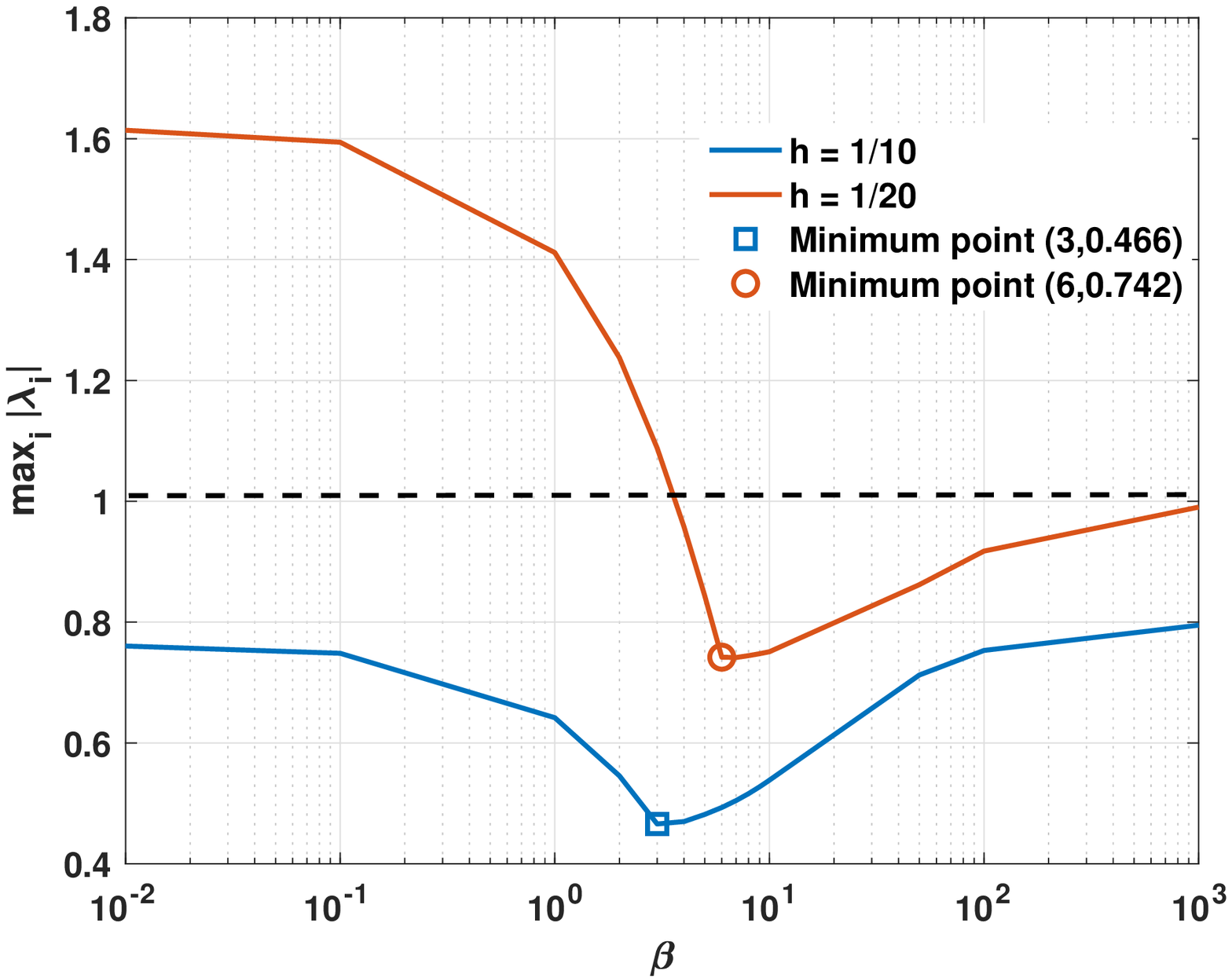}
    \end{minipage}%
    \begin{minipage}{0.5\textwidth}
    \includegraphics[width = \textwidth]{./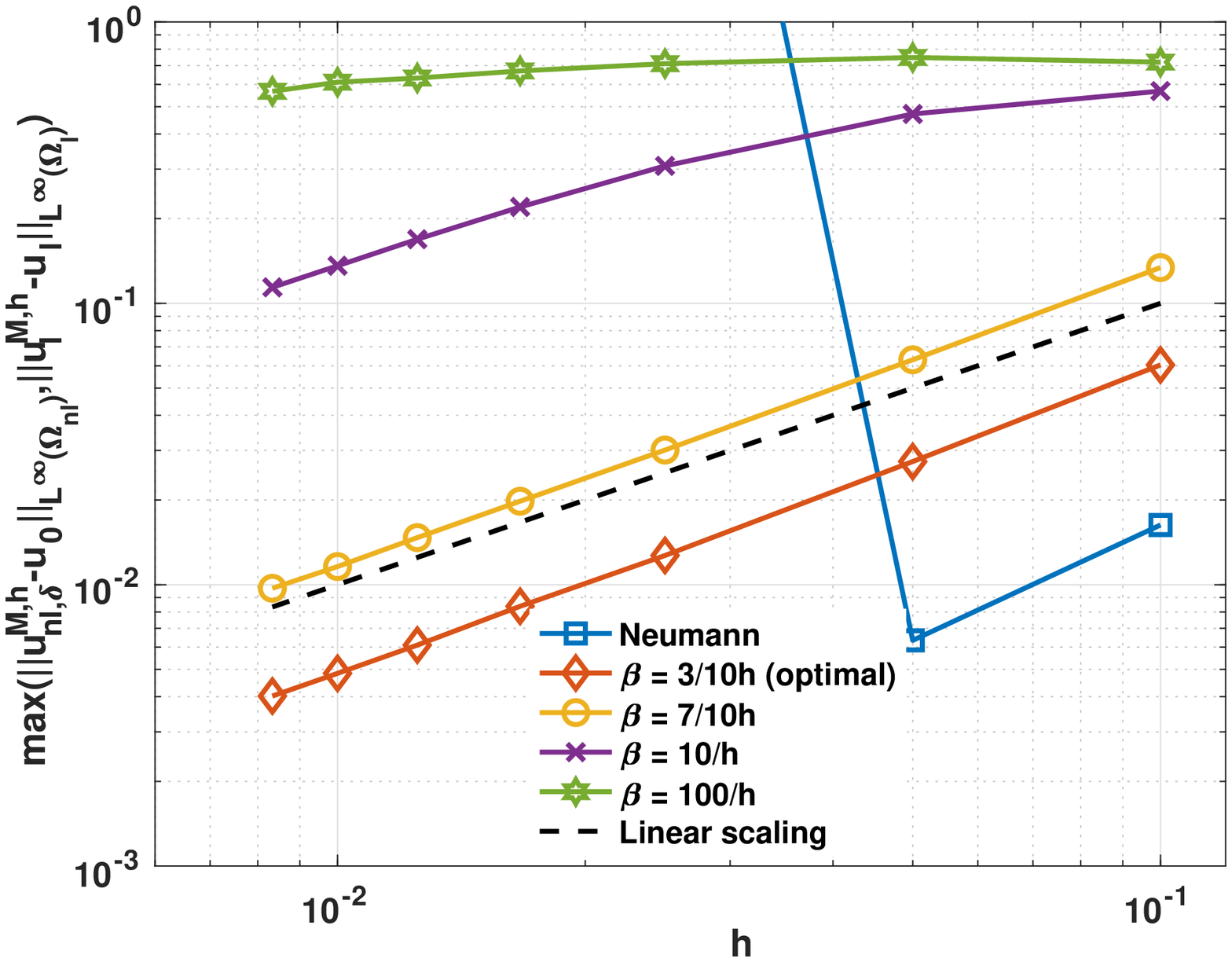}
    \end{minipage}
    \caption{LtN test 1 results for coupling problem with a straight line interface when $\alpha_{nl}=\alpha_l=1$. Left: the amplification factor $\underset{i}\max|\lambda_i|$ as a function of Robin coefficient $\beta$, when $h=\{1/10,1/20\}$. Right: convergence of the numerical solution to the local limit with different Robin coefficients, in the $L^\infty$ norm.}
    \label{fig:line_infconv}
\end{figure}

To illustrate the performance of the non-overlapping coupling framework in handling heterogeneous domains with jumps in physical properties, we now consider a coupling problem with different diffusivities in two subdomains. Specifically, we set $\alpha_{nl} = 1$ and $\alpha_l = 2$, and consider two problem settings:
\begin{itemize}
    \item \textbf{Heterogeneous domain setting A:} 
\begin{align*}
&u^{IC}(x,y)=0, \text{ for } (x,y)\in \Omega_{nl}\cup \Omega_l,\\
&u^{D}_{nl}(x,y,t)=t\sin(x)\cos(y) \text{ for } (x,y) \in B\Omega_{d}, \quad u^D_{l}(x,y,t) = t\sin(x)\cos(y), \text{ for } (x,y)\in \Gamma_{d},\\
&f_{nl}(x,y,t) = (1+2t)\sin(x)\cos(y), \text{ for } (x,y) \in \Omega_{nl}, \quad f_{l}(x,y,t) = (1+4t)\sin(x)\cos(y), \text{ for } (x,y)\in \Omega_{l},\\
&\lim_{\delta\rightarrow 0}u_{nl,\delta}(x,y,t)=u_{0}(x,y,t)=t\sin(x)\cos(y) \text{ for } (x,y) \in \Omega_{nl}, \quad u_{l}(x,y,t) = t\sin(x)\cos(y), \text{ for } (x,y)\in \Omega_{l}.
\end{align*}
    \item \textbf{Heterogeneous domain setting B:} 
\begin{align*}
&u^{IC}(x,y)=0, \text{ for } (x,y)\in \Omega_{nl}\cup \Omega_l,\\
&u^{D}_{nl}(x,y,t)=tx^4 \text{ for } (x,y) \in B\Omega_{d}, \quad u^D_{l}(x,y,t) = t(3x^2-2x), \text{ for } (x,y)\in \Gamma_{d},\\
&f_{nl}(x,y,t) = x^2(x^2-12t), \text{ for } (x,y) \in \Omega_{nl}, \quad f_{l}(x,y,t) = 3x^2-2x-12t, \text{ for } (x,y)\in \Omega_{l},\\
&\lim_{\delta\rightarrow 0}u_{nl,\delta}(x,y,t)=u_{0}(x,y,t)=tx^4 \text{ for } (x,y) \in \Omega_{nl}, \quad u_{l}(x,y,t) = t(3x^2-2x), \text{ for } (x,y)\in \Omega_{l}.
\end{align*}
\end{itemize}
In both settings we keep a fixed ratio $\delta/h = 3.9$ and take the time step size $\Delta t = 10h^2$. Note here in setting A, the local limit of $u_{nl,\delta}$ coincides with the analytical local solution $u_l$, although there is a discontinuity of the external loading across the interface $\Gamma_i$. In setting B, besides the discontinuous external loading, when $\delta\rightarrow 0$ the analytical local limit is also not smooth on the interface $\Gamma_i$. However, since $u_0=u_l$ and $\dfrac{\partial u_0}{\partial \mathbf{n}}=\dfrac{\partial u_l}{\partial \mathbf{n}}$ on $\Gamma_i$, the analytical local limit in setting B still satisfies the classical Dirichlet, Neumann and Robin transmission conditions. 

As discussed in Remark \ref{remark:3}, since the eigenvalues of $\Lambda$ depend on $\Delta t$, $h$, $\Omega_{nl}$, $\Omega_l$, $\alpha_l$ and $\alpha_{nl}$, setting A and setting B should have the same optimal Robin coefficient $\beta$, and this optimal $\beta$ differs from the optimal $\beta$ we have obtained in the test on homogeneous domain \eqref{eqn:caseAdiri}-\eqref{eqn:caseu0}. In Figure \ref{fig:line2robin}, we investigate the optimal Robin coefficient $\beta$ for heterogeneous domain problem by plotting the amplification factor $\underset{i}\max |\lambda_i|$ as a function of $\beta$ for fixed spatial discretization size $h=1/10$ and $h=1/20$. It can be seen that the minimum of $\underset{i}\max |\lambda_i|$ occurs at $\beta=\dfrac{4}{10h}$. To numerically verify the choice of optimal Robin coefficient and to study the asymptotic convergence of the analytical solution, in Figure \ref{fig:line2upoly} we demonstrate the convergence results of numerical solution to the analytical local limit at time $T =1$, for both problem setting A (in the left plot) and problem setting B (in the right plot). Among five different values of Robin coefficient $\beta$, it is observed that the optimal convergence $\mcO(h)=\mcO(\delta)$ is achieved at $\beta=\dfrac{4}{10h}$ -- the optimal coefficient suggested in the amplification factor analysis. Slow convergence and divergent results are also observed when taking large $\beta$ and $\beta=0$, respectively. This observation is also consistent with the amplification factor analysis in Figure \ref{fig:line2robin}, and it further indicates that choosing a proper Robin coefficient is of critical for the numerical stability and the asymptotic convergence rate to the local limit in the proposed coupling framework.

\begin{figure}[ht!]
    \centering
    \begin{minipage}{0.5\textwidth}
    \includegraphics[width = \textwidth]{./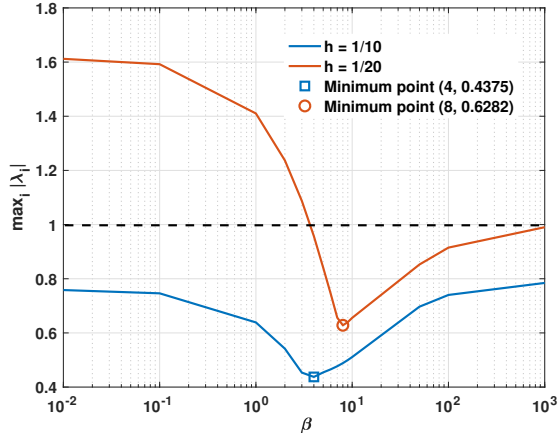}
    \end{minipage}
    \caption{LtN test 1 results with the heterogeneous domain setting ($\alpha_{nl}=1$, $\alpha_l=2$):  the amplification factor $\underset{i}\max  |\lambda_i|$ as a function of Robin coefficient $\beta$ when $h=\{1/10,1/20\}$.}
    \label{fig:line2robin}
\end{figure}

\begin{figure}[ht!]
    \centering
    \begin{minipage}{0.49\textwidth}
    \includegraphics[width = \textwidth]{./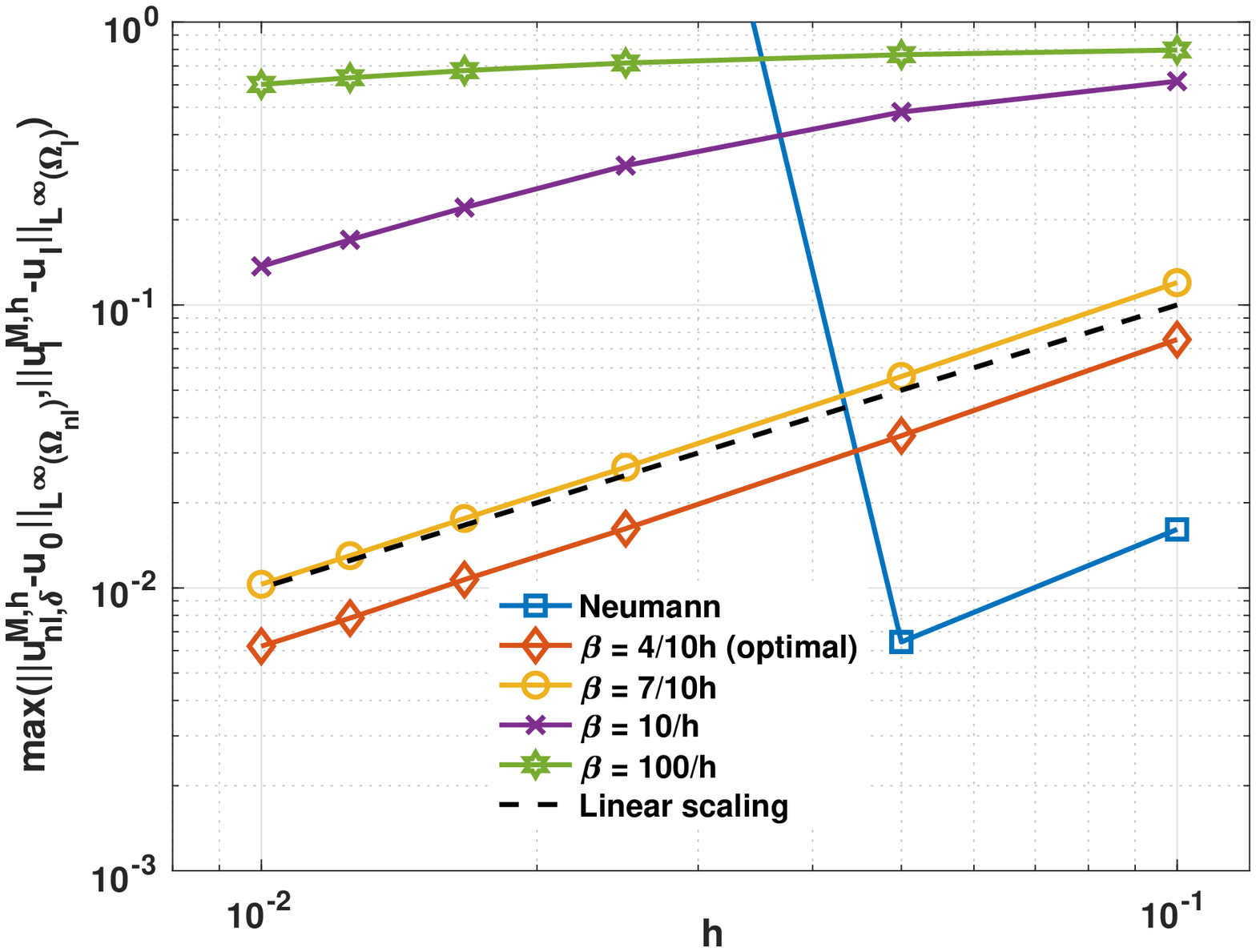}
    \end{minipage}
    \begin{minipage}{0.49\textwidth}
    \includegraphics[width = \textwidth]{./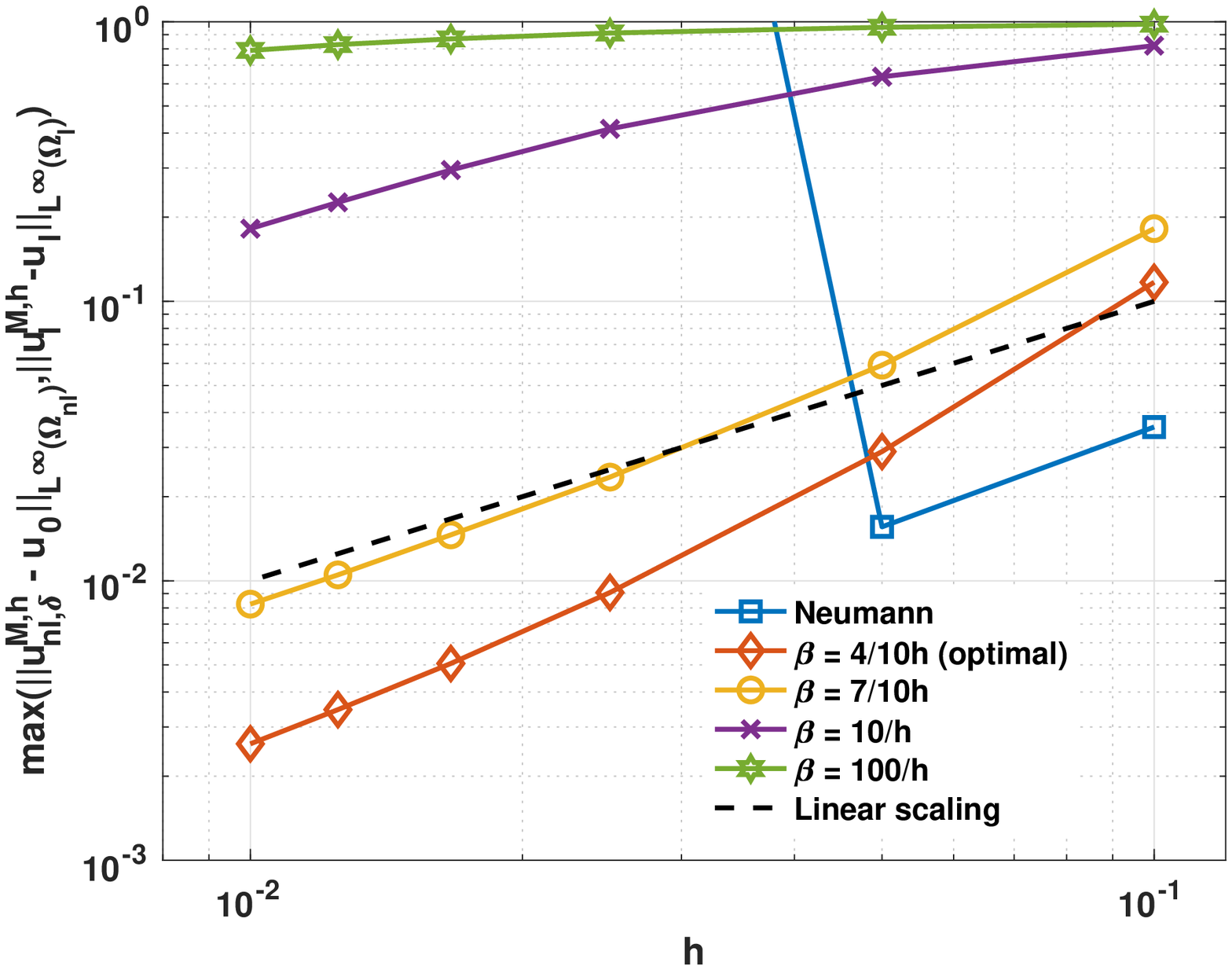}
    \end{minipage}
    \caption{LtN test 1 results for convergence of the numerical solution to the local limit with different Robin coefficients, on heterogeneous domain setting ($\alpha_{nl}=1$, $\alpha_l=2$). Left: convergence in the $L^{\infty}$ norm with problem setting A. Right: convergence in the $L^{\infty}$ norm with problem setting B.}
    \label{fig:line2upoly}
\end{figure}

Lastly, we study the patch-test consistency of the proposed local-to-nonlocal coupling framework, by employing fabricated analytical solutions such that the local and nonlocal analytical solutions, $u_l$ and $u_{nl,\delta}$, coincide. In a patch-test consistent coupling framework, the local and nonlocal subproblems by coupling the corresponding models should still return the same problem solution. We first take a linear analytical solution $u_{nl,\delta}(x,y,t)=u_{nl}(x,y,t)= x$ and plot the numerical solution along the middle line $y=1/2$ in the left plot of Figure \ref{fig:linearpatch}. In this test we take $\alpha_l=\alpha_{nl}=1$, $h=1/20$, $\delta=3.9h$, $\Delta t=10h^2=1/40$ and the optimal Robin coefficient $\beta=6$. It is observed that the linear patch test results are in good agreement with the expected linear solution, and the numerical solution is of machine accurate. To further check the quadratic patch test consistency, we take a quadratic analytical solution $u_{nl,\delta}(x,y,t)=u_{nl}(x,y,t)= x^2$ and plot the numerical solution along the middle line $y=1/2$ in the right plot of Figure \ref{fig:linearpatch}. Although the numerical solution visually agrees well with the analytical solution, we observe a numerical error since $x^2$ doesn't belong to the space of piecewise linear finite elements, and therefore the discretization method for the local subproblem introduces a numerical error. In Table \ref{tab:quapatch} we demonstrate the numerical errors in both the $L^2$ norm and $L^\infty$ norm with different combinations of $h$ and $\beta$. In all tests we take time step $\Delta t=10h^2$. The results show that the numerical error is almost independent of $\beta$ and it converges linearly with decreasing $h$. To further confirms that the numerical error in quadratic patch test is introduced by the linear finite element method, we employ quadratic finite elements for the local subproblem solver and provide the results in Table \ref{tab:sopatch}. The numerical results show that the coupling framework achieves machine accuracy. Therefore, when $x^2$ is in the space of finite elements, the proposed coupling framework passes both linear and quadratic patch tests.

\begin{figure}[ht!]
    \centering
    \begin{minipage}{.5\textwidth}
    \includegraphics[width = \textwidth]{./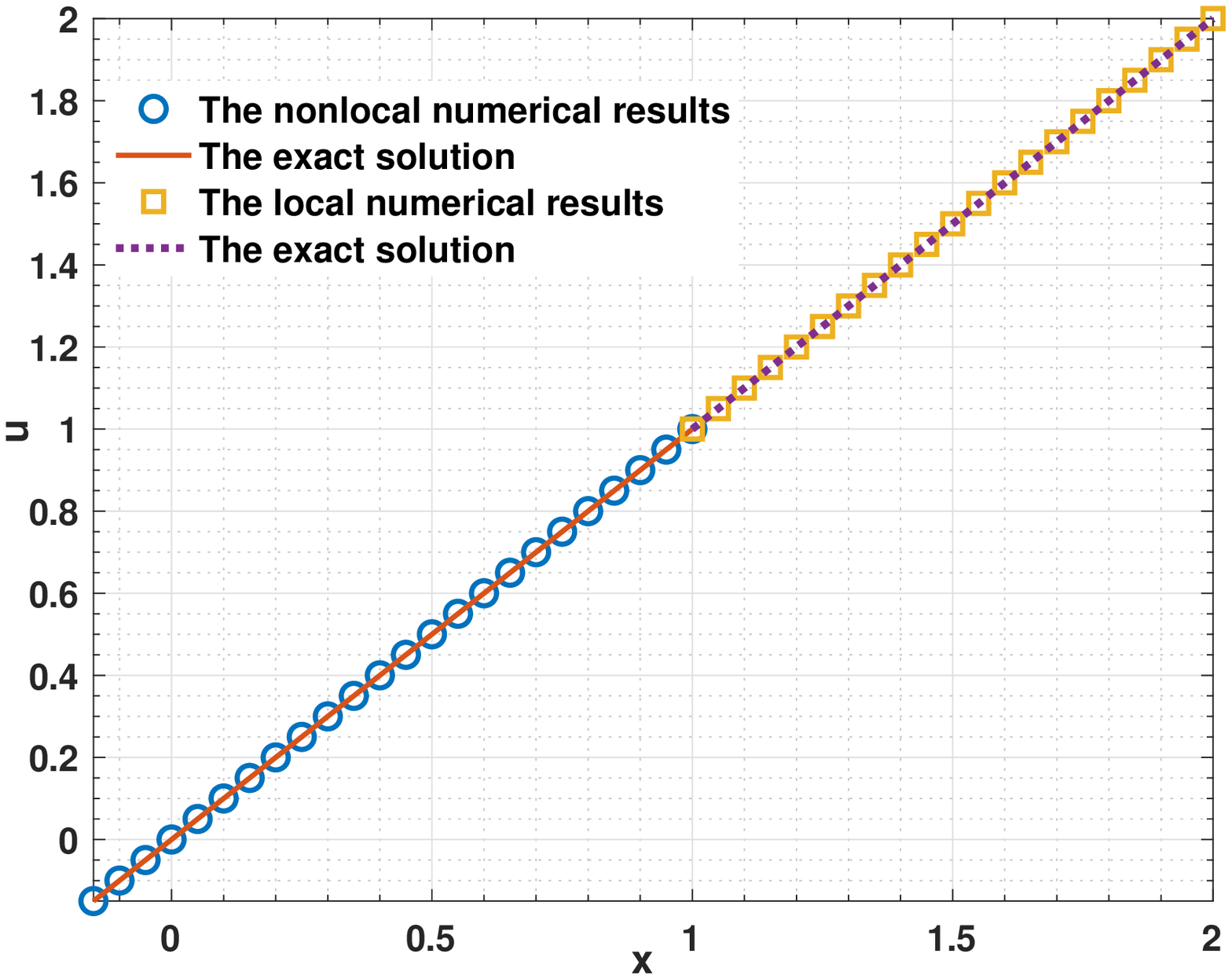}
    \end{minipage}%
    \begin{minipage}{.5\textwidth}
    \includegraphics[width = \textwidth]{./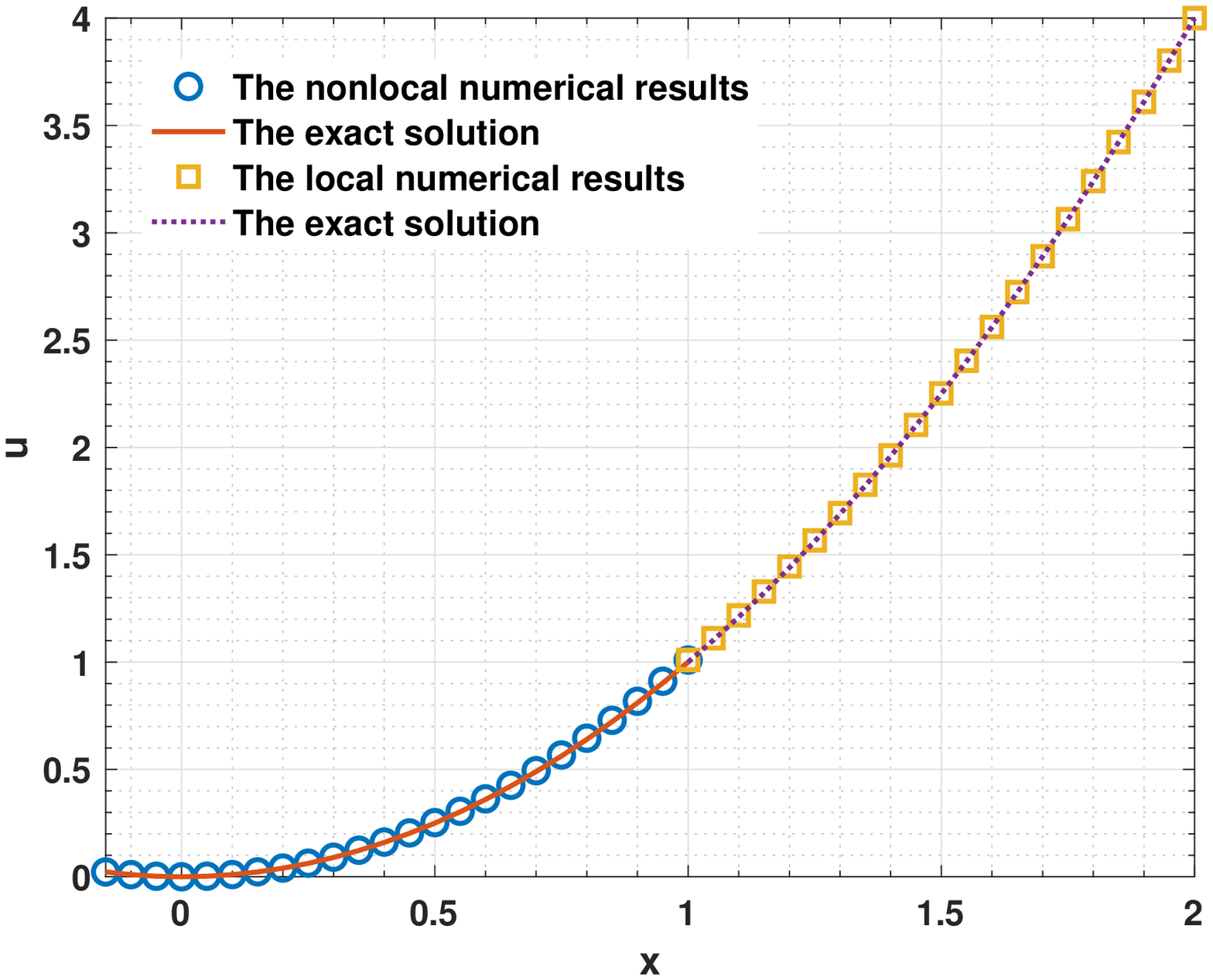}
    \end{minipage}%
    \caption{LtN test 1 results for coupling problem with a straight line interface. Left: linear patch test. The comparison of the numerical results with $h = 1/20$, $\beta = 6$ and the analytical solution $u_{nl,\delta}=u_l = x$. Right: quadratic patch test. The comparison of numerical results with $h=1/20$, $\beta=6$ and the analytical solution $u_{nl,\delta}=u_l = x^2$.}
    \label{fig:linearpatch}
\end{figure}

\begin{table}[ht!]
    \centering
    \begin{tabular}{ccccccccccc}
    \hline
    \multirow{2}{*}{Local Solver} &\multirow{2}{*}{$h$} & \multirow{2}{*}{$\beta$} & \multicolumn{4}{c}{Nonlocal problem} & \multicolumn{4}{c}{Local problem}\\
    \cline{4-11}
    && & $||u_{nl}-u^{ex}||_2$& rate & $||u_{nl}-u^{ex}||_{\infty}$ & rate & $||u_{l}-u^{ex}||_2$ & rate & $||u_{l}-u^{ex}||_{\infty}$ & rate \\
    \hline
    linear FEM &1/10 &3& $6.86 \times 10^{-3}$ & -- & $1.98\times 10^{-2}$
    & -- & $6.18 \times 10^{-3}$ & -- & $1.98 \times 10^{-2}$
    & --\\
    linear FEM &1/20 & 6&$3.17 \times 10^{-3}$ & 1.11 & $9.72 \times 10^{-3}$
    & 1.03 & $3.00 \times 10^{-3}$ & 1.04 & $9.72 \times 10^{-3}$
    & 1.03\\
    linear FEM &1/40 & 12&$1.50 \times 10^{-3}$ & 1.08 & $4.75 \times 10^{-3}$
    & 1.04 & $1.46 \times 10^{-3}$ & 1.03 & $4.75 \times 10^{-3}$
    & 1.04\\
    linear FEM &1/80 & 24 &$7.28 \times 10^{-4}$ & 1.04 & $2.34 \times 10^{-3}$
    & 1.02 & $7.19 \times 10^{-4}$ & 1.02 & $2.34 \times 10^{-3}$
    & 1.02\\
    \hline
    linear FEM &1/10 &7& $6.74 \times 10^{-3}$ & -- & $2.00\times 10^{-2}$
    & -- & $6.22 \times 10^{-3}$ & -- & $2.00 \times 10^{-2}$
    & --\\
    linear FEM &1/20 & 14&$3.10 \times 10^{-3}$ & 1.12 & $9.65 \times 10^{-3}$
    & 1.05 & $3.00 \times 10^{-3}$ & 1.05 & $9.65 \times 10^{-3}$
    & 1.05\\
    linear FEM &1/40 & 28&$1.48 \times 10^{-3}$ & 1.07 & $4.72 \times 10^{-3}$
    & 1.03 & $1.45 \times 10^{-3}$ & 1.05 & $4.72 \times 10^{-3}$
    & 1.03\\
    linear FEM &1/80 & 56 &$7.28 \times 10^{-4}$ & 1.02 & $2.34 \times 10^{-3}$
    & 1.01 & $7.19 \times 10^{-4}$ & 1.04 & $2.34 \times 10^{-3}$
    & 1.01\\    
    
    \hline
    \end{tabular}
    \caption{LtN test 1 quadratic patch test results for coupling problem with a straight line interface, using linear finite element method in the local solver. Here $u^{ex}=x^2$ represents the analytical solution.}
    \label{tab:quapatch}
\end{table}

\begin{table}[ht!]
    \centering
    \begin{tabular}{cccccccc}
    \hline
        \multirow{2}{*}{Local Solver} &\multirow{2}{*}{$\beta$} &
     \multicolumn{3}{c}{Nonlocal problem} & \multicolumn{3}{c}{Local problem}\\
    \cline{3-8}
    && $h_{nl}$ & $||u_{nl}-u^{ex}||_2$&  $||u_{nl}-u^{ex}||_{\infty}$ &$ h_{l}$ & $||u_{l}-u^{ex}||_2$ &  $||u_{l}-u^{ex}||_{\infty}$ \\
    \hline
    quadratic FEM & 6 & 1/20 & $6.17 \times 10^{-15}$ & 
    $2.26 \times 10^{-14}$ & 1/10 &
    $1.25 \times 10^{-14}$ & $2.31 \times 10^{-14}$ \\
    quadratic FEM & 12 & 1/40 & $1.88 \times 10^{-14}$ & 
    $6.44 \times 10^{-14}$ & 1/20 &
    $5.42 \times 10^{-14}$ & $9.59 \times 10^{-14}$ \\
    \hline
    \end{tabular}
    \caption{LtN test 1 quadratic patch test results for coupling problem with a straight line interface, using quadratic finite element method in the local solver. Here we take $\delta/h_{nl} = 3.0$, and $u^{ex}=x^2$ represents the analytical solution.}
    \label{tab:sopatch}
\end{table}

\subsubsection{LtN Test 2: coupling problem with a circular interface}\label{sec:test2couple}

We now consider the coupling LtN problem on a circular interface, with the domain settings illustrated in the left side of Figure \ref{fig:couplingdomain}. The nonlocal subdomain is set as a unit circle $\Omega_{nl}=\{(x,y)|x^2+y^2\leq 1\}$ and the local subdomain is a $4\times 4$ square region surrounding the unit circle. The local-to-nonlocal interface $\Gamma_i = \{(x,y)|x^2+y^2 = 1\}$. With this test, we aim to investigate the performance of the proposed coupling framework on local-to-nonlocal coupling problems with curved interfaces. Note that in the finite element solver generated by FEniCS, the circular interface is approximated by a polygon, which introduces an $\mcO(h)$ discretization error in the coupling framework. However, when $\Delta t=\mcO(h^2)$, this discretization error is in the same order as the optimal splitting error $\Delta t/h=\mcO(h)$, so we therefore expect no deterioration on the convergence rate.

\begin{figure}
    \centering
\includegraphics[width =0.7 \textwidth]{./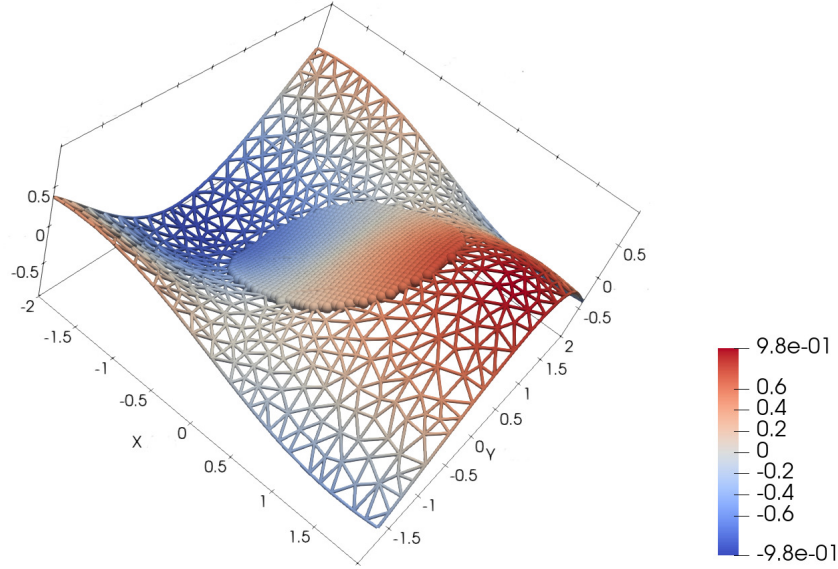}
    \caption{LtN test 2 simulation results for coupling problem with a circular interface when $\alpha_{nl}=\alpha_l=1$. Here the sphere represents the nonlocal solution with the meshfree solver and the triangular mesh represents the local solution obtained via finite element approximations.}
    \label{fig:circlecouple}
\end{figure}

\begin{figure}
    \centering
    \begin{minipage}{0.5\textwidth}
    \includegraphics[width = \textwidth]{./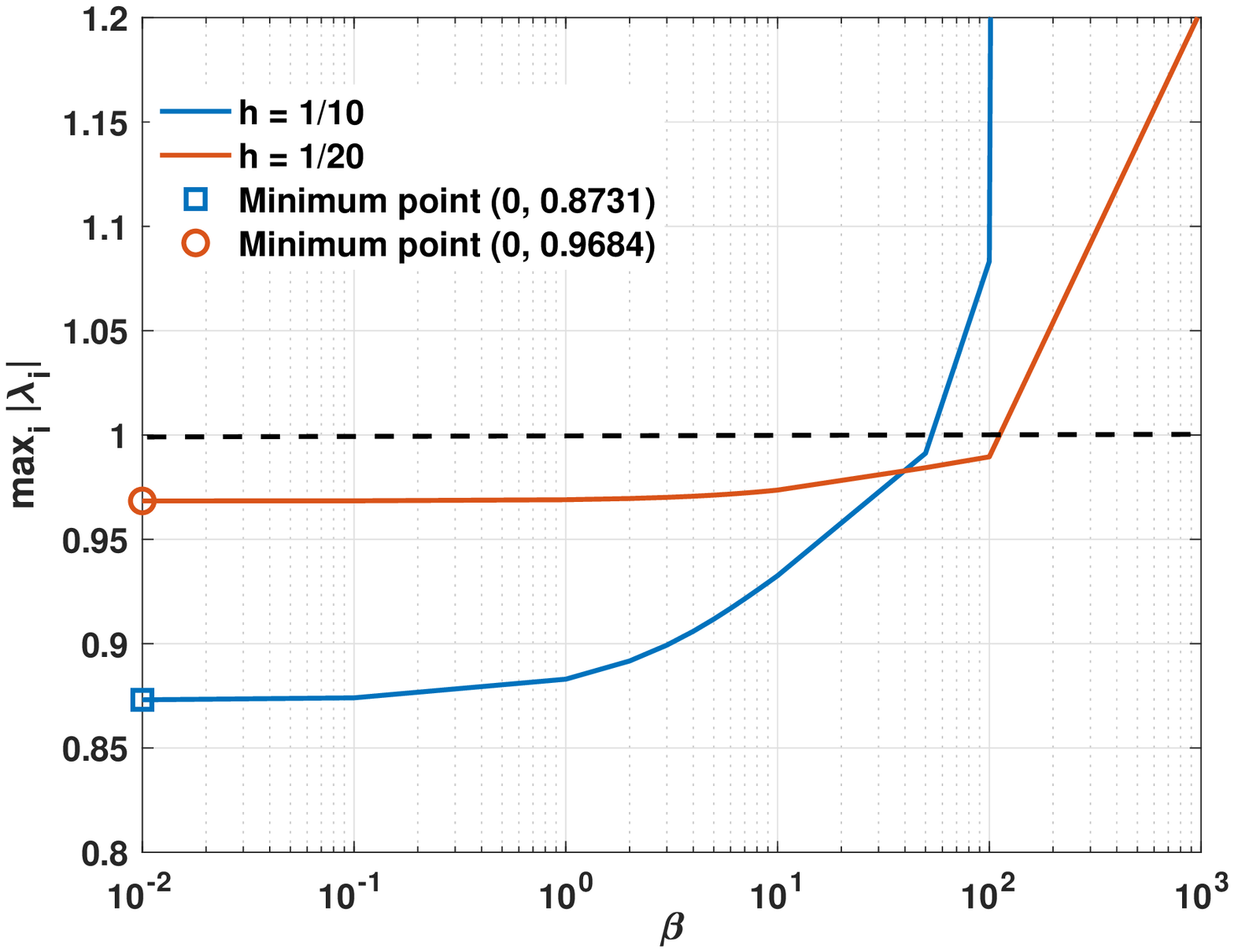}
    \end{minipage}%
    \begin{minipage}{0.5\textwidth}
    \includegraphics[width = \textwidth]{./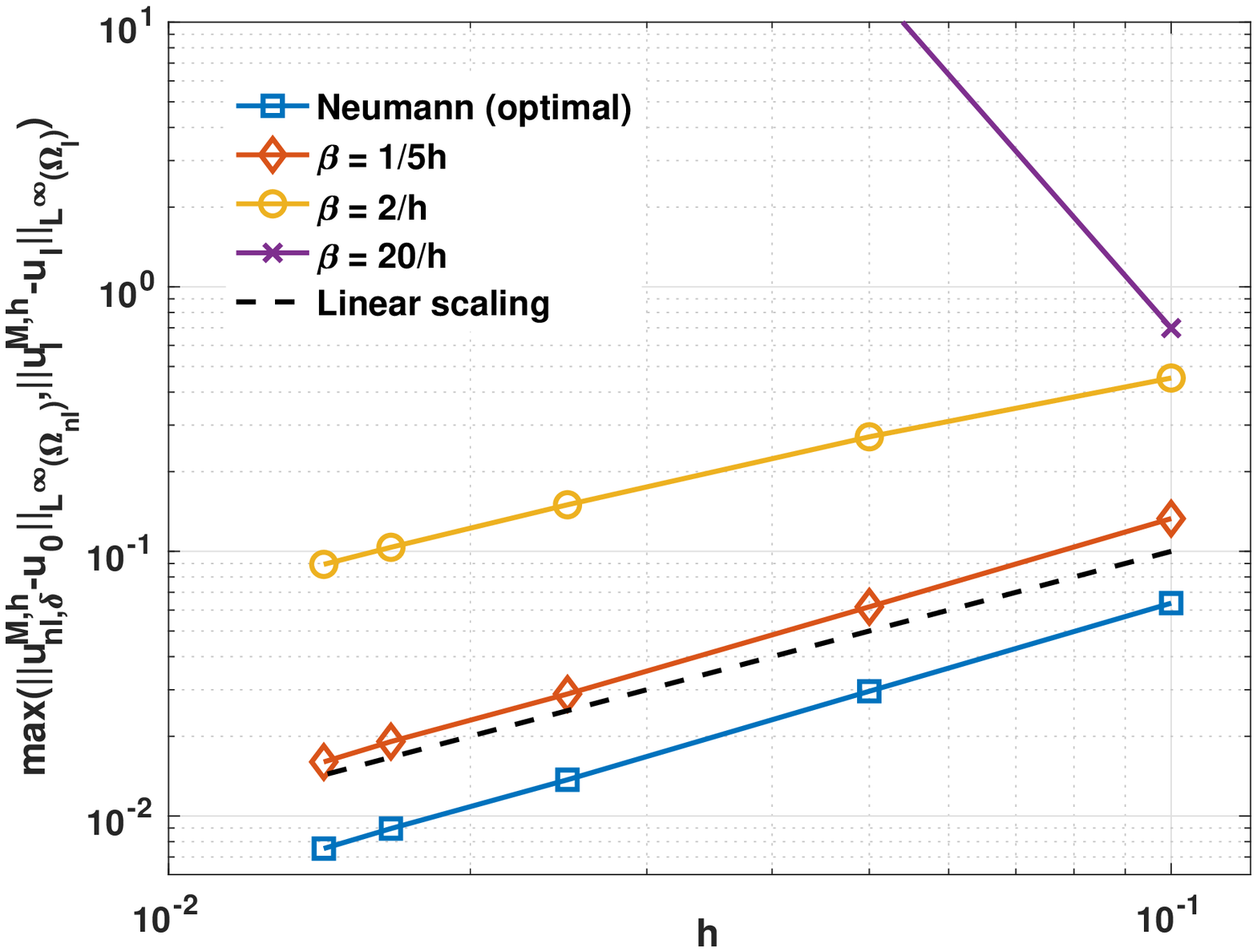}
    \end{minipage}
    \caption{LtN test 2 results for coupling problem with a circular interface when $\alpha_{nl}=\alpha_l=1$. Left: the amplification factor $\underset{i}\max {\lambda_i}$ as a function of Robin coefficient $\beta$ when $h=\{1/10,1/20\}$. Right: convergence of the numertical solution to the local limit with respect to different Robin coefficients, in the $L^{\infty}$ norm.}
    \label{fig:circle_infconv}
\end{figure}

We first study the numerical performance when $\alpha_{nl}=\alpha_l=1$, by employing the same problem setting as in \eqref{eqn:caseAdiri}-\eqref{eqn:caseu0}. Note here since the nonlocal subdomain is fully embedded in the local subdomain, we only need to provide the Dirichlet-type boundary condition on one point for the nonlocal subdomain, to make sure that the nonlocal subproblem is well-defined in the $\beta=0$ case. Specifically, we set $u_{nl,\delta}(x,y,t)=t\sin(x)\cos(y)$ at $(x,y)=(0,-1)$. The simulation results at $T=1$ are plotted in Figure \ref{fig:circlecouple}, where the sphere represents the solution in the nonlocal subdomain and the triangular mesh represents the solution in the local subdomain. To investigate the optimal Robin coefficient, when keeping a fixed ratio $\delta/h = 3.9$ and $\Delta t = 10h^2$ we show the amplification factor $\underset{i}\max |\lambda_i|$ from different Robin coefficients in the left plot of Figure \ref{fig:circle_infconv}. In this case, $\underset{i}\max |\lambda_i|$ achieves the minimum when $\beta=0$, which suggests that the Neumann-type transmission condition is the optimal choice. Moreover, we also notice that comparing with the results in test 1, the curves of $\underset{i}\max |\lambda_i|$ in test 2 show very different trends. In this case, the value of $\underset{i}\max |\lambda_i|$ increases slowly when $\beta\leq\dfrac{1}{h}$, and $\underset{i}\max |\lambda_i|$ becomes larger than $1$ when $\beta\geq \dfrac{5}{h}$. It indicates that the coupling framework performance should not vary much when employing a small $\beta$, and a large $\beta$ is not a preferable choice for this case since the numerical solution may diverge. To numerically verify the amplification factor analysis and to investigate the asymptotic compatibility of the numerical solution, in the right plot of Figure \ref{fig:circle_infconv} we show the convergence of the numerical solution to the analytical local limit in the $L^\infty$ norm at time $T = 1$. Among the 4 different Robin coefficients, the case with Neumann-type transmission condition achieves the optimal $\mcO(h)$ convergence to the local limit, and the case with $\beta=\dfrac{1}{5h}$ also gives a similar convergence rate. When we further increase the Robin coefficient, the convergence rate deteriorates and the coupling framework becomes unstable when $\beta=\dfrac{20}{h}$. These observations are consistent with the amplification factor analysis. The different trends in Figure \ref{fig:line_infconv} and Figure \ref{fig:circle_infconv} also suggest that the optimal Robin coefficient may vary a lot on different domain settings, and therefore a case-by-case analysis of $\beta$ is of critical.

On the coupling problem with circular interface, we now investigate the performance of the non-overlapping coupling framework in handling physical property jumps across the interface. In this test we assume that the two subproblems have dramatically different diffusivities $\alpha_{nl} = 1$ and $\alpha_l = 10$, and consider the following two problem settings:
\begin{itemize}
    \item \textbf{Heterogeneous domain setting A:} 
\begin{align*}
&u^{IC}(x,y)=0, \text{ for } (x,y)\in \Omega_{nl}\cup \Omega_l,\quad u^D_{l}(x,y,t) = t\sin(x)\cos(y), \text{ for } (x,y)\in \Gamma_{d},\\
&f_{nl}(x,y,t) = (1+2t)\sin(x)\cos(y), \text{ for } (x,y) \in \Omega_{nl}, \; f_{l}(x,y,t) = (1+20t)\sin(x)\cos(y), \text{ for } (x,y)\in \Omega_{l},\\
&\lim_{\delta\rightarrow 0} u_{nl,\delta}(x,y,t)=u_{0}(x,y,t)=t\sin(x)\cos(y) \text{ for } (x,y) \in \Omega_{nl}, \quad u_{l}(x,y,t) = t\sin(x)\cos(y), \text{ for } (x,y)\in \Omega_{l}.
\end{align*}
    \item \textbf{Heterogeneous domain setting B:} 
\begin{align*}
&u^{IC}(x,y)=0, \text{ for } (x,y)\in \Omega_{nl}\cup \Omega_l,\quad u^D_{l}(x,y,t) = \dfrac{t((x^2+y^2)^2+1)}{2}, \text{ for } (x,y)\in \Gamma_{d},\\
&f_{nl}(x,y,t) = (x^2+y^2)-4t, \text{ for } (x,y) \in \Omega_{nl}, \; f_{l}(x,y,t) = \dfrac{((x^2+y^2)^2+1)}{2}-80t(x^2+y^2), \text{ for } (x,y)\in \Omega_{l},\\
&\lim_{\delta\rightarrow 0} u_{nl,\delta}(x,y,t)=u_{0}(x,y,t)=t(x^2+y^2) \text{ for } (x,y) \in \Omega_{nl}, \quad u_{l}(x,y,t) = \dfrac{t((x^2+y^2)^2+1)}{2}, \text{ for } (x,y)\in \Omega_{l}.
\end{align*}
\end{itemize}
In both settings we keep a fixed ratio $\delta/h = 3.9$ and take the time step size $\Delta t = 10h^2$. We note that setting A and setting B should have the same optimal Robin coefficient $\beta$. To study this optimal Robin coefficient, in Figure \ref{fig:circle2robin} we plot the amplification factor $\underset{i}\max |\lambda_i|$ as a function of $\beta$ for fixed spatial discretization sizes $h=1/10$ and $h=1/20$, and observe that $\underset{i}\max |\lambda_i|$ achieves its minimum at $\beta=0$, i.e., when the Neumann-type transmission condition is employed. To numerically verify this observation and to study the asymptotic convergence of the analytical solution, in Figure \ref{fig:circle2infconv} the convergence results of numerical solution to the analytical local limit at time $T =1$ are plotted versus decreasing $h$ for both problem setting A (in the left plot) and problem setting B (in the right plot). In the case with problem setting A, the fastest convergence $\mcO(h)=\mcO(\delta)$ is achieved when employing the Neumann-type transmission condition. In the case with problem setting B, the results from $\beta=\dfrac{2}{10h}$ has the smallest difference to the local limit, while tests with $\beta=0$ and $\beta=\dfrac{2}{10h}$ achieve almost the same asymptotic convergence rates to the local limit. In both cases, the numerical solution diverges when employing large $\beta$, which is consistent with the amplification factor analysis. Therefore, in the test with non-zero interface curvature and heterogeneous diffusivities, the amplification factor analysis also provide a good guidance for the optimal Robin coefficient, and the coupling framework employing optimal Robin coefficient is asymptotically compatible.

\begin{figure}[ht!]
    \centering
    \begin{minipage}{0.5\textwidth}
    \includegraphics[width = \textwidth]{./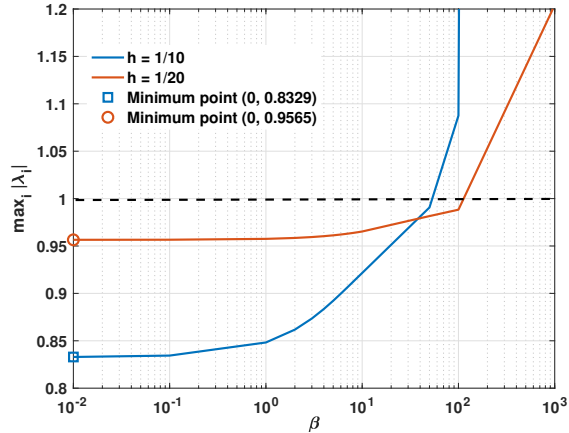}
    \end{minipage}
    \caption{LtN test 2 with the heterogeneous setting ($\alpha_{nl}=1$, $\alpha_l=10$): the amplification factor $\underset{i}\max  |\lambda_i|$ as a function of Robin coefficient $\beta$ when $h = \{1/10, 1/20\}$.}
    \label{fig:circle2robin}
\end{figure}

\begin{figure}[ht!]
    \centering
    \begin{minipage}{0.5\textwidth}
    \includegraphics[width = \textwidth]{./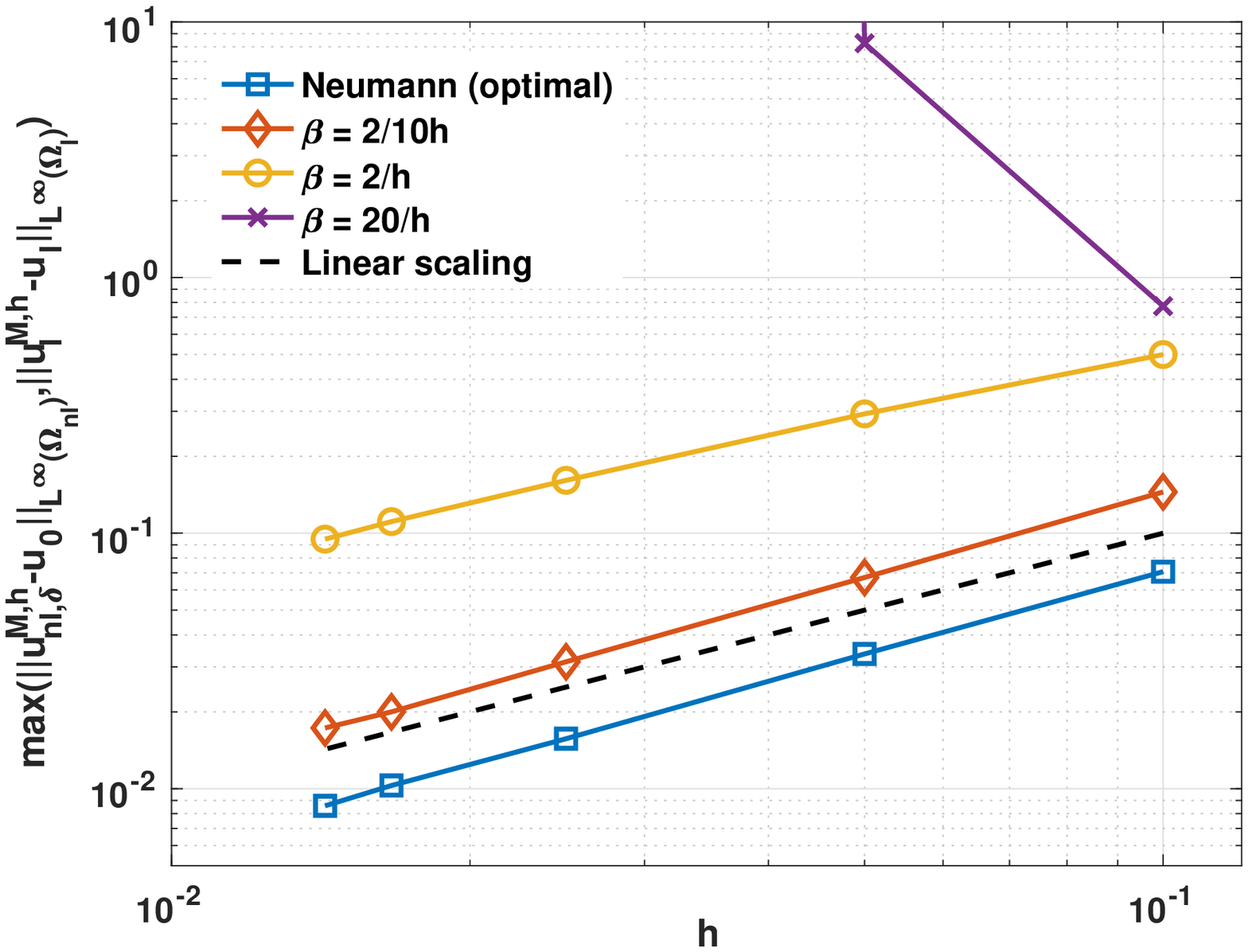}
    \end{minipage}%
    \begin{minipage}{0.5\textwidth}
    \includegraphics[width = \textwidth]{./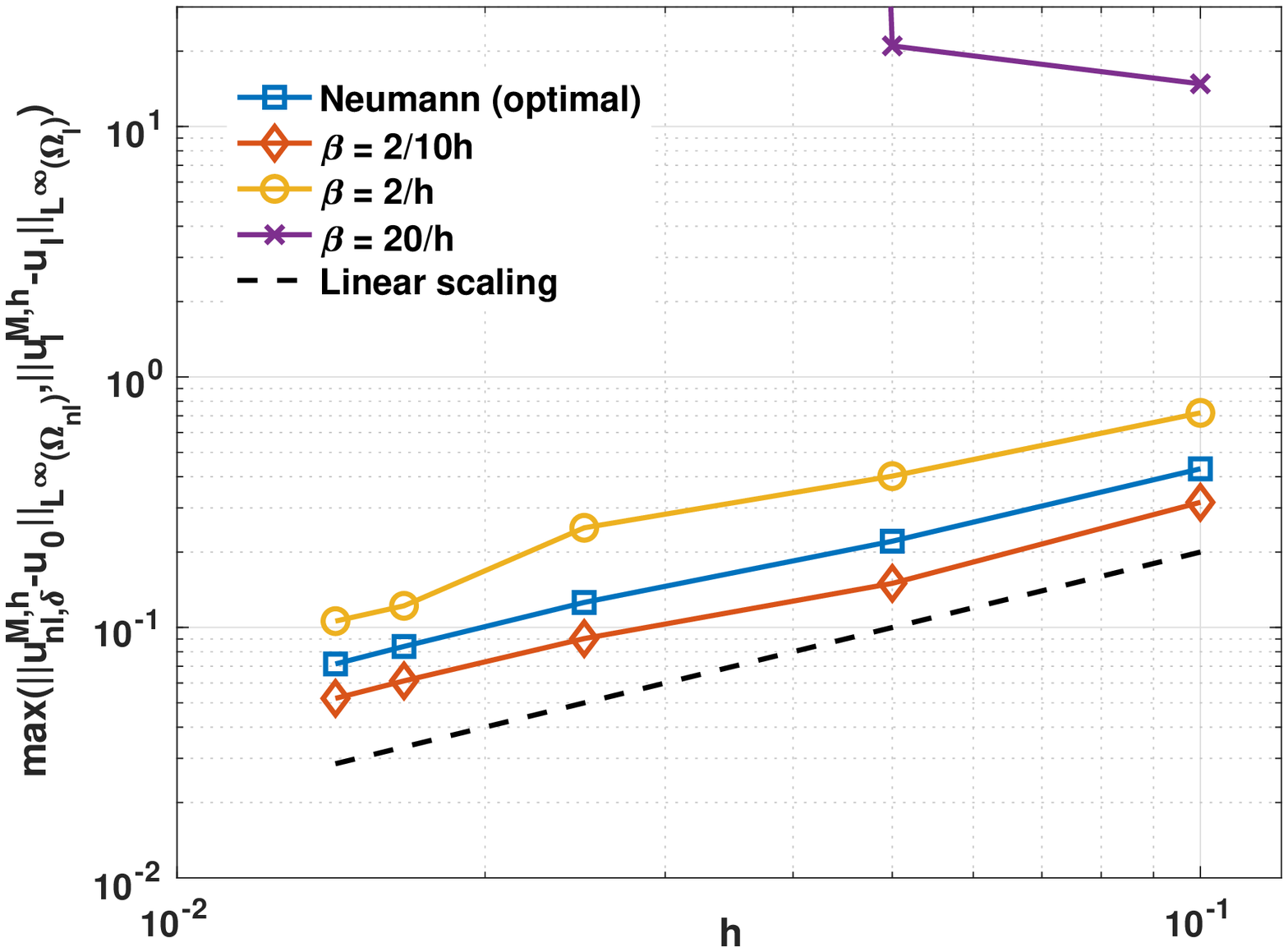}
    \end{minipage}%
    \caption{LtN test 2 results for convergence of the numerical solution to the local limit with different Robin coefficients, on heterogeneous domain setting ($\alpha_{nl}=1$, $\alpha_l=10$). Left: convergence in the $L^\infty$ norm with problem setting A. Right: convergence in the $L^\infty$ norm with problem setting B.}
    \label{fig:circle2infconv}
\end{figure}

\subsubsection{LtN Test 3: coupling problem with a cross-shape interface}\label{sec:test3couple}

\begin{figure}
    \centering
\includegraphics[width =0.7 \textwidth]{./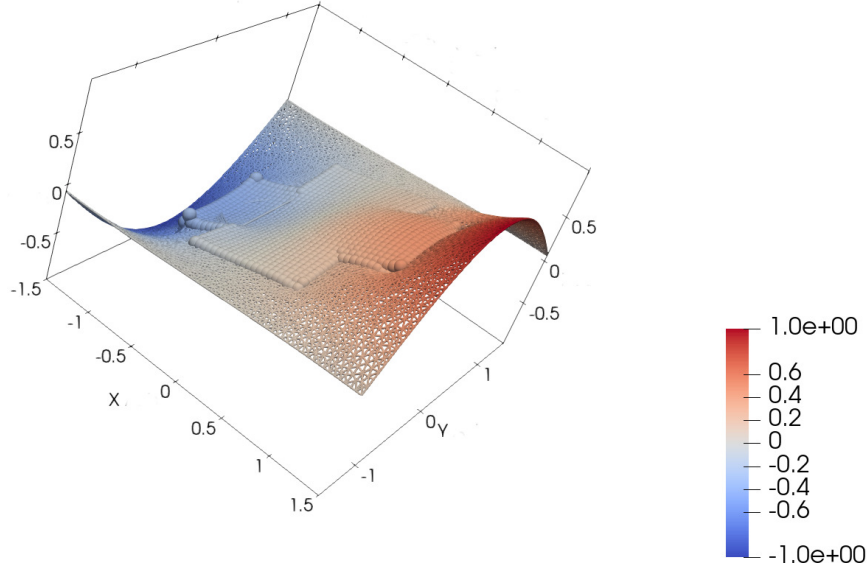}
    \caption{LtN test 3 simulation results for coupling problem with a cross-shape interface when $\alpha_{nl}=\alpha_l=1$. Here the sphere represents the nonlocal solution with the meshfree solver and the triangular mesh represents the local solution obtained via finite element approximations.}
    \label{fig:crosscouple}
\end{figure}

Having demonstrated the asymptotic convergence and the optimal coupling strategy for problems with smooth interfaces, we now apply our approach
to a problem with complicated domain settings, as illustrated in the right plot of Figure \ref{fig:couplingdomain}. The nonlocal subdomain is set as a cross-shape region which is not convex, and the local subdomain is a square region surrounding the nonlocal subdomain. With this test, we aim to investigate the performance of the coupling framework on non-trivial domain settings where the coupling interface is non-smooth and includes corners.

We first study the numerical performance when $\alpha_{nl}=\alpha_l=1$. In this test, the initial temperature $u^{IC}=0$ in the whole domain. In the nonlocal exterior boundary $B\Omega_d$ and the local exterior boundary $\Gamma_d$, we set prescribed Dirichlet boundary conditions as
\begin{equation}\label{eqn:caseAdirit2}
u^{D}_{nl}(x,y,t)=t^2\sin(x)\cos(y),\quad u^{D}_{l}(x,y,t)=t^2\sin(x)\cos(y).
\end{equation}
The external loadings are set as
\begin{equation}\label{eqn:caseAft2}
f_{nl}(x,y,t)=(2t+2t^2)\sin(x)\cos(y), \text{ for } (x,y)\in \Omega_{nl},\quad f_l(x,y,t)=(2t+2t^2)\sin(x)\cos(y), \text{ for }(x,y)\in \Omega_{l}.
\end{equation}
This problem has the following analytical limits:
\begin{equation}\label{eqn:caseult2}
\lim_{\delta\rightarrow 0} u_{nl,\delta}(x,y,t)=u_0(x,y,t)=t\sin(x)\cos(y),\;(x,y)\in \Omega_{nl},\quad u_l(x,y,t) = t^2\sin(x)\cos(y),\; (x,y)\in \Omega_{l}.
\end{equation}
The simulation results at $T=1$ are plotted in Figure \ref{fig:crosscouple}, and the results on amplification factor and convergence to the local limits are demonstrated in Figure \ref{fig:cross_infconv}. In all tests we set $\delta/h=3.5$ and $\Delta t=100h^2$. In the left plot of Figure \ref{fig:cross_infconv} we investigate the optimal Robin coefficient $\beta$ by plotting $\underset{i}\max |\lambda_i|$ as a function of $\beta$ for two different spatial discretization length scales. It can be observed that the minimum value of $\underset{i}\max |\lambda_i|$ occurs at $\beta=\dfrac{1}{5h}$. Moreover, $\underset{i}\max |\lambda_i|>1$ when $\beta=0$ or $\beta>\dfrac{10}{h}$, which suggests the possible deteriorating convergence rate or ever divergence of the numerical solution. In the right plot of Figure \ref{fig:cross_infconv}, we show the $L^{\infty}$ norm of the difference between the numerical results and the analytical local limit at time $T = 1$ for four different values of $\beta$: $0$, $\dfrac{1}{5h}$, $\dfrac{1}{h}$ and $\dfrac{5}{h}$. The numerical results illustrate that when taking $\beta=0$, the coupling framework is unstable. Moreover, when employing the optimal Robin coefficient $\beta=\dfrac{1}{5h}$, the numerical solution has the fastest convergence rate $\mcO(h)=\mcO(\delta)$. Both findings are consistent with the observation from the amplification factor analysis. The above numerical results indicate that the amplification factor analysis helps predicting the optimal Robin coefficient for problems with non-smooth interfaces, and the optimal asymptotic convergence rate $\mcO(h)=\mcO(\delta)$ is achieved.

\begin{figure}
    \centering
    \begin{minipage}{0.5\textwidth}
    \includegraphics[width = \textwidth]{./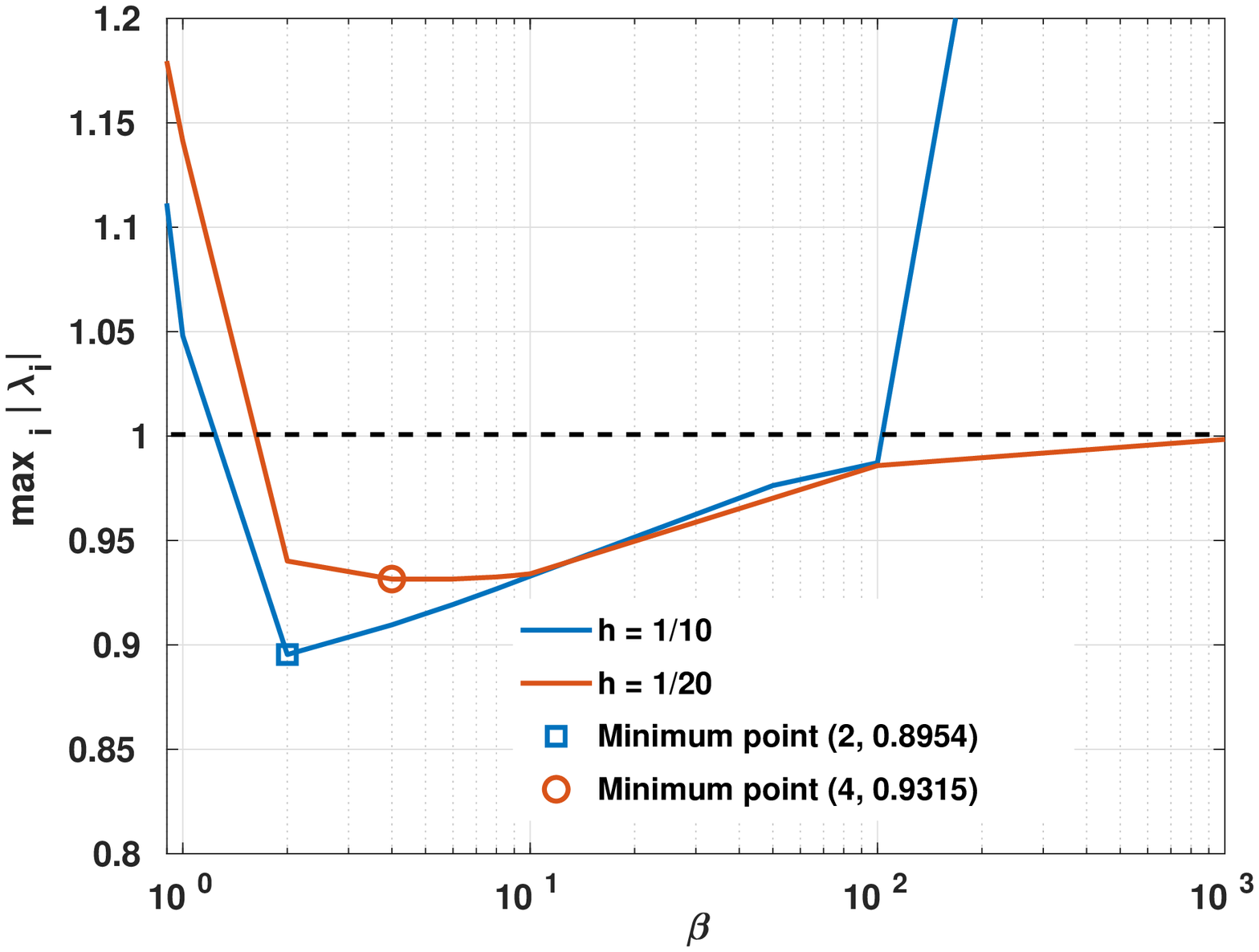}
    \end{minipage}%
    \begin{minipage}{0.5\textwidth}
    \includegraphics[width = \textwidth]{./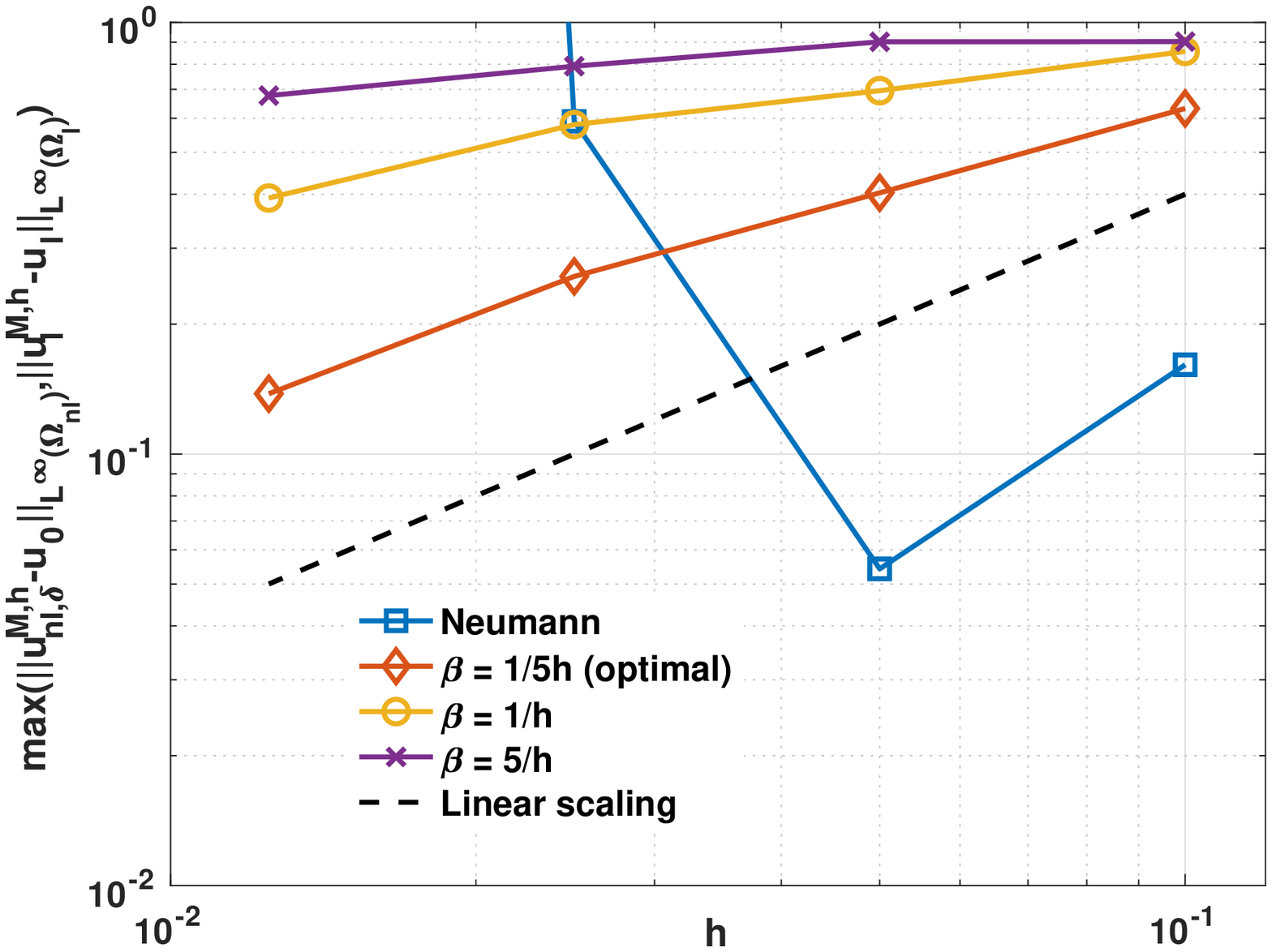}
    \end{minipage}
    \caption{LtN test 3 for the coupling problem with a cross-shape interface when $\alpha_{nl}=\alpha_l=1$. Left: the amplification factor $\underset{i}\max  |\lambda_i|$ as a function of Robin coefficient $\beta$ when $h=\{1/10,1/20\}$. Right: convergence of the numerical solution to the local limit with different Robin coefficients, in the $L^{\infty}$ norm.}
    \label{fig:cross_infconv}
\end{figure}

Lastly, we investigate the performance of the non-overlapping coupling framework in handling heterogeneous domains, by taking $\alpha_{nl}=1$ and $\alpha_l=0.1$ and considering the problem setting as follows: 
\begin{align*}
&u^{IC}(x,y)=0, \text{ for } (x,y)\in \Omega_{nl}\cup \Omega_l,\\
&u^D_{nl,\delta}(x,y,t) = t^2\sin(x)\cos(y), \text{ for } (x,y)\in \Gamma_{d}, \quad u^D_{l}(x,y,t) = t^2\sin(x)\cos(y), \text{ for } (x,y)\in \Gamma_{d},\\
&f_{nl}(x,y,t) = 2(t+t^2)\sin(x)\cos(y), \text{ for } (x,y) \in \Omega_{nl}, \; f_{l}(x,y,t) = 0.2(t+t^2)\sin(x)\cos(y), \text{ for } (x,y)\in \Omega_{l},\\
&\lim_{\delta\rightarrow 0} u_{nl,\delta}(x,y,t)=u_{0}(x,y,t)=t^2\sin(x)\cos(y) \text{ for } (x,y) \in \Omega_{nl}, \quad u_{l}(x,y,t) = t^2\sin(x)\cos(y), \text{ for } (x,y)\in \Omega_{l}.
\end{align*}
We keep a fixed ratio $\delta/h = 3.5$ and take the time step size $\Delta t = 100h^2$, then investigate the optimal Robin coefficient and the aysmptotic convergence performance of the coupling framework. In the left plot of Figure \ref{fig:crosstwoinf}, we plot the amplification factor $\underset{i}\max |\lambda_i|$ as a function of $\beta$ for $h=1/10$ and $h=1/20$. It is observed that the minimum of $\underset{i}\max |\lambda_i|$ occurs at $\beta=\dfrac{1}{5h}$. 
In the right plot of Figure \ref{fig:crosstwoinf}, the convergence of numerical solution to the local limit at $T=1$ are demonstrated for various values of Robin coefficients: $\beta=0$, $\dfrac{1}{5h}$, $\dfrac{1}{h}$ and $\dfrac{10}{h}$. Besides verifying the optimal $\mcO(h)=\mcO(\delta)$ convergence rate when taking the optimal Robin coefficient $\beta=\dfrac{1}{5h}$, the numerical results also demonstrates the importance of picking the optimal $\beta$: when taking other values of $\beta$, much slower numerical convergence or even divergence are observed.

\begin{figure}
    \centering
    \begin{minipage}{0.5\textwidth}
    \includegraphics[width = \textwidth]{./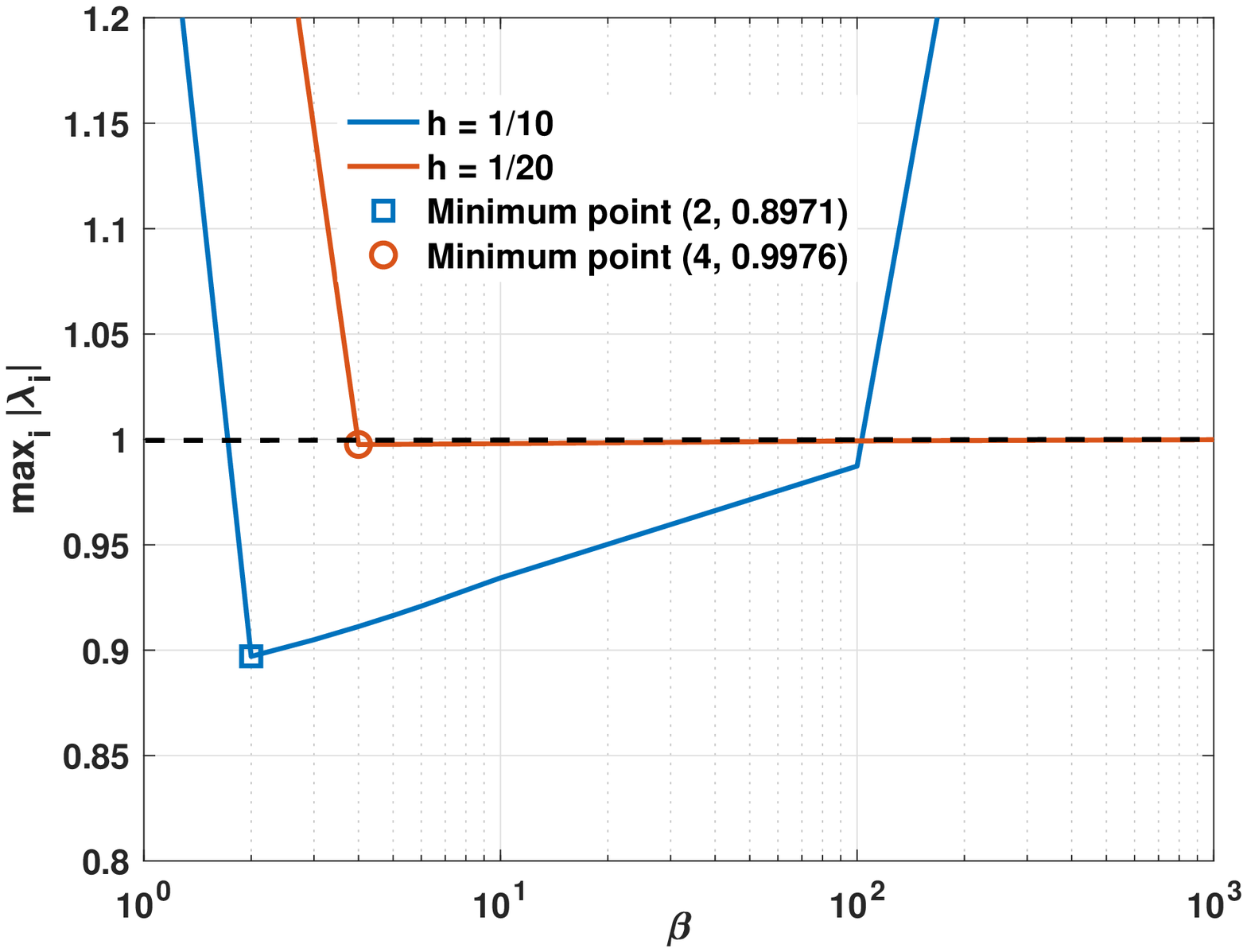}
    \end{minipage}%
    \begin{minipage}{0.5\textwidth}
    \includegraphics[width = \textwidth]{./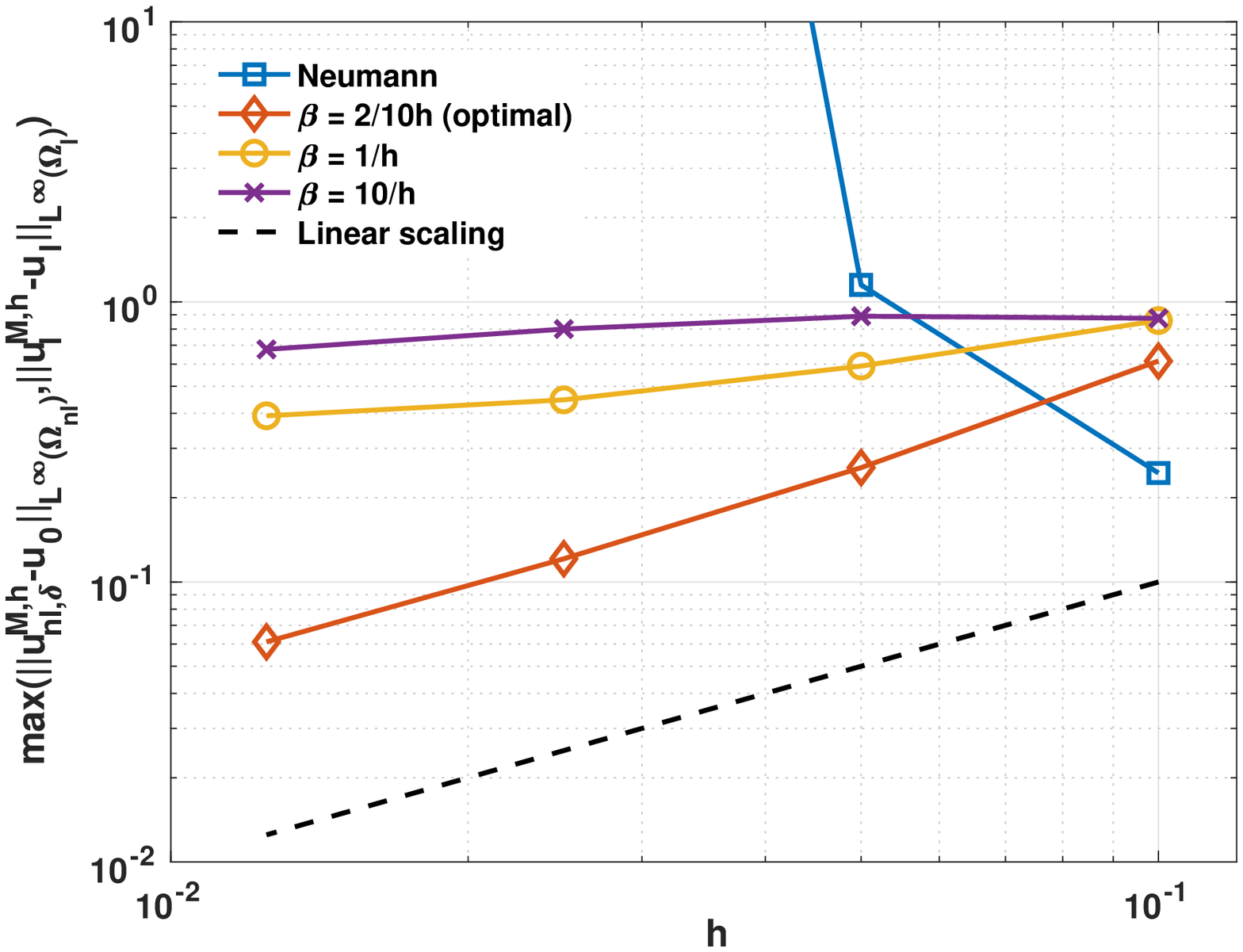}
    \end{minipage}
    \caption{LtN test 3 with the heterogeneous setting ($\alpha_{nl}=1$, $\alpha_l=0.1$). Left: the amplification factor $\underset{i}\max  |\lambda_i|$ as a function of Robin coefficient $\beta$ when $h=\{1/10,1/20\}$. Right: convergence of the numerical solution to the local limit with different Robin coefficients, in the $L^{\infty}$ norm.}
    \label{fig:crosstwoinf}
\end{figure}

\section{Conclusion}\label{sec:conclusion}

Developing a efficient numerical approach for dynamic local-to-nonlocal (LtN) coupling problem with a non-overlapping domain setting is generally challenging due to both modeling and numerical difficulties. From the modeling aspect, since there is no overlapping region between the two subdomains, the prescription of nonlocal transmission conditions, or volume constraints, becomes non-trivial. From the numerical aspect, when employing the partitioned procedure in LtN coupling problems, one not only has to resolve the numerical stability issue as in the classical domain-decomposition problems, but also has to face the challenge of preserving the asymptotic compatibility.

In this work we have developed an explicit coupling strategy to couple the local and nonlocal heat equations without overlapping regions, based on a new nonlocal Robin-type transmission condition. A meshfree discretization method based on the generalized moving least squares (GMLS) approximation is used to solve for the nonlocal heat equation in the nonlocal subdomain, and a first order finite element method is employed for the classical heat equation in the local subdomain. The coupling framework is based on the partitioned procedure such that the local and nonlocal solvers communicate by exchanging interface conditions, which enables a modular software
implementation and the solvers can be treated as black boxes. To resolve the challenge of applying the transmission condition in the nonlocal solver, we have introduced a new nonlocal Neumann-type constraint for the $2D$ nonlocal heat equation which is an analogue to the local flux boundary condition. We have theoretically proved that the proposed nonlocal Neumann-type constraint problem converges with the optimal second-order convergence rate $\mcO(\delta^2)$ to the local limit in the $L^{\infty}(\Omega_{nl})$ norm, and extended this constraint formulation to propose a Robin-type transmission condition. The Neumann and Robin-type formulations are applied on a collar layer inside the domain and therefore require no extrapolation outside the problem domain, which enables the possibility of applying the transmission conditions without overlapping regions. To resolve the numerical challenges in explicit coupling strategy, we provided a numerical approach based on amplification factor analysis to obtain the optimal Robin coefficient. With numerical examples on domains with representative geometries and boundary curvatures, we have verified the robustness and the asymptotic compatibility of both the coupling formulation and the Robin coefficient analysis. Specifically, when employing the optimal Robin coefficient from amplification factor analysis, the optimal $L^\infty$ convergence rate $\mcO(\delta)=\mcO(h)$ to the local limit is observed from the numerical results in all instances. 

We note that the formulation described in this paper actually provides an approach for applying the Robin-type boundary condition on general compactly supported nonlocal integro-differential equations (IDEs) with radial kernels. Moreover, the coupling framework provides a general coupling strategy for heterogeneous systems, such as multiscale and multiphysics problems. As a natural extension, we are working on the development of local-to-nonlocal coupling framework for mechanical problems with multiphysics, such as the coupling approaches for the incompressible peridynamic model and the surrounding fluid, to study the damage induced by variable amplitude environmental loading.


\section*{Acknowledgements}
H. You and Y. Yu were supported by the National Science Foundation under award DMS 1620434. Y. Yu was also partially supported by the Lehigh faculty research grant. D. Kamensky was supported by start-up funds from the University of California, San Diego.

\bibliographystyle{elsarticle-num}
\bibliography{LtNheat}

\end{document}